\numberwithin{equation}{section}
\newtheorem{theorem}{Theorem}[section]
\newtheorem{claim}[theorem]{Claim}
\newtheorem{lemma}[theorem]{Lemma}
\newtheorem{proposition}[theorem]{Proposition}
\newtheorem{corollary}[theorem]{Corollary}
\newtheorem{remark}[theorem]{Remark}
\newtheorem{definition}[theorem]{Definition}
\DeclareMathOperator{\Hess}{Hess}
\long\def\xcom#1{}
\newcommand{\cA}{{\ensuremath{\mathcal A}} }
\newcommand{\cB}{{\ensuremath{\mathcal B}} }
\newcommand{\cC}{{\ensuremath{\mathcal C}} }
\newcommand{\cD}{{\ensuremath{\mathcal D}} }
\newcommand{\cE}{{\ensuremath{\mathcal E}} }
\newcommand{\cL}{{\ensuremath{\mathcal L}} }
\newcommand{\cN}{{\ensuremath{\mathcal N}} }
\newcommand{\cP}{{\ensuremath{\mathcal P}} }
\newcommand{\cR}{{\ensuremath{\mathcal R}} }
\newcommand{\cT}{{\ensuremath{\mathcal T}} }
\newcommand{\cU}{{\ensuremath{\mathcal U}} }
\newcommand{\cV}{{\ensuremath{\mathcal V}} }
\newcommand{\gep}{\varepsilon}       
\renewcommand{\tilde}{\widetilde}          
\DeclareMathSymbol{\leqslant}{\mathalpha}{AMSa}{"36} 
\DeclareMathSymbol{\geqslant}{\mathalpha}{AMSa}{"3E} 
\DeclareMathSymbol{\eset}{\mathalpha}{AMSb}{"3F}     
\newcommand{\dd}{\text{\rm d}}             
\newcommand{\R}{\mathbb{R}}
\newcommand{\Z}{\mathbb{Z}}
\newcommand{\N}{\mathbb{N}}
\def\bs{\boldsymbol}
\newcommand\bP{\ensuremath{\bs{\mathrm{P}}}}
\newcommand\bE{\ensuremath{\bs{\mathrm{E}}}}
\newcommand{\ind}{{\sf 1}}
\renewcommand{\epsilon}{\varepsilon}
\renewcommand{\theta}{\vartheta}
\renewcommand{\phi}{\varphi}
 \newcommand{\be}[1]{\begin{equation}\label{#1}}
 \newcommand{\ee}{\end{equation}}
 \newcommand{\bl}[1]{\begin{lemma}\label{#1}}
 \newcommand{\el}{\end{lemma}}
 \newcommand{\br}[1]{\begin{remark}\label{#1}}
 \newcommand{\er}{\end{remark}}
 \newcommand{\bt}[1]{\begin{theorem}\label{#1}}
 \newcommand{\et}{\end{theorem}}
 \newcommand{\bd}[1]{\begin{definition}\label{#1}}
 \newcommand{\ed}{\end{definition}}
 \newcommand{\bcl}[1]{\begin{claim}\label{#1}}
 \newcommand{\ecl}{\end{claim}}
 \newcommand{\bp}[1]{\begin{proposition}\label{#1}}
 \newcommand{\ep}{\end{proposition}}
 \newcommand{\bc}[1]{\begin{corollary}\label{#1}}
 \newcommand{\ec}{\end{corollary}}
 \newcommand{\bpr}{\begin{proof}}
 \newcommand{\epr}{\end{proof}}
 \newcommand{\bi}{\begin{itemize}}
 \newcommand{\ei}{\end{itemize}}
\newcommand{\esp}[1]{\mathbb{E}\etc{#1}}
\newcommand{\ens}[1]{\left\{#1\right\}}
\newcommand{\prob}[1]{\mathbb{P}\etp{#1}}
\newcommand{\etc}[1]{\left [#1 \right ]}
\newcommand{\etp}[1]{\left (#1 \right )}
\newcommand{\valabs}[1]{\left|#1 \right|}
\newcommand{\norme}[1]{\left\Vert#1 \right\Vert}
\newcommand{\cvloi}[1]{\xrightarrow{d}{#1}}
\newcommand{\unsur}[1]{\frac{1}{#1}}
\newcommand{\floor}[1]{\lfloor#1\rfloor}
\newcommand{\espbeta}[1]{\bE_\beta\etc{#1}}
\newcommand{\probbeta}[1]{\bP_\beta\etp{#1}}
\newcommand{\probmubeta}[1]{\bP_{\beta,\mu_\beta}\etp{#1}}
\newcommand{\probbetax}[1]{\bP_{\beta,x}\etp{#1}}
\newcommand{\Nu}{\mathfrak{N}}
\newcommand{\Ju}{\mathfrak{J}}
\newcommand{\PPbeta}{\mathfrak{P}_\beta}
\newcommand{\EEbeta}{\mathfrak{E}_\beta}
\newcommand{\fnt}{{\floor{N\bar{t}}}}
\newcommand{\fns}{{\floor{N\bar{s}}}}
\newcommand{\Pnhn}{P_{N,H_N^{q}}}
\newcommand{\Enhn}{E_{N,H_N^{q}}}
\newcommand{\probnhn}[1]{\Pnhn\etp{#1}}
\newcommand{\espnhn}[1]{\Enhn\etp{#1}}
\newcommand{\blbln}{\bar{L}_{\bar{\Lambda}_N}}
\newcommand{\undemi}{\frac12}
\newcommand{\un}[1]{1_{\etp{#1}}}
\newcommand{\Ll}{\mathfrak{L}}
\newcommand{\Llam}{\Ll_\Lambda}
\newcommand{\Llamn}{\Ll_{\Lambda_n}}
\newcommand{\taufrak}{\mathfrak{T}}
\author{Philippe Carmona}
\address{Laboratoire de Math\'ematiques Jean Leray UMR 6629,
Universit\'e de Nantes, 2 Rue de la Houssini\`ere,
BP 92208, F-44322 Nantes Cedex 03, France}
\email{philippe.carmona@univ-nantes.fr}
\author{Nicolas P\'etr\'elis}
\address{Laboratoire de Math\'ematiques Jean Leray UMR 6629,
Universit\'e de Nantes, 2 Rue de la Houssini\`ere,
BP 92208, F-44322 Nantes Cedex 03, France}
\email{nicolas.petrelis@univ-nantes.fr}
\keywords{Polymer collapse, phase transition, brownian area, large deviation}
\subjclass[2010]{60K35, 82B26, 82B41, 60F10}
\title[IPDSAW : scaling limits]
{Interacting partially directed self avoiding walk : Scaling Limits.}
\date{\today}
\begin{document}

\begin{abstract}
This paper is dedicated to the investigation of a $1+1$ dimensional
self-interacting and partially directed self-avoiding walk, usually
referred to by the acronym IPDSAW and  introduced in \cite{ZL68} by Zwanzig and Lauritzen to study the 
collapse transition of an homopolymer dipped in a poor solvant. The intensity  of the interaction between monomers is denoted by $\beta\in(0,\infty)$ and there exists a critical 
threshold  $\beta_c$ which determines the three regimes displayed by the polymer, i.e., \emph{extended} for $\beta<\beta_c$, \emph{critical} for $\beta=\beta_c$ and 
\emph{collapsed} for $\beta>\beta_c$.  

In  \cite{POBG93}, physicists displayed some numerical results concerning the typical growth rate of some geometric features of the path as its length  $L$ diverges. 
From this perspective the quantities of interest are the projection of the path onto the horizontal axis (also called horizontal extension) and the projection of the path onto 
the vertical axis for which it is useful to define the lower and the upper envelopes of the path.
With the help of a new random walk representation, we proved in  \cite{CNGP13} that the 
path grows horizontally like $\sqrt{L}$  in its collapsed regime  and that, once rescaled by $\sqrt{L}$ vertically and horizontally, its upper and lower envelopes converge 
towards some deterministic Wulff shapes. 

In the present paper, we bring the geometric investigation of the path several steps further. In the extended regime,  we prove a law of large number for the horizontal extension of the polymer rescaled by its total length $L$, we provide a precise asymptotics of the partition function and we show that  its lower and upper envelopes, once rescaled in time by $L$ and in space by $\sqrt{L}$,  converge towards the same Brownian motion.
In the critical regime we identify the limiting distribution of the horizontal extension rescaled by $L^{2/3}$ and we 
show that the excess  partition function decays as $L^{2/3}$ with an explicit prefactor.  In the collapsed regime,  we identify the joint limiting distribution of the fluctuations of the  upper and lower envelopes around their associated limiting Wulff shapes,  rescaled in time by $\sqrt{L}$ and in space by $L^{1/4}$.



\end{abstract}
\maketitle


\section{Introduction}
In this paper we consider a model of statistical mechanics introduced in \cite{ZL68} by Zwanzig and Lauritzen and refered to as
interacting partially directed self avoiding walk (IPDSAW). The model is a $(1+1)$-dimensional partially directed version of the interacting self-avoiding walk (ISAW)
introduced by Flory in \cite{Flory} as a model for an homopolymer in a poor solvent.

The aim of our paper  is to pursue the investigation of the IPDSAW initiated in \cite{NGP13} and \cite{CNGP13} and in particular to 
display the infinite volume limit of some features of the model when
the size of the system diverges for each of the  three regimes:     
collapsed, critical and extended.  The first object to be considered is the horizontal extension of the path.
Then, we will consider the whole path, properly rescaled and look at
its infinite volume limit in the extended phase and  in the collapsed
phase.

Let us point  that numerical simulations are difficult~\cite{POBG93} and have not led to theoretical results 
about the path properties of the polymer in the three regimes that we
establish in this paper.

\subsection{Model}
The model can be defined in a simple manner. An allowed configuration for the polymer is given by a family of oriented vertical stretches. To be more specific,  for a polymer made of $L\in \N$ monomers, the possible configurations are gathered in $\Omega_L:=\bigcup_{N=1}^L\mathcal{L}_{N,L}$, where $\mathcal{L}_{N,L}$ is the set 
consisting of all families made of  $N$ vertical stretches that have a total length $L-N$, that is
\begin{equation}\label{defLL}
\textstyle\mathcal{L}_{N,L}=\Bigl\{l\in\mathbb{Z}^N:\sum_{n=1}^N|l_n|+N=L\Bigr\}.
\end{equation}
Note that with such configurations, the modulus of a given stretch
corresponds to the number of monomers constituting this stretch (and
the sign gives the direction upwards or downwards). Moreover, any two consecutive vertical
stretches are separated by a monomer placed horizontally and this explains why $\sum_{n=1}^N |l_n|$ must equal $L-N$ in order for $l=(l_i)_{i=1}^N$ to be associated with a polymer made of $L$ monomers (see Fig. \ref{fig:stretches}).


\begin{figure}
        \centering
        \begin{subfigure}[b]{0.3\textwidth}
                \includegraphics[width=\textwidth]{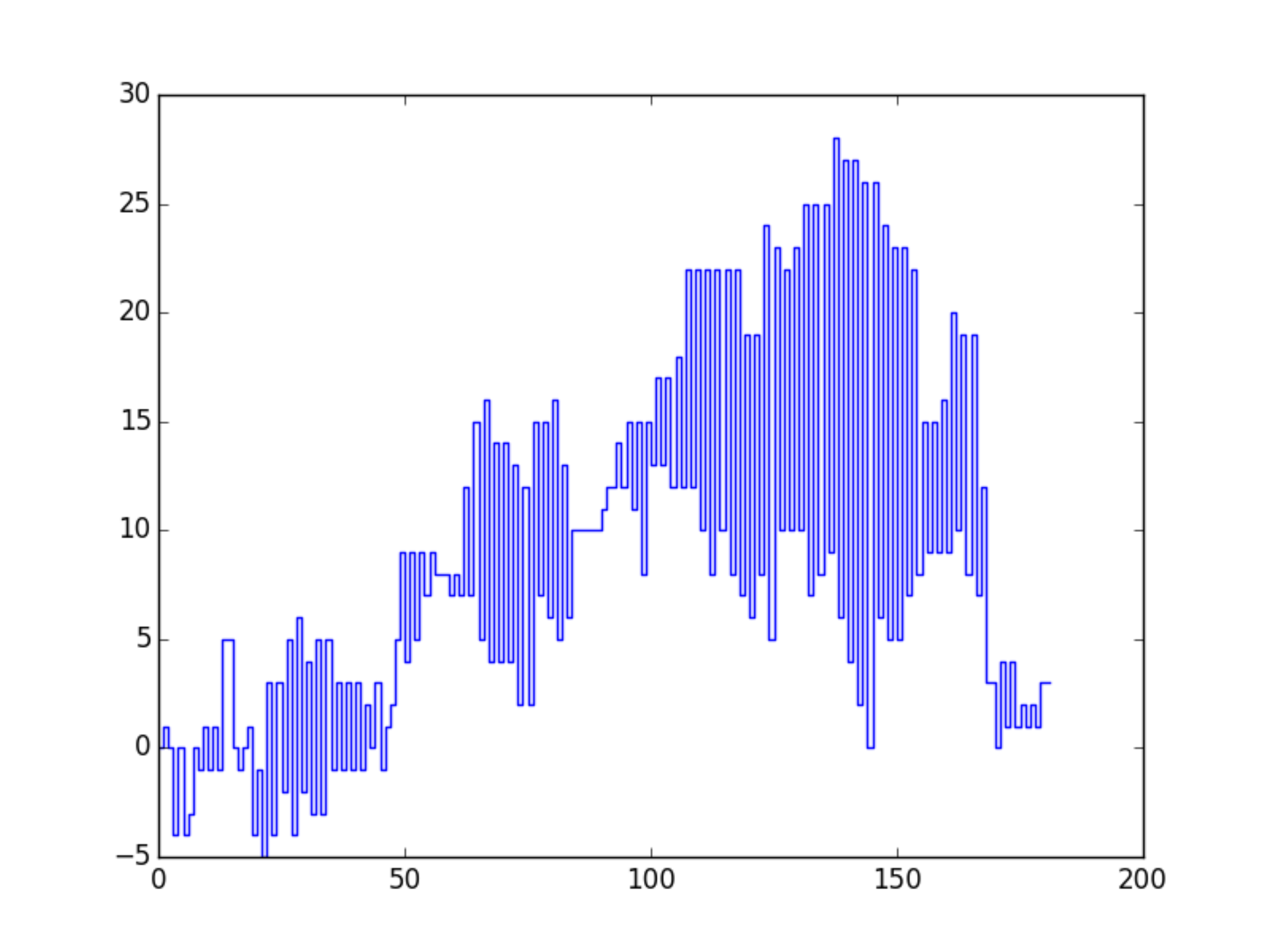}
        \end{subfigure}%
	\begin{subfigure}[b]{0.3\textwidth}
\includegraphics[width=\textwidth]{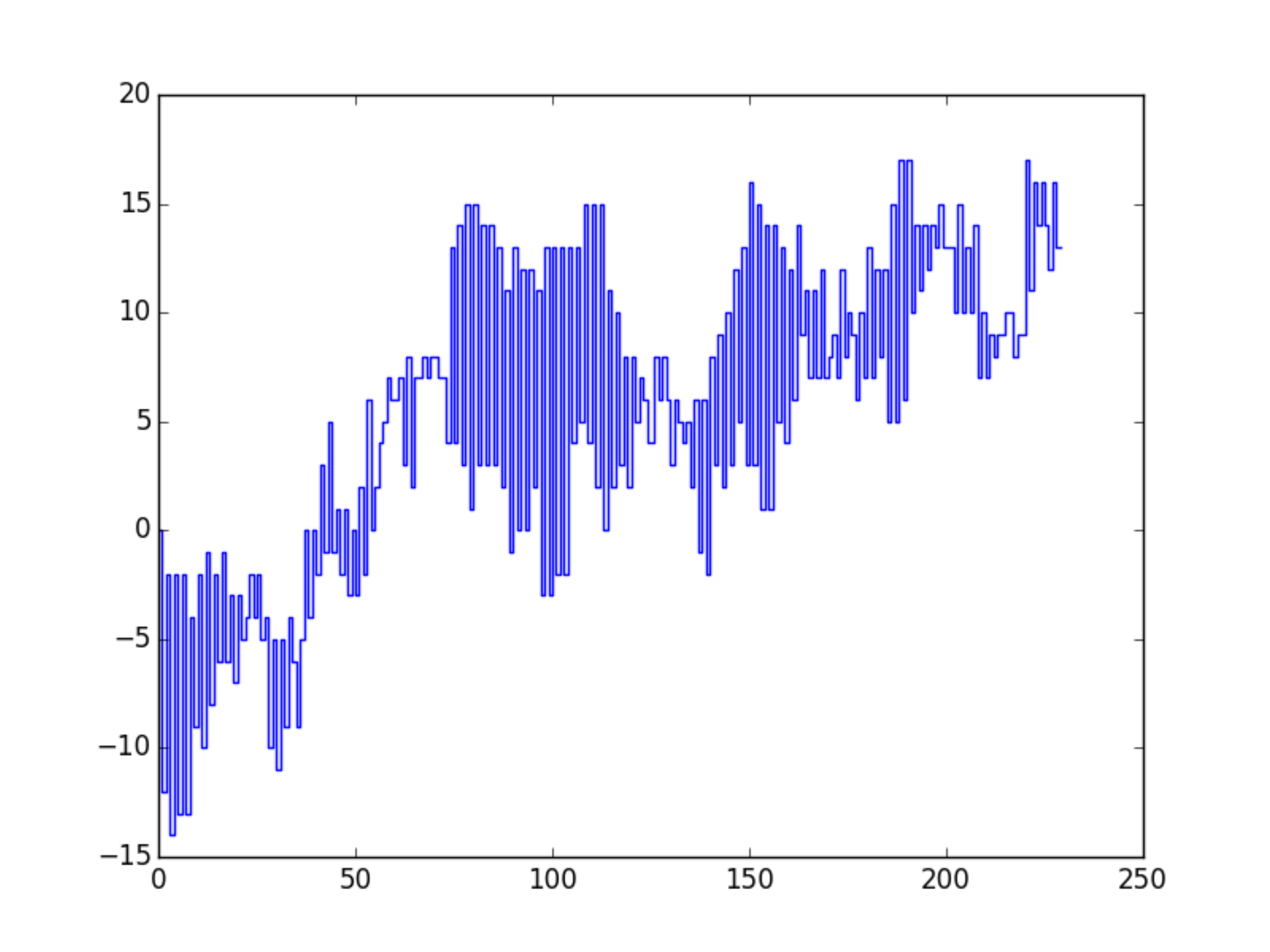}
\end{subfigure}
	\begin{subfigure}[b]{0.3\textwidth}
\includegraphics[width=\textwidth]{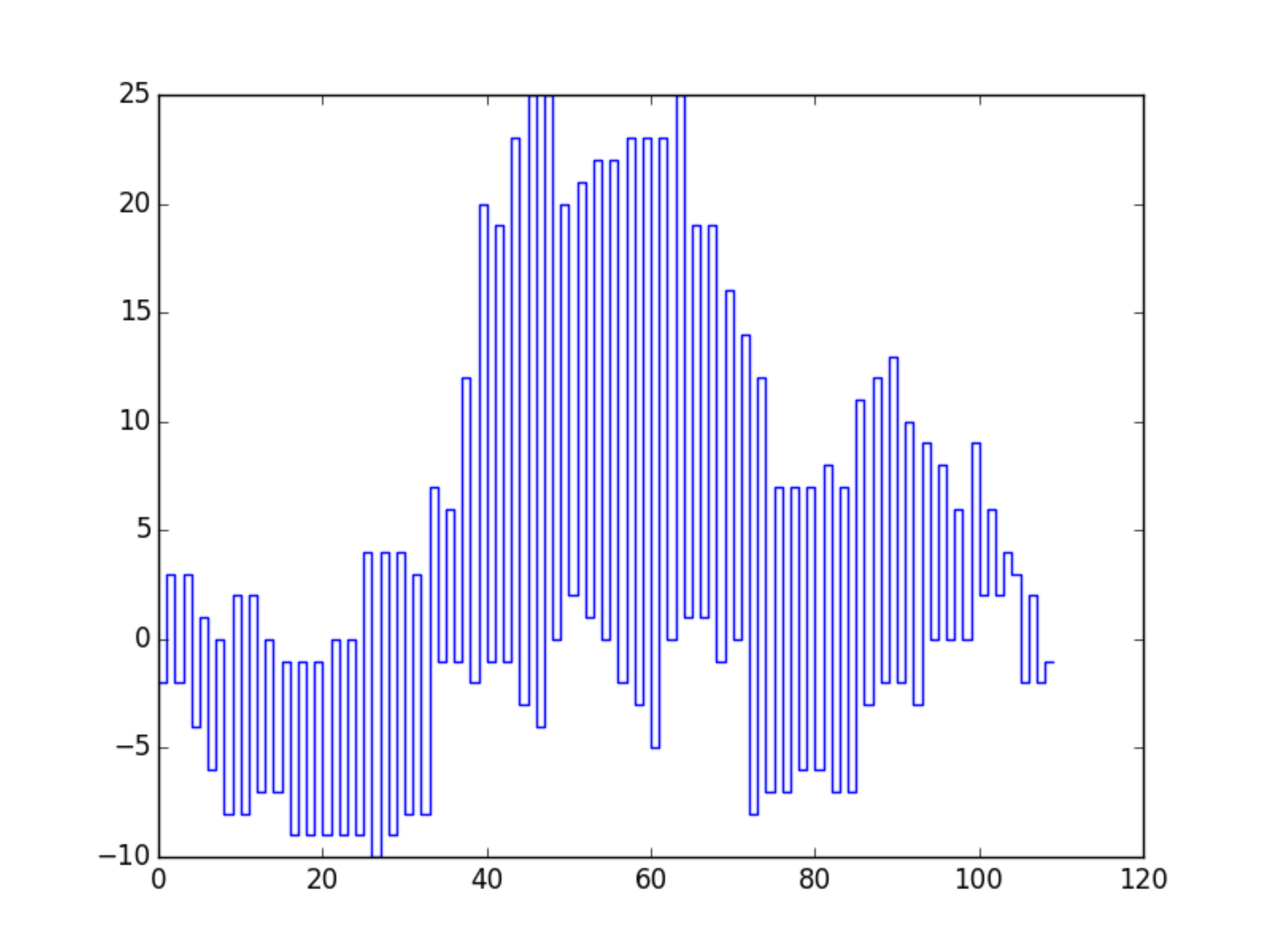}
\end{subfigure}
	\caption{Simulations of IPDSAW for critical temperature
          $\beta=\beta_c$ and length $L=1600$}
	\label{fig:simulations}
\end{figure}

The repulsion between the monomers constituting the polymer and the solvent around them is 
taken into account in the Hamiltonian associated with each path $l\in \Omega_L$ by rewarding 
energetically those pairs of consecutive stretches with opposite directions, i.e.,
\begin{equation}
\textstyle H_{L,\beta}(l_1,\ldots,l_N)=\beta\sum_{n=1}^{N-1}(l_n\;\tilde{\wedge}\;l_{n+1})
\end{equation}
where
\begin{equation}
x\, \tilde\wedge\, y=\begin{dcases*}
	|x|\wedge|y| & if $xy<0$,\\
  0 & otherwise.
  \end{dcases*}
\end{equation}

\begin{figure}[ht]\center
	\includegraphics[width=.3\textwidth]{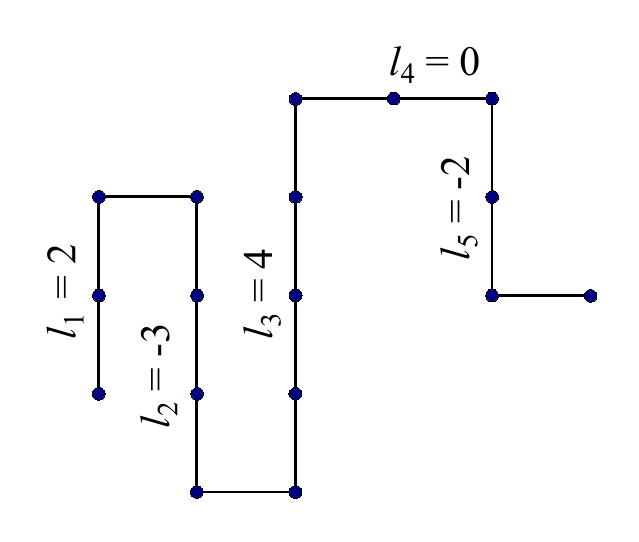}
	\caption{Example of a trajectory  $l\in \cL_{N,L}$ with $N=5$ vertical stretches, a total  length $L=16$ and 
	and Hamiltonian $H_{L,\beta}(\pi)=5 \beta$.}
	\label{fig:stretches}
\end{figure}

One can already note that large Hamiltonians will be assigned to trajectories made of few 
but long vertical stretches with alternating signs. Such paths will be referred to as collapsed configurations.
With the Hamiltonian  in hand we can define the polymer measure as
\be{defpolme}
P_{L,\beta}(l)=\frac{e^{H_{L,\beta}(l)}}{Z_{L,\beta}},\quad l\in \Omega_L
\ee
where $Z_{L,\beta}$ is  partition function of the model, i.e.,
\begin{equation}\label{pff}
Z_{L,\beta}=\sum_{N=1}^{L}\sum_{l\in\mathcal{L}_{N,L}} \,  e^{H_{L,\beta}(l)}.
\end{equation}

\subsection{Random walk representation and  collapse transition}\label{free}
It is custom for such statistical mechanical models to introduce the free energy per monomers $f(\beta)$ as the limiting exponential growth rate of the partition function,
i.e.
\begin{equation}\label{exis}
f(\beta)=\lim_{L\to\infty}\frac{1}{L}\log Z_{L,\beta}=\sup_{L\in\mathbb{N}}\frac{1}{L}\log Z_{L,\beta}<\infty.
\end{equation}
Both equalities in \eqref{exis} are straightforward consequences of the fact that  $(\log Z_{L,\beta})_{L=1}^\infty$
is a super-additive sequence.  
A phase transitions of such system is associated with a loss of analyticity of $\beta\to f(\beta)$ at some critical point.
In \cite{NGP13}, we displayed an alternative way of computing the partition function that turns out to 
simplify the investigation of the phase diagram. It is indeed possible to exhibit an 
auxiliary random walk $V=(V_i)_{i=0}^\infty$ with geometric increments and with law $\bP_\beta$ such that 
\be{partfun}
Z_{L,\beta}=e^{\beta L} \sum_{N=1}^L (\Gamma_\beta)^N \bP_{\beta} (V\in \cV_{N,L-N})
\ee
where $\cV_{N,L}:=\{V\colon G_N(V)=L-N, V_{N+1}=0\}$, where $G_N(V):=\sum_{i=1}^N |V_i|$ is the geometric area in between $V$ and the horizontal axis up to time $N$
and where 
\begin{align}\label{disti}
\Gamma_\beta\;=\frac{c_\beta}{e^{\beta}}
\end{align}
with  $c_\beta:=\frac{1+e^{-\beta/2}}{1-e^{-\beta/2}}$ that is simply the normalizing constant of $\bP_\beta$. In section \ref{sec:rep}, we will recall how to exhibit such a  random walk representation of $Z_{L,\beta}$ but let us mention already that, for $N\in \{1,\dots,L\}$, the contribution to the partition function of those trajectories in $\cL_{N,L}$ is given by the term indexed by $N$
in the sum of \eqref{partfun}.  
\smallskip

A useful feature of the random walk representation  is that it allows us to read the phase diagram on   \eqref{partfun} directly.  To that purpose, we note that \eqref{partfun} makes it  natural to define the
\emph{excess free energy} as  $\tilde{f}(\beta):=f(\beta)-\beta$ so that the exponential growth rate of the sum in the r.h.s. of \eqref{partfun}
equals $\tilde f(\beta)$.
Then,  $\beta\mapsto \Gamma_\beta$ being  a decreasing bijection from 
$(0,\infty)$ to $(0,\infty)$, we denote by $\beta_c$ the unique solution of $\Gamma_\beta=1$.  For $\beta\geq \beta_c$
the inequality $\Gamma_\beta\leq 1$ immediately yields that $\tilde f(\beta)=0$ since those terms indexed by  $N\sim\sqrt{L}$ in
\eqref{partfun} decay subexponentially. As a consequence the trajectories dominating $Z_{L,\beta}$ have a small 
horizontal extension, i.e., $N=o(L)$. When $\beta<\beta_c$ in turn, $\Gamma_\beta>1$ and since for $c\in (0,1]$ the quantity 
$P_\beta(\cV_{cL,(1-c)L})$ decays exponentially fast with a rate that vanishes as $c\to 0$
we can claim that the dominating 
trajectories in $Z_{L,\beta}$ have an horizontal extension of order $L$, and moreover that  $\tilde f(\beta)>0$.
Thus, the free energy is non analytic at $\beta_c$ and we can
partition $[0,\infty)$ into a collapsed phase denoted by $\mathcal{C}$ and an extended phase denoted by $\mathcal{E}$, i.e,
\begin{equation}\label{eq:colphase}
\mathcal{C}:=\{\beta:\tilde{f}(\beta)=0\}=\{\beta:\beta\geq\beta_c\}
\end{equation}
and
\begin{equation}\label{eq:extphase}
\mathcal{E}:=\{\beta:\tilde{f}(\beta)>0\}=\{\beta:\beta<\beta_c\}.
\end{equation}

We shall see that, in fact, there are three regimes; collapsed ($\beta>\beta_c$), critical ($\beta=\beta_c$)
and extended ($\beta<\beta_c$), in which the asymptotics of the partition function and the path  properties are radically different.

\br{satp}
Observe that the main difference between IPDSAW and wetting/copolymer models, comes from the fact that 
the saturated phase (where the free energy is trivial) corresponds to a maximization of energy for IPDSAW, and, on the opposite,
to a maximization of entropy for wetting/copolymer models. We refer to Giacomin \cite{cf:Gia} or den Hollander \cite{cf:dH} for a review on wetting/copolymer models.
\er

\section{Main results}

We mentioned in the preceding section that the excess free energy $\tilde f(\beta)$ is the exponential 
growth rate of the sum in the r.h.s. of \eqref{partfun}. For this reason we set  
\be{defZti}
\tilde Z_{L,\beta}=e^{-L \beta} Z_{L,\beta},
\ee
and we recall that the definition of the polymer measure in \eqref{defpolme} is left unchanged  if we replace the denominator by $\tilde{Z}_{L,\beta}$
and substract  $L \beta$ to the Hamiltonian.  


\subsection{Scaling  limit of  the horizontal extension}\label{nor}
Displaying sharp asymptotic estimates of the partition function as the system size diverges is a major issue in statistical mechanics.
Computing the probability mass of a certain subset of trajectories under the polymer measure indeed requires to have a good control on the denominator in 
\eqref{defpolme}.  For the extended and the critical regimes, we display in Theorem \ref{pfa} below an equivalent of the partition function allowing us e.g 
to exhibit the polynomial decay rate of the partition function at the critical point.  For the collapsed regime,
in turn, we recall the bounds on $\tilde{Z}_{L,\beta}$ that had been obtained in \cite{CNGP13} allowing us to identify its sub-exponential decay rate.

Note that in Remark \ref{recons} below, we provide some complements concerning Theorems \ref{pfa} and \ref{pfa2}  among which the exact value of some 
pre-factors when an expression  is available.  
We also denote by  $f_{ex}$ the density
of the area below a normalized Brownian excursion (see e.g. Janson~\cite{J07}) and we set
$C_\beta:=(\bE_\beta(V_1^2))^{-1/2}$. Thus, we can define  $w(x)=C_\beta\, f_{ex}(C_\beta\, x)$.

\begin{theorem}[Asymptotics of partition function]\label{pfa}
\begin{enumerate}[(1)]
\item if $\beta<\beta_c$, there exists a $c>0$ such that 
$$\tilde{Z}_{L,\beta}= c\, e^{\tilde{f}(\beta) L} (1+o(1)), $$
\item for $\beta=\beta_c$, there exists a $c>0$ such that 
$$\tilde{Z}_{L,\beta}= \frac{c}{L^{2/3}} (1+o(1)) \quad  \text{ with} \quad c=\frac{1+e^{\frac{\beta}{2}}}{(24\, \pi\, \bE_{\beta} (V_1^2))^{\frac12} \,\int_0^{+\infty}
  x^{-3} w(x^{-\frac32})\, dx},$$\\
\item for $\beta>\beta_c$, there exists $c_1,c_2,\kappa>0$ such that 
$$\frac{c_1}{L^\kappa} e^{\tilde{G}(a(\beta)) \sqrt{L}} \leq \tilde{Z}_{L,\beta} \leq \frac{c_2}{\sqrt{L}} e^{\tilde{G}(a(\beta)) \sqrt{L}}
\quad \text{for}\quad  L\in \N.$$
\end{enumerate}
\end{theorem}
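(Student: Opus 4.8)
All three statements start from the random walk representation \eqref{partfun}, which we rewrite as $\tilde Z_{L,\beta}=\sum_{N\ge 1}\Gamma_\beta^{\,N}\,\bP_\beta\big(G_N(V)=L-2N,\ V_{N+1}=0\big)$, the strategy being to recognise the right-hand side as a (possibly exponentially tilted) renewal mass function. Cut the walk $V$ at its successive visits to $0$; the pieces so obtained are i.i.d.\ under $\bP_\beta$, the $j$-th having length $W_j$ and carrying area $A_j=\sum_i|V_i|$ (sum over the piece). Put $\Delta_j:=A_j+2W_j$; then along $\{V_{N+1}=0\}$ one has $N+1=\sum_j W_j$, $L=G_N(V)+2N=\sum_j\Delta_j-2$, and $\Gamma_\beta^{\,N}=\Gamma_\beta^{-1}\prod_j\Gamma_\beta^{W_j}$, so that summing over the number $m$ of pieces gives the exact identity $\tilde Z_{L,\beta}=\Gamma_\beta^{-1}\sum_{m\ge 1}\bE_\beta\big[\prod_{j=1}^{m}\Gamma_\beta^{W_j}\,\ind_{\Delta_1+\dots+\Delta_m=L+2}\big]$, equivalently, in generating function form,
\[
\sum_{L\ge 1}\tilde Z_{L,\beta}\,z^{L}=\frac{\psi(z)}{\Gamma_\beta\,z^{2}\,\big(1-\psi(z)\big)},\qquad \psi(z):=\bE_\beta\big[\Gamma_\beta^{W_1}\,z^{\Delta_1}\big].
\]
(One subtlety: since $V$ may step over $0$, a piece need not be a one-sided excursion; but a long piece carrying a large area is, to leading order, a single macroscopic arch, and the special role of the first and last stretches of the polymer accounts for the prefactor $1+e^{\beta/2}$ in part (2).)

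\emph{Part (1), $\beta<\beta_c$, i.e.\ $\Gamma_\beta>1$.} Since $\Delta_1\ge 2W_1$ with $\Delta_1/W_1^{3/2}$ of order one, while $\bP_\beta(W_1=w)\asymp w^{-3/2}$, the map $\lambda\mapsto\bE_\beta[\Gamma_\beta^{W_1}e^{-\lambda\Delta_1}]$ is finite and strictly decreasing on $(0,\infty)$, diverges as $\lambda\downarrow 0$ and vanishes as $\lambda\to\infty$; let $\tilde f(\beta)>0$ be its unique zero, so that $\tilde f(\beta)$ is the excess free energy. Tilting the law of $(W_j,\Delta_j)$ by $\Gamma_\beta^{W_j}e^{-\tilde f(\beta)\Delta_j}$ produces a probability $\hat\bP$ under which $\Delta_1$ has exponential tails — in particular finite mean — and is aperiodic; moreover (the edge of the disc of convergence of $\psi$ being, by a saddle–point estimate on the area of a long piece, a point where $\psi=+\infty$) $z^*:=e^{-\tilde f(\beta)}$ is interior to that disc, hence a simple pole of the generating function. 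Equivalently, the identity above reads $\tilde Z_{L,\beta}=\Gamma_\beta^{-1}e^{\tilde f(\beta)(L+2)}\,\hat U(L+2)$ with $\hat U$ the renewal mass function of $(\Delta_j)$ under $\hat\bP$, and the lattice renewal theorem gives $\hat U(n)\to 1/\bE_{\hat\bP}[\Delta_1]$; thus $\tilde Z_{L,\beta}=c\,e^{\tilde f(\beta)L}(1+o(1))$ with $c=e^{2\tilde f(\beta)}/(\Gamma_\beta\,\bE_{\hat\bP}[\Delta_1])$.

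\emph{Part (2), $\beta=\beta_c$, i.e.\ $\Gamma_{\beta_c}=1$.} No tilt is needed: $\tilde Z_{L,\beta_c}$ is exactly the renewal mass function $u_{L+2}$ of the i.i.d.\ sequence $(\Delta_j)$. The heart of the matter is the tail of $\Delta_1$. The length of a piece satisfies $\bP_{\beta_c}(W_1>w)\sim c_\star w^{-1/2}$, and conditionally on being long a piece converges, under diffusive rescaling, to a Brownian excursion, so its area is of order $(\bE_{\beta_c}(V_1^2))^{1/2}\,w^{3/2}\,Y$ with $Y$ the area under a normalised Brownian excursion (density $f_{ex}$); mixing over $w$ gives $\bP_{\beta_c}(\Delta_1>t)\sim\bP_{\beta_c}(A_1>t)\sim C\,t^{-1/3}$, the exponent $\tfrac13=\tfrac12\cdot\tfrac23$ being precisely what produces the $L^{2/3}$ rate, and $C$ an explicit constant involving $\bE_{\beta_c}(V_1^2)$, the moment $\bE[Y^{1/3}]$ (whence $f_{ex}$ in the statement), and the boundary weight $1+e^{\beta/2}$. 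Thus $\Delta_1$ is regularly varying of index $\alpha=1/3\in(0,1)$, and the strong (local) renewal theorem yields $u_n\sim\frac{\sin(\pi\alpha)}{\pi C}\,n^{\alpha-1}=\frac{\sqrt3}{2\pi C}\,n^{-2/3}$, so $\tilde Z_{L,\beta_c}=u_{L+2}$ matches the constant in the theorem after $C$ is identified. The main work — and the hardest point of the theorem — is here: because $\alpha\le 1/2$ the strong renewal theorem is not automatic and requires a local regularity estimate for the law of $\Delta_1$ (of the type $\bP_{\beta_c}(\Delta_1=n)=O(n^{-1}\bP_{\beta_c}(\Delta_1>n))$), which one obtains from a joint local limit theorem for the pair $(W_1,A_1)$ of a piece; the same local limit theorem, together with uniform control of the "single macroscopic arch'' approximation, is what pins down the constant $C$.

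\emph{Part (3), $\beta>\beta_c$, i.e.\ $\Gamma_\beta<1$.} Now the factor $\prod_j\Gamma_\beta^{W_j}<1$ penalises long pieces, so reaching $\Delta_1+\dots+\Delta_m=L$ forces a large deviation of the pieces, and both the exponent $\sqrt L$ and the rate $\tilde G$ evaluated at $a(\beta)$ arise from the polymer concentrating into a single droplet of horizontal size of order $\sqrt L$ whose optimal profile solves a variational (Wulff) problem — the upper bound via the energy/entropy balance of such droplets, the lower bound by exhibiting one of the optimal shape. These are exactly the estimates established in \cite{CNGP13}, so for this part the proof consists in quoting that reference.
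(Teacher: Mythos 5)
Your overall strategy is the paper's: rewrite $\tilde Z_{L,\beta}$ as a (possibly tilted) renewal mass function for the auxiliary walk $V$ and then apply the renewal theorem when $\Gamma_\beta>1$, or a stable renewal theorem after a local tail estimate when $\Gamma_\beta=1$, and for $\beta>\beta_c$ cite \cite{CNGP13}. For part (1) your decomposition of $V$ at its zeros \emph{is} the paper's pattern decomposition (patterns are cut at $l_j=0$, i.e.\ $V_j=0$), and the tilting/renewal argument is essentially the same. There are bookkeeping slips — the constraint coming from \eqref{tgh} is $G_N(V)=L-N$, not $L-2N$ (the paper's $\cV_{N,L-N}$ notation is internally inconsistent and invites this misreading), so the correct increment is $\Delta_j=A_j+W_j$ rather than $A_j+2W_j$; this shifts constants but not structure.

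The genuine gap is in part (2). You keep the zero-cut decomposition, whereas the paper switches to \emph{sign-change} cuts: $\tau_{k+1}=\inf\{i>\tau_k : V_{i-1}\neq 0,\ V_{i-1}V_i\le 0\}$, with stationary entry law $\mu_\beta$. That switch is precisely what makes the hard step go through. A piece between sign changes is one-sided, so the Denisov--Kolb--Wachtel local limit theorem for the excursion area, $\sup_a|n^{3/2}\probbeta{G_n=a\mid\tau=n}-w(a/n^{3/2})|\to 0$ (Theorem \ref{theo:wachtellclt}), applies directly and yields the needed \emph{local} asymptotics $\bP_{\beta,\mu_\beta}(X_1=n)\sim c_{1,\beta}n^{-4/3}$ (Lemma \ref{tes}), which is what Doney's theorem requires — exactly the ``local regularity estimate'' you identify as the hardest point. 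Your zero-cut pieces can be two-sided (the walk can jump over $0$), so DKW does not apply to them off the shelf; the ``single macroscopic arch'' heuristic you invoke to repair this is plausible but is itself a nontrivial estimate you have not supplied, and it is precisely the kind of work the paper avoids by choosing a better decomposition. Relatedly, the prefactor $1+e^{\beta/2}$ does not come from ``the first and last stretches'' as you suggest: in the paper it is the Kesten constant for $\bP_{\beta,\mu_\beta}(\tau=n)\sim(1+e^{\beta/2})\sqrt{\bE_\beta(V_1^2)/(2\pi)}\,n^{-3/2}$, computed in the Appendix by comparing the entry law $\mu_\beta$ to the walk started at $0$; the corresponding constant for zero-cut pieces would be different and would have to be rederived.
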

\medskip

For each $l\in \Omega_L$, the variable $N_l$ denotes the horizontal extension of $l$, i.e., the integer $N\in \{1,\dots,L\}$
such that  $l\in \cL_{N,L}$.
With Theorem \ref{pfa2} below, we provide the scaling limit of the horizontal extension of a typical path $l$
sampled from $P_{L,\beta}$ and as $L\to \infty$. As for Theorem \ref{pfa} and for the sake of completeness, 
we integrate the collapsed regime into the theorem although this
regime was dealt with  in \cite[Theorem D]{CNGP13}.

\begin{theorem}[Horizontal extension]\label{pfa2}

  \begin{enumerate}[(1)]
  \item if $\beta<\beta_c$, there exists a real
constant $e(\beta)\in (0,1)$ such that 
\be{cvhex}
\lim_{L \to \infty} P_{L,\beta} \Big( \Big| \frac{N_l}{L}-e(\beta)\Big |\geq \gep\Big)=0.
\ee
\item if $\beta=\beta_c$, then 
$$\lim_{L\to \infty} \frac{N_l}{L^{2/3}}=_{\text{law}} g_1$$
where $g_a =\inf\ens{ t>0 \int_0^t \valabs{B_s}\, ds = a}$ is the
continuous inverse of the geometric Brownian area, and we consider
$g_1$ under the conditional law of the Brownian motion conditioned by
$B_{g_1}=0$.

\item  If $\beta>\beta_c$, there exists a unique
real number $a(\beta)$ such that 
\be{cvhco}
\lim_{L \to \infty} P_{L,\beta} \Big( \Big| \frac{N_l}{\sqrt{L}}-a(\beta)\Big |\geq \gep\Big)=0.
\ee
\end{enumerate}
\end{theorem}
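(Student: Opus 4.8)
The plan is to deduce all three statements from the random walk representation \eqref{partfun} together with the sharp partition-function asymptotics of Theorem \ref{pfa}, since under $P_{L,\beta}$ the probability that $N_l = N$ is exactly $(\Gamma_\beta)^N \bP_\beta(V \in \cV_{N,L-N}) / \tilde Z_{L,\beta}$. Thus the whole problem reduces to controlling, uniformly in the relevant range of $N$, the quantity $\bP_\beta(G_N(V) = L - N,\ V_{N+1} = 0)$, i.e.\ a local limit estimate for the pair $(G_N(V), V_{N+1})$ where $V$ is the auxiliary geometric random walk. I expect a preliminary section establishing exactly such a bivariate local central limit theorem — $\bP_\beta(G_N(V) = m,\ V_{N+1} = 0) \sim$ (const)$\cdot N^{-2}\, w(m/N^{3/2})$ in the diffusive regime $m \asymp N^{3/2}$, with matching large-deviation corrections when $m$ is much larger than $N^{3/2}$ — so I would invoke that here as the main technical input.

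For part (3), $\beta > \beta_c$: here $\Gamma_\beta < 1$, so the summand is maximized by balancing the exponential decay $(\Gamma_\beta)^N = e^{-|\log\Gamma_\beta|\, N}$ against the large-deviation cost of forcing $G_N(V) = L-N$ with $N \ll L$, which is of order $e^{N\,\tilde G(L/N^2)}$ or similar. A Laplace/saddle-point analysis on $N = a\sqrt{L}$ identifies a unique maximizer $a(\beta)$ (this is the same $a(\beta)$ appearing in Theorem \ref{pfa}(3)); concentration of $N_l/\sqrt L$ at $a(\beta)$ then follows because the exponent $\sqrt L\, \Phi(N/\sqrt L)$ has a non-degenerate maximum, so values of $N$ with $|N/\sqrt L - a(\beta)| \geq \gep$ contribute a factor $e^{-c\sqrt L}$ less. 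Since Theorem \ref{pfa}(3) only gives $\tilde Z_{L,\beta}$ up to polynomial factors, this is enough — we do not need the prefactor, only that the numerator and denominator share the same exponential rate $\tilde G(a(\beta))\sqrt L$.

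For part (1), $\beta < \beta_c$: now $\Gamma_\beta > 1$ and $N$ is of order $L$, so write $N = cL$ and use that $\bP_\beta(\cV_{cL,(1-c)L})$ decays like $e^{-L\, I(c)}$ for an explicit rate function $I$ coming from the Cramér-type large deviations of $G_N(V)/N$; the summand behaves as $e^{L(c\log\Gamma_\beta - I(c))}$, and $e(\beta)$ is the unique maximizer of $c \mapsto c\log\Gamma_\beta - I(c)$ on $(0,1)$ (strict concavity of $I$ gives uniqueness, and $e(\beta) \in (0,1)$ follows from the boundary behavior noted in the introduction: $I(c) \to 0$ as $c \to 0$ forces the maximizer away from $0$, while finiteness forces it below $1$). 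Again the matching exponential rate in $\tilde Z_{L,\beta}$ is $\tilde f(\beta)$ by Theorem \ref{pfa}(1), so concentration of $N_l/L$ at $e(\beta)$ is immediate from the Laplace argument. Part (2), $\beta = \beta_c$, is the delicate one and the main obstacle: here $\Gamma_\beta = 1$, the summand is purely $\bP_\beta(\cV_{N,L-N})$, the relevant scale is $N \asymp L^{2/3}$ so that $G_N(V) \asymp N^{3/2} \asymp L$, and one is squarely in the diffusive regime of the bivariate local limit theorem. Writing $N = t L^{2/3}$ and $L - N \sim L$, the local limit estimate gives summand $\approx L^{-4/3} w(t^{-3/2})\,(\text{Riemann sum in }t)$, so after dividing by $\tilde Z_{L,\beta} \sim c\, L^{-2/3}$ the law of $N_l/L^{2/3}$ converges to the density proportional to $t^{-2}\, w(t^{-3/2})$; identifying this limit with the law of $g_1 = \inf\{t > 0 : \int_0^t |B_s|\,ds = 1\}$ under the bridge conditioning $B_{g_1} = 0$ requires recognizing $w$ as the Brownian-excursion-area density rescaled by $C_\beta$, which is precisely where the invariance principle for $(V_i)$ converging to Brownian motion and the known distribution of the Brownian excursion area (Janson \cite{J07}) enter. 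The hard part will be making the passage to the limit uniform — controlling the tails $t$ small and $t$ large (equivalently $N$ much smaller or much larger than $L^{2/3}$) so that the Riemann sum genuinely converges and no mass escapes — and correctly matching the normalizing constants between the local limit theorem, the excursion-area density, and the prefactor $c$ of Theorem \ref{pfa}(2).
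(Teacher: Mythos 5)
Your approach to parts (1) and (3) is recognizable. For part (3) the Laplace/saddle-point argument on $N=a\sqrt{L}$, using $\tilde G$ and matching exponential rates, is essentially the proof from \cite{CNGP13}, which the paper simply cites. For part (1) you propose a direct Cram\'er/Laplace argument: write $N=cL$, obtain a rate function $I(c)$ from the bivariate LDP for $(G_N(V)/N,V_N/N)$, and identify $e(\beta)$ as the maximizer of $c\log\Gamma_\beta-I(c)$. The paper instead builds a regenerative process $(\sigma_i,\nu_i,y_i)$ from the decomposition into ``patterns'' (pieces between consecutive zero vertical stretches), shows that $\hat f(\beta)<\tilde f(\beta)$ so inter-arrivals have exponential tails, and gets $N_l/L\to e(\beta)=\EEbeta(\nu_1)/\EEbeta(\sigma_1)$ from the renewal theorem plus the LLN. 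Your variational route could be made rigorous, but it requires establishing and verifying strict convexity of $I$ (you wrote ``strict concavity,'' a slip), and it does not hand you the explicit formula for $e(\beta)$ nor the Brownian envelope limit of Theorem \ref{extph}, both of which fall out of the renewal structure for free; this is presumably why the paper chooses it.

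Part (2) is where the real gap lies, and it is not a gap one can patch with ``uniformity in the tails.'' You propose a bivariate local limit theorem of the form $\bP_\beta(G_N(V)=m,\,V_{N+1}=0)\sim \mathrm{const}\cdot N^{-2}\,w(m/N^{3/2})$, with $w=C_\beta f_{\mathrm{ex}}(C_\beta\cdot)$ the normalized Brownian \emph{excursion}-area density, and then conclude by a Riemann-sum/Laplace argument. This LLT is not correct: conditioning on $V_N$ being $O(1)$ makes the rescaled walk converge to a Brownian \emph{bridge}, which crosses zero infinitely often, and the geometric area $\int_0^1|b_s|\,ds$ of a bridge has a genuinely different law from the excursion area $\int_0^1 e_s\,ds$ (their means already differ by a factor $2$). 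The density $f_{\mathrm{ex}}$ appears legitimately only in an LLT for a \emph{single} sign-constant excursion, i.e.\ conditional on $\tau=n$; that is exactly the Denisov--Kolb--Wachtel Theorem \ref{theo:wachtellclt} the paper invokes. The paper deliberately avoids any direct LLT for $(G_N,V_N)$ — indeed Remark \ref{rtech} flags that conditioning on a large geometric area $G_N$ is precisely what they cannot handle directly — and instead decomposes the walk at sign changes $(\tau_k)$, obtaining IID pairs $(\mathfrak{N}_k,\mathfrak{A}_k)$ with $X_k=\mathfrak{N}_k+\mathfrak{A}_k$ regularly varying of index $4/3$, hence a $1/3$-stable regenerative set in the limit, and then writes $N_l/L^{2/3}\to\sum_i Y_i\,\mathfrak{U}_i^{2/3}$. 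Even granting you the correct bridge-area density $\bar w$ in your LLT, you would still face the singular conditioning: the density $\propto t^{-2}\bar w(t^{-3/2})$ would have to be identified with the law of $g_1$ conditionally on $\{B_{g_1}=0\}$, an event of probability zero (and of zero joint density at $y=0$), so the conditioning is only defined via regular conditional probabilities. That identification is not a formal substitution; the paper carries it out in Section \ref{secder} precisely by using the excursion decomposition of $B$ at its zeros and the subordinator $G_{\tau_t}$, i.e.\ the same renewal structure you would be trying to bypass.
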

\begin{remark}\label{recons}
\rm 
\noindent \begin{enumerate}[(1)]
 \item For the extended regime, in Section \ref{extended},  we will decompose each path into a succession of patterns (sub pieces) 
 and we will associate with our model an underlying regenerative process  $(\sigma_i,\nu_i,y_i)_{i\in \N}$  of law $\PPbeta$ 
 in such a way that   $\sigma_i$ (resp. $\nu_i$, resp. $y_i$) plays the role of the number of monomers constituting the 
$i$th pattern (resp. the horizontal extension of the $i$th pattern, resp. the vertical displacement of the $i$th pattern). Then, the constant $c$ in Theorem \ref{pfa} (1) and  the limiting rescaled horizontal 
extension in Theorem \ref{pfa2} (1) satisfy 
$$c=\tfrac{1}{\EEbeta(\sigma_1)}\quad \text{and}\quad
e(\beta)=\tfrac{\EEbeta(\nu_1)}{\EEbeta(\sigma_1)}.$$



\item For the critical regime $\beta=\beta_c$, the appearance of the distribution of $g_1$
  is explained at the end of  Section~\ref{sec:critic}.

\item For the collapsed regime, by inspecting closely (4.29)-(4.35) of \cite{CNGP13}, we see
  that this result can be easily generalized  to a large deviation
  principle of speed $\sqrt{L}$ for the sequence of random variables
  $(N_l/\sqrt{L})_{L\in \N}$  with the good rate function  $a\in
  (0,\infty)\to \tilde{G}(a(\beta))-\tilde G(a)$ which
  admits a unique minimum $a(\beta)$. This large deviation result
  holds under $P^{o}_{L,\beta}$ the polymer measure restricted to have only one bead.
  A rigorous definition of a bead is recalled in the paragraph of Section \ref{ver} that is dedicated to the collapsed phase.
  Note, however, that we are not able at this stage to prove the same LDP under $P_{L,\beta}$.

The good rate function $\tilde{G}(a)-\tilde{G}(a(\beta))$ was carefully investigated in \cite[Remark 5]{CNGP13} and an exact expression was provided, i.e.,
\begin{equation}\label{defg}
\tilde{G}(a):=a\log\Gamma(\beta)-\tfrac{1}{a}\,\tilde h_0\bigl(\tfrac{1}{a^2},0\bigr)+a\Llam\bigl(\tilde H\bigl(\tfrac{1}{a^2},0\bigr)\bigr),
\end{equation}
where $\Llam$ and $\tilde H$ are defined in \eqref{eq:LLam} and \eqref{eq:deftil}.
Note that, for $\beta>\beta_c$, the function  $a\mapsto\tilde{G}(a)$ is $\cC^\infty$, strictly concave, strictly negative and $a(\beta)$ is the unique zero of its derivative  on $(0,\infty)$.


\end{enumerate}

\end{remark}

\subsection{Scaling limit of the vertical extension}\label{ver}

The horizontal extension of $l\in \Omega_L$  can be viewed as the projection of $l$ onto the horizontal axis. Thus,  after providing the scaling limit of $N_l$ in each of the three phases, a natural issue consists in displaying  the scaling limit of the projection of the polymer onto the vertical axis. To be more specific, we will try to exhibit the scaling limit of the whole path rescaled horizontally by its horizontal extension
$N_l$  and vertically by some ad-hoc power of $N_l$. 

To that aim, the fact that each trajectory $l\in \Omega_L$ is made of a succession of vertical stretches  makes it convenient  to give a representation of the trajectory in terms of its upper and lower envelopes. Thus, we
pick $l \in \cL_{N,L}$ and we let $\cE_l^+=(\cE^+_{l,i})_{i=0}^{N+1}$ and $\cE_l^-=(\cE^-_{l,i})_{i=0}^{N+1}$  be  the upper and the lower envelopes of $l$, i.e., the $(1+N)$-step paths that link the 
top and the bottom of each stretch consecutively. Thus, $\cE^+_{l,0}=\cE^-_{l,0}=0$,
\begin{align}\label{trek}
\cE^+_{l,i}&=\max\{l_1+\dots+l_{i-1}, l_1+\dots+l_{i}\},\quad i\in \{1,\dots,N\},\\
\cE^-_{l,i}&=\min\{l_1+\dots+l_{i-1}, l_1+\dots+l_{i}\},\quad i\in \{1,\dots,N\},
\end{align}
and $\cE^+_{l,N+1}=\cE^-_{l,N+1}=l_1+\dots+l_{N}$ (see Fig. \ref{fig:upper}).
Note that the area in between these two envelopes is completely filled by the path and therefore, we will focus on the scaling limits of $\cE^+_l$ and $\cE^-_l$.

At this stage, we define $\tilde Y: [0,1]\to \R$  to be the time-space rescaled cadlag process of a given  $(Y_i)_{i=0}^{N+1}\in \Z^{N+1}$ satisfying  $Y_0=0$. Thus,  
\be{defti}
\tilde Y(t)= \frac{1}{N+1}\,  Y_{\lfloor t\,(N+1)\rfloor},\quad t\in [0,1],
\ee
and  for each $l \in \cL_{N,L}$ we let $\tilde\cE_l^+$, $\tilde\cE_l^-$ be the time-space rescaled processes associated with the upper envelope $\cE_l^+$ and with  the lower envelope $\cE_l^-$, respectively.

In this paper we will focus on the infinite volume limit of the whole path in the extended phase ($\beta<\beta_c$) and inside the collapsed phase ($\beta>\beta_c$). 
Concerning the critical regime ($\beta=\beta_c$) this limit will be discussed as an open problem in section \ref{disc} below. 

\begin{figure}[ht]\center
\begin{tabular}{cc}
\includegraphics[width=.30\textwidth]{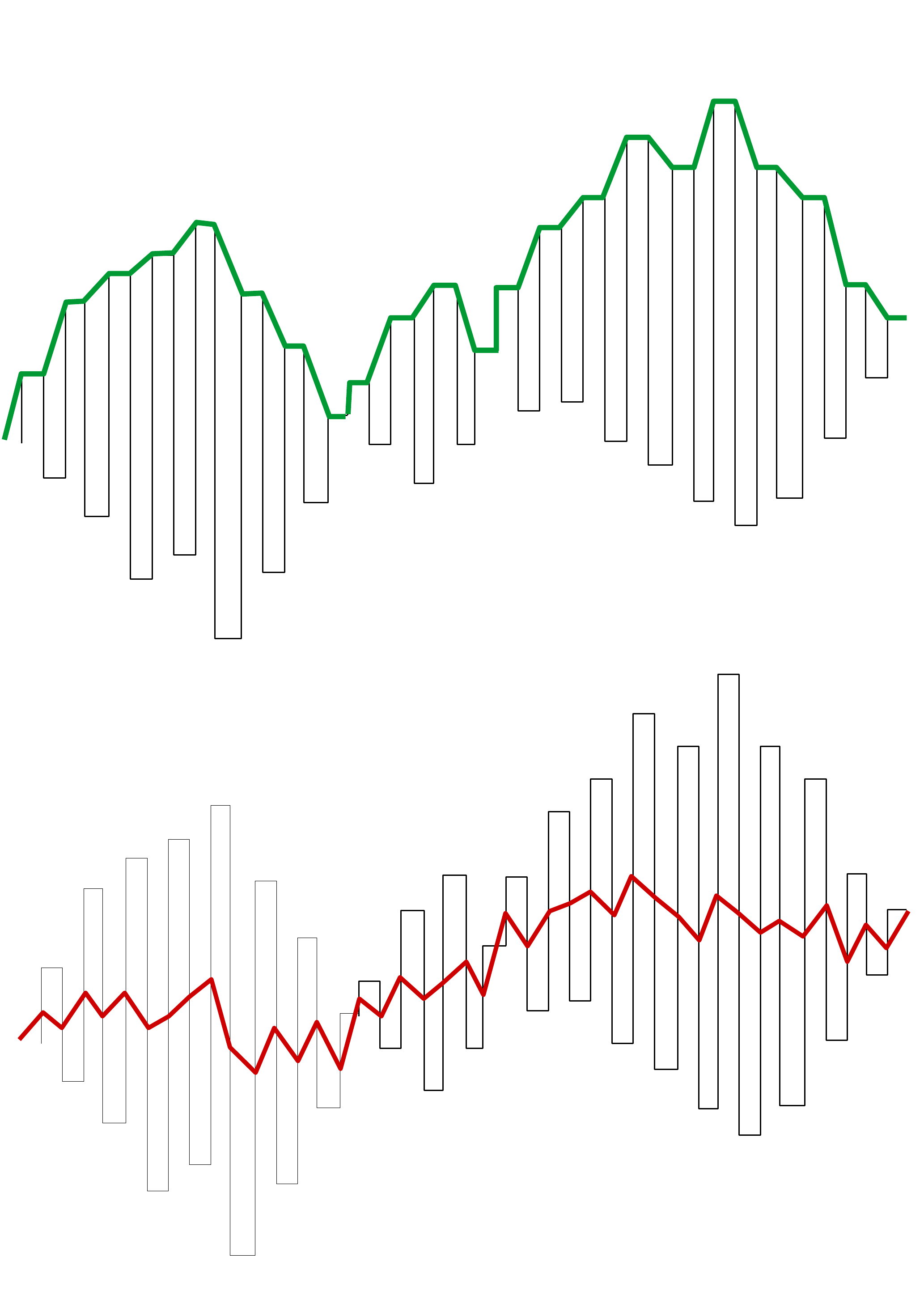} & \includegraphics[width=.30\textwidth]{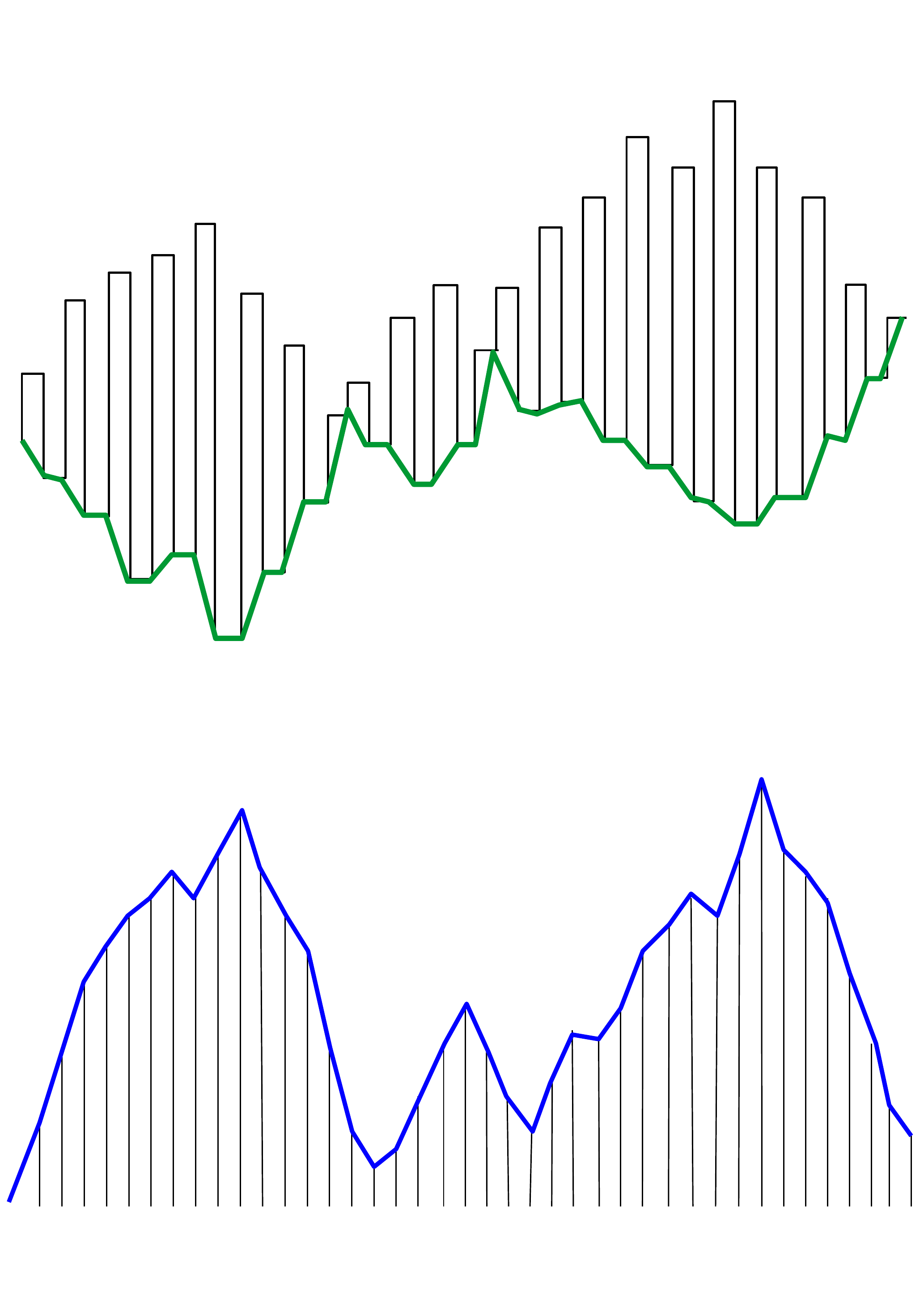}\\

\end{tabular}
	
	\caption{Example of the upper envelope (top-left picture), of the 
	lower envelope (top-right picture), of the middle line (bottom left picture) and of the profile (bottom right picture)
	of a given trajectory (in dashed line).}
	\label{fig:upper}
\end{figure}

\subsubsection{\bf The extended phase ($\beta<\beta_c$)}

When $\beta<\beta_c$ and under $P_{L,\beta}$, we have seen that a typical path $l$ adopts an extended configuration, characterized by a number of horizontal steps of order $L$. We let $Q_{L,\beta}$ be the law of $
\sqrt{N_l} \, \big(\tilde \cE_l^-(s), \tilde\cE_l^{+}(s)\big)_{s\in [0,1]}$ under
$P_{L,\beta}$. We let also $(B_s)_{s\in [0,1]}$ be a standard Brownian motion.

\begin{theorem} \label{extph}
For  $\beta<\beta_c$, there exists a  $\sigma_{\beta}>0$ such that 
\be{cvvp}
Q_{L,\beta}\xrightarrow[L\to \infty]{d} \sigma_{\beta} \big( B_s,B_s\big)_{s\in [0,1]}.
\ee
\end{theorem}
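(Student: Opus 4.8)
\textbf{Proof plan for Theorem~\ref{extph}.}

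The plan is to exploit the regenerative structure announced in Remark~\ref{recons}(1): under a suitable measure the path in the extended phase decomposes into i.i.d.\ patterns $(\sigma_i,\nu_i,y_i)_{i\in\N}$ of law $\PPbeta$, where $\sigma_i$ is the number of monomers, $\nu_i$ the horizontal extension, and $y_i$ the vertical displacement of the $i$th pattern. First I would establish, via the random walk representation \eqref{partfun} and a tilting/renewal argument analogous to the one used to prove Theorem~\ref{pfa}(1), that this renewal description is exact and that the pattern distribution has exponential moments: $\EEbeta(e^{\lambda \sigma_1})<\infty$ and $\EEbeta(e^{\lambda\nu_1})<\infty$ for small $\lambda>0$, and in particular finite variances. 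The key probabilistic input about the vertical displacements is that within a single pattern the upper and lower envelopes differ only by the stretch lengths inside that pattern, which are tight; hence on the scale of the whole path the two rescaled envelopes $\tilde\cE_l^\pm$ are asymptotically indistinguishable, and it suffices to prove a functional CLT for either one, say for the broken-line process interpolating the partial sums $S_k=y_1+\dots+y_k$ of the vertical displacements.

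Second, I would condition on the number of patterns. Writing $k_l$ for the number of complete patterns in a path of length $L$, the law of large numbers for the renewal $\sum_i\sigma_i$ gives $k_l/L\to 1/\EEbeta(\sigma_1)$ and $N_l/L\to e(\beta)=\EEbeta(\nu_1)/\EEbeta(\sigma_1)$ in $P_{L,\beta}$-probability (this is exactly Theorem~\ref{pfa2}(1), which I may assume). The nontrivial point is that the conditioning $\{\sum_i\sigma_i=L\}$ should not destroy the CLT for the $y_i$'s: because $y_1$ is (up to the sign symmetry of the model) centered and has finite variance, and is only weakly correlated with $\sigma_1$, a local-CLT argument for the renewal $\sum\sigma_i$ combined with Donsker's theorem for the two-dimensional walk $(\sum\sigma_i-\text{its mean},\ \sum y_i)$ yields that, conditionally on $\sum_{i\le k}\sigma_i=L$, the rescaled process $\frac1{\sqrt k}S_{\lfloor tk\rfloor}$ still converges to a Brownian motion with variance $\sigma_0^2=\var_{\PPbeta}(y_1)$. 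Rescaling time by $N_l$ instead of $k_l$ and space by $\sqrt{N_l}$ then introduces the deterministic factor $e(\beta)$, and one sets $\sigma_\beta^2=\sigma_0^2\,\EEbeta(\sigma_1)/\EEbeta(\nu_1)$ (or the appropriate combination once the time change $\nu_i$ versus $1$ per pattern is tracked carefully), so that $\sqrt{N_l}\,\tilde\cE_l^\pm(\cdot)\Rightarrow \sigma_\beta B_\cdot$.

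Third, I would upgrade convergence of partial sums to Skorokhod-space convergence of the cadlag interpolations $\tilde\cE_l^\pm$ and then, using the pattern-width tightness from the first step, replace the natural time parametrization (one unit per pattern) by the horizontal-extension parametrization appearing in \eqref{defti}; this is a random but asymptotically linear time change, handled by a standard composition lemma for $D[0,1]$. The joint statement for $(\tilde\cE_l^-,\tilde\cE_l^+)$ with the \emph{same} limiting Brownian motion follows because their difference is $o(\sqrt{N_l})$ uniformly in $s$, which I would get from a maximal inequality on the stretch lengths within patterns (each pattern contributes $O_{\PPbeta}(1)$ and there are $O(L/\log L)$ of them with the largest being $O(\log L)$ by the exponential-moment bound).

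\textbf{Main obstacle.} The delicate step is the second one: controlling the effect of the hard constraint $\sum_i\sigma_i=L$ on the fluctuations of $\sum_i y_i$. This requires a joint local central limit theorem for the lattice random walk $(\sigma_i,y_i)_i$ — or at least a sharp enough local estimate on $\PPbeta(\sum_{i\le k}\sigma_i=L)$ together with an independence-type decoupling — to show that conditioning a Brownian-scale event on $\sum\sigma_i$ only shifts the $y$-process by a negligible amount. Establishing the requisite lattice/aperiodicity and exponential-moment properties of $\PPbeta$ from the explicit renewal built out of \eqref{partfun}, and making the conditional Donsker theorem rigorous in $D[0,1]$, is where the real work lies; everything else is bookkeeping with renewal theory and Skorokhod-space time changes.
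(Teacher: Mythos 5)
Your proposal follows essentially the same route as the paper: the pattern/regenerative decomposition $(\sigma_i,\nu_i,y_i)$ of law $\PPbeta$, reduction to the middle line and the profile (so that the two envelopes coincide in the limit once the maximal stretch is shown to be $O(\log L)$ via the exponential moments of $\sigma_1$), a functional CLT for the partial sums of the $y_i$'s, and a random time change in $D[0,1]$. One substantive remark: the step you single out as the main obstacle — the effect of the constraint $\sum_i\sigma_i=L$ — is dispatched in the paper much more cheaply than by a joint local CLT for $(\sigma_i,y_i)$. The conditioning is on the renewal event $\{L\in\cT\}$, whose probability converges to $1/\mu_\beta>0$ by the renewal theorem; for finite-dimensional marginals at times $t_p<1$ one conditions on the first $\lfloor t_pL/\mu_\beta\rfloor$ blocks and the Radon--Nikodym factor $\PPbeta(L-\sigma_1-\cdots-\sigma_{\lfloor t_pL/\mu_\beta\rfloor}\in\cT)/\PPbeta(L\in\cT)$ tends to $1$ uniformly on a high-probability event, so no lattice/aperiodicity analysis or decoupling estimate is needed (tightness transfers for the same reason, since the conditional measure is absolutely continuous with density bounded by $1/\PPbeta(L\in\cT)$). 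Finally, your candidate constant carries a spurious factor $\EEbeta(\sigma_1)$: tracking the time change (about $tN_l/\EEbeta(\nu_1)$ patterns are completed by horizontal time $tN_l$) gives $\sigma_\beta^2=\EEbeta(y_1^2)/\EEbeta(\nu_1)$, as stated in the paper's remark.
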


\begin{remark}
\rm The constant $\sigma_{\beta}$ takes value $\sqrt{\EEbeta(y_1^2)/\EEbeta(\nu_1)}$ where $y_1$ (resp.  $\nu_1$)
correspond to the vertical (resp. horizontal)  displacement of the 
path on one of the pattern mentioned in Remark \ref{recons} (1). These objects are defined rigorously in Section \ref{extended}
below.
\end{remark}

Although the upper and the lower envelopes of each trajectory $l\in \Omega_L$ seem to be the 
appropriate objects to consider when it comes to describing the geometry of the whole path, it turns out that it is simpler to prove Theorem \ref{extph} by recovering the envelopes from 
two auxiliary processes, i.e, the \emph{middle line} $M_l$
and the \emph{ profile} $|l|$. 
Thus, we  associate with each $l \in \cL_{N,L}$ the path $|l|=(|l_i|)_{i=0}^{N+1}$ (with $l_{N+1}=0$ by convention) and the path $M_l=\big(M_{l,i}\big)_{i=0}^{N+1}$  that links the middles of each stretch consecutively, i.e.,   $M_{l,0}=0$  and 
\be{droitmi}
M_{l,i}=l_1+\dots+l_{i-1}+\frac{l_i}{2},\quad i\in \{1,\dots,N\},
\ee
and $M_{l,N+1}=l_1+\dots+l_N$ (see Fig. \ref{fig:upper}). With the help of  \eqref{defti}, we let $\tilde M_l$ and $\tilde l$ be the time-space rescaled processes associated with $M_l$ and $l$
and one can easily check that 
\be{caract}
\textstyle \tilde \cE_l^+=\tilde M_l+\frac{|\tilde l |}{2}\quad \text{and}\quad \tilde \cE_l^-=\tilde M_l-\frac{|\tilde l|}{2}.
\ee
As a consequence, proving Theorem \ref{extph} is equivalent to proving that 
\be{cvp}
\widehat Q_{L,\beta}\xrightarrow[L\to \infty]{d}  \sigma_{\beta} \big(B_s,0\big)_{s\in [0,1]},
\ee
where $\widehat Q_{L,\beta}$ is the law of $
\sqrt{N_l} \big(\tilde M_l(s), |\tilde
l(s)|\big)_{s\in [0,1]}$ with $l$ sampled from 
$ P_{L,\beta}$.


%
%

\subsubsection{\bf The collapsed phase ($\beta>\beta_c$)}
The collapsed regime was studied in \cite{CNGP13}, where a particular decomposition of the path into \emph{beads}
has been introduced. A bead is a succession of non-zero vertical stretches with alternating signs which ends when two consecutive 
stretches have the same sign (or when a stretch is nul). Such a decomposition is meaningful geometrically and we proved in 
\cite[Theorem C]{CNGP13} that there is a unique macroscopic bead in the collapsed regime and that the number of monomers outside this bead
are at most of order $(\log L)^4$.   

The next step, in the geometric description of the path, consisted in determining the limiting shapes of the envelopes of this unique bead. This has been achieved in \cite{CNGP13} where we proved  that 
 the rescaled upper envelope (respectively lower envelope) converges in probability towards a \emph{deterministic Wulff shape} $\gamma^*_{\beta}$ (resp. $-\gamma^*_{\beta}$) 
 defined as follows
 \be{defwulf}
 \gamma^*_{\beta}(s)=\int_0^s L' \big[(\tfrac{1}{2}-x) \tilde h_0(\tfrac{1}{a(\beta)^2},0) \big] dx,\quad s\in [0,1],
 \ee
 where $L$ is defined in \eqref{defL} and $\tilde h_0$ in \eqref{eq:deftil}.
 Thus, we obtained
 \bt{Convenv}{\rm (\cite{CNGP13} Theorem E)} For $\beta>\beta_c$  and $\gep>0$,
\begin{align}\label{conven}
\nonumber \lim_{L\to \infty} P_{L,\beta}\Big(  \big\|\tilde\cE^+_{l}-\frac{\gamma^*_{\beta}}{2}\big\|_{\infty} >\gep \Big)&=0,\\
\lim_{L\to \infty} P_{L,\beta}\Big(  \big\|\tilde\cE^-_{l}+\frac{\gamma^*_{\beta}}{2}\big\|_{\infty} >\gep \Big)&=0.
\end{align}
\et
 
%
%
%
%

This Theorem has also been stated a a \emph{Shape Theorem} in
\cite{CNGP13}. 
The natural question that comes to the mind is : are we able to
identify the fluctuations around this shape ? 
For technical reasons
that will be discussed in Remark \ref{rtech} below, we are not able to
identify such a limiting distribution. However, we can prove a close
convergence result by working on a particular mixture of those measures $P^{}_{L',\beta}$ for $L'\in K_L:=L+[-\gep(L),\gep(L)]\cap \N$ with $\gep(L):=(\log L)^6$. Thus,  we define the extended set of trajectories  $\widetilde \Omega_L=\cup_{L'\in K_L} \Omega_{L'}$, and we let $\widetilde{P}_{L,\beta}$ be a mixture of those $\big\{P_{L',\beta}, \, L'\in K_L\big\}$ defined by 
\be{rp111}
\widetilde{P}_{L,\beta}\big(\cdot |\, \Omega_{L'}\big )= P_{L',\beta} (\cdot)\quad  \text{and}  \quad 
\widetilde{P}_{L,\beta}\big(\Omega_{L'}\big )=\frac{\widetilde{Z}_{L',\beta}}{\sum_{k\in K_L}  \widetilde{Z}_{k,\beta}}, \quad \text{for} \ L'\in K_L,
\ee
where we recall \eqref{defZti}. In other words,
$\widetilde{P}_{L,\beta}$ can be defined as
\be{defPtil}
\widetilde{P}_{L,\beta}\big(l \big )=\sum_{L'\in K_L} \frac{\widetilde Z_{L',\beta} }{\sum_{k\in K_L} \widetilde{Z}_{k,\beta}}\, P_{L',\beta} (l) \, \ind_{\{l\in \Omega_{L'}\}}, \quad  \text{for}\  l \in \widetilde{\Omega}_L.
\ee
We denote by $\tilde Q_{L,\beta}$ the law of the fluctuations of the envelopes around their limiting shapes, that is the law of the
random processes
\be{rp}
\sqrt{N_l}\,  \Big(\tilde\cE^+_{l}(s)-\tfrac{\gamma^*_{\beta}(s)}{2}, \tilde\cE^-_{l}(s)+\tfrac{\gamma^*_{\beta}(s)}{2}\Big)_{s\in [0,1]}
\ee
where $l$ is sampled from $\tilde P_{L,\beta}$ as $L\to \infty$.  We obtain the following limit. 
%
\begin{theorem}[Fluctuations of the convex envelopes around the Wulff shape]\label{Oscill}
For  $\beta>\beta_c$, and $H=\tilde{H}(q_\beta,0),
q_\beta=\unsur{a(\beta)^2}$ we have the convergence in
distribution 
\be{cv}
\tilde Q_{L,\beta}\xrightarrow[L\to \infty]{d} \Big(\xi_H+\frac{\xi^c_H}{2}, \xi_H-\frac{\xi^c_H}{2}\Big),
\ee
where for $H=(h_0,h_1)$ such that $\etc{h_0,h_0+h_1} \subset D=(-\beta/2,\beta/2)$, the process  $\xi_H=(\xi_H(t), 0\leq t\leq 1)$ is centered
and Gaussian with covariance 
$$\esp{\xi_H(s),\xi_H(t)} = \int_0^{s\wedge t} L''((1-x) h_0 + h_1)\,
dx,$$
and 
where $\xi_H^c:=(\xi^c_H(t), 0\leq t \leq 1)$
is a process independent of $\xi_H$ which has the law of $\xi_H$
conditionally on $\xi_H(1)=\int_0^1\xi_H(s)\, ds = 0$. 
\end{theorem}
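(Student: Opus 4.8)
The plan is to reduce the statement to a conditional central limit theorem for the increments of the envelopes of the unique macroscopic bead, conditioned on fixing its horizontal extension and total length. Recall from the random walk representation \eqref{partfun} that, once we condition on the horizontal extension $N$ and on the geometric area $G_N(V)=L-N$, the upper and lower envelopes of a single bead are encoded by the auxiliary walk $V$ together with the signs of the stretches, and the energy is linear in the stretch overlaps. The key structural fact, already exploited in \cite{CNGP13}, is that under the polymer measure restricted to one bead the joint law of the stretches $(l_n)_{n=1}^N$ is that of a (tilted) Markov chain whose one-step transition can be described through the function $L$ defined in \eqref{defL}: conditionally on the horizontal extension $N\sim a(\beta)\sqrt L$, the rescaled profile and middle line behave like partial sums of a triangular array with slowly varying conditional law, the $x$-th increment having conditional cumulant generating function governed by $L$ evaluated at the tilt $(1-x)h_0+h_1$. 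Differentiating twice at the relevant tilt produces exactly the covariance kernel $\int_0^{s\wedge t}L''((1-x)h_0+h_1)\,dx$ appearing in the statement.

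First I would set up the one-bead decomposition: using \cite[Theorem C]{CNGP13}, replace $\widetilde P_{L,\beta}$ by the one-bead measure up to an error that is negligible for the $C([0,1])$-topology, since the monomers outside the macroscopic bead number at most $(\log L)^4 \ll \epsilon(L)^{1/2}$. Then I would condition on $N_l=N$; by Theorem~\ref{pfa2}(3) (and the sharpened LDP in Remark~\ref{recons}(3)) it suffices to treat $N = a(\beta)\sqrt L (1+o(1))$, and the mixing over $L'\in K_L$ together with conditioning on $N$ leaves a residual ``free'' area $L-N-\sum|l_n|$ that, thanks to the width $\epsilon(L)=(\log L)^6$ of $K_L$, can be distributed so as to decouple the area constraint — this is precisely the technical role of the mixture and the reason the cleaner result holds under $\widetilde P_{L,\beta}$ rather than $P_{L,\beta}$. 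Next I would identify, via the Hamiltonian's bilinear structure and the tilting computations of \cite{CNGP13} (equations \eqref{eq:LLam}, \eqref{eq:deftil}, \eqref{defL}), the conditional law of the rescaled middle line $\tilde M_l$ and profile $|\tilde l|$ as a Gaussian process in the limit, using a Lindeberg/martingale CLT for the triangular array of stretches with the parameter $H=\tilde H(q_\beta,0)$, $q_\beta = 1/a(\beta)^2$ selected by the area and length constraints. The Wulff shape $\gamma^*_\beta$ in \eqref{defwulf} is exactly the law of large numbers centering $\int_0^s L'[(1/2-x)\tilde h_0(q_\beta,0)]\,dx$, so the fluctuations of $\tilde\cE_l^\pm$ around $\pm\gamma^*_\beta/2$ are, by \eqref{caract}, the fluctuations of $\tilde M_l \pm |\tilde l|/2$, i.e.\ of $\xi_H \pm \xi_H^c/2$ where $\xi_H$ captures the middle-line fluctuations and $\xi_H^c$ the profile fluctuations; the conditioning $\xi_H(1)=\int_0^1\xi_H=0$ encodes the pinning $l_1+\dots+l_N=0$ and the fixed-area constraint respectively, and independence of the two follows from the odd/even symmetry of the tilted stretch law (the profile being a function of the $|l_n|$, the middle-line fluctuation orthogonal to it at the Gaussian level).

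The main obstacle will be the tightness and the decoupling of the two constraints (vertical return to $0$ and fixed area) while simultaneously controlling the Gaussian limit uniformly in the slowly-varying conditional law of the increments. Concretely: (i) proving tightness of $\sqrt{N_l}(\tilde\cE_l^+ - \gamma^*_\beta/2, \tilde\cE_l^- + \gamma^*_\beta/2)$ in $C([0,1])^2$ requires moment bounds on increments of the tilted stretch chain that are uniform over the range of tilts $(1-x)h_0+h_1$, $x\in[0,1]$, which stay in the compact $[h_0,h_0+h_1]\subset D$; (ii) handling the double conditioning — on $N_l$ and on the area — cannot be done by a naive local CLT because the area is of order $L$ while fluctuations are of order $L^{1/2}\cdot L^{1/4}$, so one must use the mixture over $K_L$ to absorb the area defect, and then justify that conditioning the limiting Gaussian field on the linear functionals $\xi_H(1)=0$ and $\int_0^1\xi_H=0$ is the correct limiting description (a continuity/Gaussian-conditioning argument, combined with a quantitative local CLT for the relevant bridge, in the spirit of \cite{CNGP13} but now at the level of the second-order fluctuations). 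I expect step (ii) — the joint local-limit/bridge estimate that turns the two constraints into Gaussian conditioning — to be the technically heaviest part, and Remark~\ref{rtech} presumably explains why it forces the passage to $\widetilde P_{L,\beta}$ instead of $P_{L,\beta}$.
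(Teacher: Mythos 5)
Your proposal follows essentially the same route as the paper: pass to the middle line $\tilde M_l$ and profile $|\tilde l|$ via \eqref{caract}, work under the Dobrushin--Hryniv exponential tilting at $H^q_N$ so that the increments of the auxiliary walk form a triangular array whose cumulant generating function is governed by $\Ll((1-x)h_0+h_1)$, prove finite-dimensional convergence plus tightness, and obtain the decorrelation of $\tilde M$ and $\tilde V$ from the alternating-sign cancellation (which is exactly what the paper records as the vanishing cross Hessian term $\frac1N\sum_i(-1)^{i+1}L''(\cdot)\to 0$).

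Two points where you are imprecise about the mechanism, and the second is a genuine gap in your plan. First, the finite-dimensional step is a genuine \emph{local} CLT proved by Fourier inversion (Theorem~\ref{lltilted} and Lemma~\ref{refine5}), not a Lindeberg/martingale CLT: you condition on events whose probability decays polynomially and must divide by it, so you need quantitative point-mass asymptotics for $\bP_\beta(\cdot\mid V_N=0, A_N(V)=qN^2)$, uniform in $q$ over compacts. Second, the raison d'\^etre of the mixture $\widetilde{P}_{L,\beta}$ is not to ``absorb a residual area defect'' in the soft sense you describe. The polymer measure conditions the auxiliary walk on a prescribed \emph{geometric} area $G_N(V)=L-N$, and no local CLT is available under geometric-area conditioning; the Dobrushin--Hryniv machinery only gives one under \emph{algebraic}-area conditioning $A_N(V)=qN^2$. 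In the collapsed phase $G_N(V)-|A_N(V)|=O((\log L)^4)$ with high polymer probability (Lemma~\ref{refine3}), and mixing over $L'\in K_L$ with window $(\log L)^6\gg(\log L)^4$ is precisely what lets one sum out this discrepancy and replace the geometric-area constraint by an algebraic-area one (Steps~1--2 of Section~\ref{pr1}, via Claims~\ref{deuxi} and~\ref{premie}). Without this observation your step~(ii) stalls: there is no bridge local CLT to invoke for the geometric area directly.
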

From Theorem \ref{Oscill} we deduce that the fluctuations of both envelopes around their limiting shapes are of order $L^{1/4}$.
\medskip

\begin{remark}\label{rtech}\rm
The reason why we prove Theorem \ref{Oscill} under the mixture $\tilde P_{L,\beta}$ rather than $P_{L,\beta}$ is the following.  We need to establish a local limit theorem for the associated random walk of law $\bP_\beta$
conditionned on having a large geometric area $G_N(V)$ and we are unable to do it. Fortunately, we know how to condition the random walk on having a large algebraic area $A_N(V)$ and under the mixture $\tilde P_{L,\beta}$ we are able to compare quantitatively these two conditionings (see Step 2 of the proof of Proposition \ref{llt} in Section \ref{pr1}).

In the construction of the mixture law $\tilde P_{L,\beta}$ (cf. \eqref{defPtil}), the choice of the prefactors of those $P_{L^{'},\beta}$ with $L^{'} \in K_L$ may appear
artificial. However it is conjectured (see e.g. \cite[Section 8]{GT}) that our inequalities in Theorem \ref{pfa} (3) can be improved into
$$ \tilde{Z}_{L,\beta}\sim \frac{B}{L^{3/4}} e^{\tilde{G}(a(\beta)) \sqrt{L}} \quad \text{with}\ B>0,$$
so that the ratio of any two prefactors would converges to $1$ as 
$L\to \infty$ uniformly on the choice of the indices of the two prefactors in $K_L$. In other word, $\tilde P_{L,\beta}$ should, in first approximation, be  the uniform 
mixture of those $\{P_{L',\beta},\, L'\in K_L\}$.

\end{remark}

\begin{remark}\label{remalt}\rm
As for the extended phase with Theorem \ref{extph}, it will be easier to work with the middle line $\tilde{M}_l$ and with 
the profile $|\tilde l |$ defined in (\ref{droitmi}-\ref{caract}).
As a consequence, proving Theorem \ref{Oscill} is equivalent to proving that 
\be{cvpp}
\widehat Q_{L,\beta}\xrightarrow[L\to \infty]{d} \big(\xi_H,\xi^c_H\big),
\ee
where $\widehat Q_{L,\beta}$ is the law of $
\sqrt{N_l} \big(\tilde M_l(s), |\tilde
l(s)|-\gamma^*_{\beta}(s)\big)_{s\in [0,1]}$ with $l$ sampled from 
$\tilde P_{L,\beta}$. The convergence of $\tilde M_l$ in \eqref{cvpp} answers an open question
raised in \cite[Fig. 14 and Table II]{POBG93} where the process is referred to as the \emph{center-of-mass walk}.
\end{remark}

\subsection{Discussion and open problems}\label{disc}

Giving a path characterization of the phase transition is an important issue for polymer models in Statistical Mechanics. From that point of view, identifying in each regime the limiting distribution of the  whole path rescaled in time by its total length $N$ and in space by $\sqrt{N}$ is challenging and meaningful. 
This was studied   in \cite{DGZ05} and \cite{CGZ06} for $(1+1)$-dimensional wetting models
which  deal with a $N$-step  random walk (with continuous or discrete increments)  conditioned to remain non-negative and receiving an energetic reward $\gep$  every time it touches the $x$-axis (which plays the role of a hard wall). 
Such models exhibit a pinning transition at some  $\gep_c$ such that 
when $\gep>\gep_c$ the polymer is  \emph{localized}, meaning that the path typically  remains at distance $O(1)$ from the wall. Thus, the  rescaled path converges to the null function. 
When $\gep<\gep_c$ in turn, the polymer is \emph{delocalized} and visits the origin only $O(1)$ times. Then, the rescaled path  
converges towards a Brownian meander if it is  constrained to come back to the origin at its right extremity 
and converges towards a normalized Brownian excursion otherwise.  Finally, the critical regime $\gep=\gep_c$ is characterized by a number of contacts between the polymer and the $x$-axis that grows as $\sqrt{N}$. The rescaled 
path converges to a reflected Brownian motion when there is no constraint on its right extremity and towards a reflected Brownian bridge otherwise. We note finally that similar results have been obtained in \cite{S14} when the pinning of the path occurs  at a  layer of finite width  on top of the hard wall. 

Before  comparing the infinite volume limit description of the wetting transition with that of the 
collapse transition, 
 let us insist on the fact that the nature of these two phase transitions  are fundamentally different and this can be explained in a few words. 
For the wetting model, the saturated phase for which the free energy is trivial (=0)
corresponds to the polymer being fully delocalized off the interface which means that entropy completely takes over in the energy-entropy competition
that rules such systems. For the IPDSAW in turn, the saturated phase is characterized by a domination of trajectories that 
are maximizing the energy. In other words, we could say that both models display a saturated phase 
which in the pinning case is associated with a maximization of the entropy, whereas it is associated with a maximization of the energy for the polymer collapse.

 As a consequence, only the extended regime of the IPDSAW and the 
localized regime of the wetting model may be compared. 
In both cases, one can indeed decompose the trajectory into simple patterns, that do not interact with each other and are typically of finite length,
i.e., the excursions off the $x$-axis for the wetting model and the pieces of  path in between two consecutive vertical stretches of length $0$ for the IPDSAW: these patterns can 
be seen as independent building blocks of the path and can be associated with a positive recurrent renewal. However the comparison can not be brought any further, since even in this regime the envelopes of the 
IPDSAW display a Brownian limit whereas the limiting object is the null function for the wetting model.

Due to the convergence of both envelopes towards deterministic Wulff shapes, the collapsed IPDSAW  may be related to other models in Statistical Mechanics that are known to undergo convergence of interfaces 
towards deterministic Wulff shapes. This is the case for  instance when considering a 2 dimensional  bond percolation model in its percolation regime and  conditioned on the existence of an open  curve of the dual graph around the origin with 
a prescribed and large area enclosed inside the curve (see \cite{A01}).   A similar interface appears when considering  the $2$-dimensional Ising model in a big square box of size $N$ at low temperature with no external field and $-$ boundary conditions and when conditioning the total magnetization to deviate from its 
average (i.e., $-m^* N^2$ with $m*>0$) by a factor  $a_N\sim N^{4/3+\delta}$ ($\delta>0$). It has been proven in  \cite{DKS92}, \cite{I94}, \cite{I95}  and \cite{IS98}  that such a deviation    is typically due to a unique large droplet of $+$, whose boundary converges to a deterministic Wulff shape once 
rescaled by $\sqrt{a_N}$.

However, the closest relatives to the collapsed IPDSAW are probably  the 1-dimensional SOS model with a prescribed large area below the interface (see \cite{DH96}) and the 2-dimensional Ising interface separating the $+$ and $-$ phases in a
vertically infinite layer of finite width (again with a large area underneath the interface, see \cite{DH97}). For both models and in size $N\in \N$, the law of the interface  can be related to the law of an underlying random walk $V$ conditioned on describing an abnormally large \emph{algebraic} area ($q N^2$ with $q>0$). As a consequence, once rescaled in time and space by $N$ the interface converges in probability towards 
a Wulff shape, whose formula depends on $q$ and on the random walk law.  The fluctuations of the interface around this deterministic shape  are of order $\sqrt{N}$  and their limiting distribution is identified in \cite[theorem 2.1]{DH96} for SOS model and in \cite[theorem 3.2]{DH97}  for Ising interface at sufficiently low temperature. The proofs in \cite{DH96} and \cite{DH97} use 
an ad-hoc tilting of the random walk law (described in Section \ref{sec:lcltp}), so that the large area becomes typical under the tilted law. In this framework, a local limit theorem can be derived for any finite dimensional distribution of $V$ under
the tilted law.

In the present paper, our system also enjoys a random walk representation (see section \ref{sec:rep}) and we will use the "large area"   tilting of the random walk law as well to prove Theorem \ref{sec:rep}. However, our model displays three particular features 
that prevent  us from  applying the results of \cite{DH96} straightforwardly. 
First, the conditioning on the auxiliary random walk $V$ is, in our case, related to the \emph{geometric} area below the path 
rather than to the \emph{algebraic} area (see Remark \ref{rtech}). 
Second, the horizontal extension of an IPDSAW path fluctuates, which is not the case for SOS model. Thus,  the ratio $q$
of the area below the path divided by the square of its horizontal extension fluctuates as well, which forces us to 
display some uniformity in $q$ for every local limit theorem we state in Section \ref{sec:oscill}.  Third and last, the fact that an IPDSAW path is characterized by two envelopes 
makes it compulsory to study simultaneously the fluctuations of $V$ around the Wulff shape and the fluctuations of $M$ around the $x$-axis (recall \eqref{cvpp}). 
We recall that the increments of $M$ are obtained by switching the sign of every second increment of $V$. 
As a consequence, we need to adapt, in Section \ref{sec:lcltp}, the proofs of the finite dimensional convergence and of the tightness displayed in \cite{DH96}.

Let us conclude with the critical regime of IPDSAW that is studied in Section \ref{sec:critic}.  The random walk representation described in Section \ref{sec:rep} and the 
fact that $\Gamma_\beta=1$ when $\beta=\beta_c$ tells us that the horizontal extension of the path has the law of 
the stopping time  $\tau_L:=\min \{N\geq 1\colon G_N(V)\geq L-N\}$ when $V$ is sampled from $\bP_\beta(\cdot\, |\, V_{\tau_L}=0, G_{\tau_L}(V)=L-\tau_L)$. 
Studying the scaling of $\tau_L$ requires to build up a renewal process based on the successive  excursions made by  $V$ inside the lower half-plane  or inside the upper half-plane. 
A sharp local limit theorem is therefore required
for the area enclosed in between such an excursion and the $x$-axis and this is precisely the object of a recent paper by Denisov, Kolb and  Wachtel in \cite{DKW13}.  Note that,  the fact that the horizontal extension fluctuates
makes the scaling limits of the upper and lower envelopes of the path much harder to investigate at $\beta=\beta_c$.
As soon as 
the limiting distribution of the rescaled horizontal extension is not constant, which is the case for the critical IPDSAW,  
one should indeed consider the limiting distribution of  the rescaled horizontal extension and of $\tilde V$ and $\tilde M$ simultaneously. For instance, the asymptotic decorelation of $\tilde M$ and $\tilde V$ may well not be true anymore. For this reason, we will state the investigation of the limiting distribution of the upper and lower envelopes of the critical IPDSAW as an open problem.
Let us conclude by pointing out that the critical regime of a Laplacian (1+1)-dimensional pinning model that is investigated by Caravenna and Deuschel in \cite{CD09} has somehow a similar flavor. More precisely, when the pinning term $\gep$ is switched off, the path 
can be viewed as the bridge of an integrated random walk and therefore scales like $N^{3/2}$. This scaling persists until
$\gep$ reaches a critical value $\gep_c$.  At criticality,   
and once rescaled in time by $N$ and in space $N^{\frac32}/\log^{\frac52}(N)$ the path is seen as the density of a signed measure $\mu_N$ 
on $[0,1]$. Then, $\mu_N$ that is build with a path sampled from the polymer measure, converges in distribution  towards a random atomic  measure on $[0,1]$. The atomes of the limiting distribution are generated by the longest excursions of the integrated  
walk, very much in the spirit of the limiting distribution in Theorem \ref{pfa2} (2) where each contribution to the sum constituting the limiting distribution is associated to a long excursion of the auxiliary random walk.
\subsubsection{\bf Computer Simulations} As explained in the Appendix~\ref{sec:appendice}, the
representation formula \eqref{partfun} provides an exact simulation
algorithm for the law of a path under the polymer measure
$P_{\beta,L}$. However, this algorithm is very efficient only for
$\beta=\beta_c$, and looses all efficiency when $\beta$ is not close
to $\beta_c$.
\subsubsection{\bf Open problems}
\begin{itemize} 
\item Find the scaling limit of the envelopes of the path in the critical regime.
\item Establish the fluctuations of the envelopes around the Wulff shapes (Theorem \ref{Oscill})
for the true polymer measure $P_{L,\beta}$ rather that for the mixture $\tilde P_{L,\beta}$.
\item Establish a Central Limit Theorem and a Large Deviation Principle for the horizontal extensions in the collapsed
and extended regimes.
\item Devise a dynamic scheme of convergence of measures on paths such
  that the equilibrium measure is the polymer measure, and with a
  sharp control on the mixing time to equilibrium similar to the one
  devised for S.O.S by Caputo, Martinelli and Toninelli in \cite{CapMarTon12}.

\end{itemize}

\section{Preparation}\label{sec:prep}
We begin this section by recalling  the proof of the probabilistic representation of the partition function (recall \ref{partfun}). This proof was already displayed in \cite{NGP13}  but since it constitutes  the starting point of our analysis it is worth reproducing  it here briefly. Moreover, we obtain as a by product the auxiliary random walk $V$ of law $\bP_\beta$, which, under an appropriate conditioning, 
can be used to derive some path properties under the polymer measure.
In Section \ref{sec:lcltp}, we recall how to work with the random walk $V$ (of law $\bP_\beta$)  conditioned on describing an abnormally large algebraic area. 
To that aim, we introduce a strategy initially displayed in \cite{DH96} and subsequently used in \cite{CNGP13} which consists in tilting  
$\bP_\beta$ appropriately such that the path typically describes a large area. This framework will be of crucial importance
to study the collapse regime.

\subsection{Probabilistic representation of the  partition function}\label{sec:rep}
 Let $\mathbf{P}_{\beta}$ be the law of the random walk $V:=(V_n)_{n\in\mathbb{N}}$ satisfying
$V_0=0$, $V_n=\sum_{i=1}^n U_i$ for $n\in\mathbb{N}$ and $(U_i)_{i\in\mathbb{N}}$ is an i.i.d sequence of geometric increments, 
i.e.,  
\begin{equation}\label{lawP}
\mathbf{P}_{\beta}(U_1=k)=\tfrac{e^{-\frac{\beta}{2}|k|}}{c_{\beta}}\quad\forall k\in\mathbb{Z}\quad\text{with}\quad c_{\beta}:=\tfrac{1+e^{-\beta/2}}{1-e^{-\beta/2}}.
\end{equation}
For $L\in \N$ and $N\in \{1,\dots,L\}$ we recall that  
$$\cV_{N,L}:=\{V\colon G_N(V)=L-N, V_{N+1}=0\} \quad \text{with} \quad  G_N(V)=\textstyle \sum_{i=0}^{N} |V_i|$$
 and (see Fig. \ref{fig:transform})
we denote by $T_N$ the  one-to-one correspondence that maps 
$\cV_{N+1,L-N}$ onto $\cL_{N,L}$ as 
\be{defTN}
T_N(V)_i=(-1)^{i-1} V_i\quad  \text{for all} \quad i\in \{1,\dots N\}.
\ee
%
%

Coming back to the proof of \eqref{partfun} we recall (\ref{defLL}--\ref{pff}) and we note that the $\tilde{\wedge}$ operator can be written as
\begin{equation}
x\;\tilde{\wedge}\;y=\left(|x|+|y|-|x+y|\right)/2,\quad\forall x,y\in\mathbb{Z}.
\end{equation}
Hence, for $\beta>0$ and $L\in\mathbb{N}$, the partition function in \eqref{pff}  becomes
\begin{align}\label{ls}
\nonumber Z_{L,\beta}
&=\sum_{N=1}^{L}\sum_{\substack{l\in\mathcal{L}_{N,L}\\l_0=l_{N+1}=0}}\exp{\Bigl(\beta\sum_{n=1}^N{|l_n|}-\tfrac{\beta}{2}\sum_{n=0}^N{|l_n+l_{n+1}|}\Bigr)}\\
&=c_\beta\, e^{\beta L} \sum_{N=1}^{L}\left(\tfrac{c_\beta}{e^\beta}\right)^N\sum_{\substack{l\in\mathcal{L}_{N,L}
\\ l_0=l_{N+1}=0}}\prod_{n=0}^{N}\frac{\exp{\Bigl(-\tfrac{\beta}{2}|l_n+l_{n+1}|\Bigr)}}{c_\beta}.
\end{align}

Then, since for $l\in \cL_{N,L}$  the increments $(U_i)_{i=1}^{N+1}$ of $V=(T_N)^{-1}(l)$  in \eqref{defTN} necessarily satisfy $U_i:=(-1)^{i-1}(l_{i-1}+l_i)$, one can 
rewrite  \eqref{ls} as
\begin{equation}\label{tgh}
Z_{L,\beta}=c_\beta e^{\beta L} \sum_{N=1}^{L} \left(\tfrac{c_\beta}{e^\beta}\right)^N \sum_{V\in \cV_{N+1,L-N}} \mathbf{P}_{\beta}(V),
\end{equation}
which immediately implies \eqref{partfun}. 
A useful consequence of formula \eqref{tgh} is that, once conditioned on taking a given number of horizontal steps $N$, the polymer measure is exactly the image measure by the $T_N-$transformation of the geometric random walk $V$ conditioned to return to the origin after N+1 steps and 
to make a geometric area $L-N$, i.e.,  
\begin{equation}\label{transf2}
P_{L,\beta}\bigl(l\in\cdot \mid N_l=N\bigr)=\mathbf{P}_\beta\bigl(T_N(V)\in\cdot \mid V_{N+1}=0, G_N=L-N\bigr).
\end{equation}

\medskip

\begin{figure}[ht]\center
	\includegraphics[width=.65\textwidth]{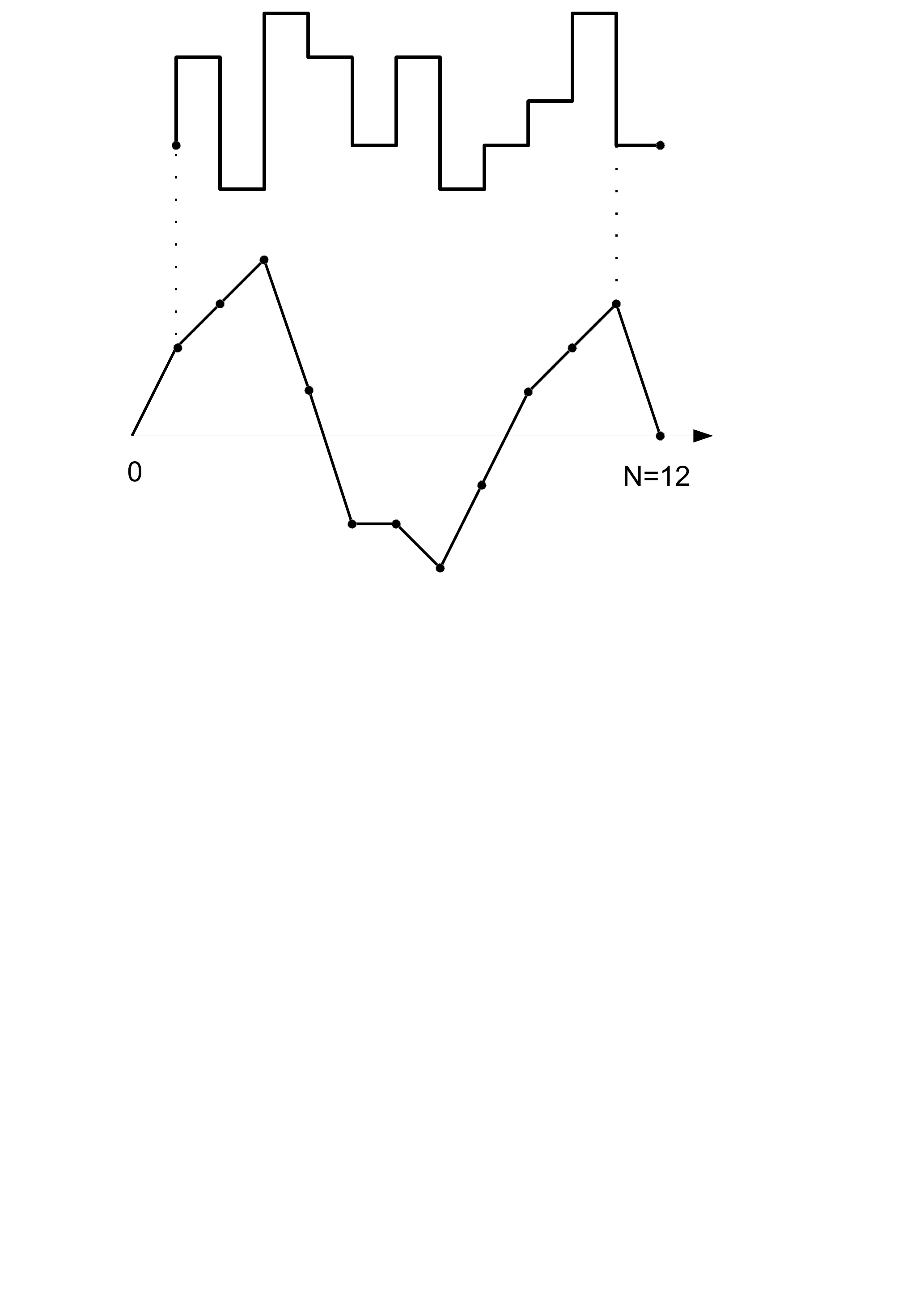}
	\vspace{-8cm}
	\caption{An example of a trajectory $l=(l_i)_{i=1}^{11}$ with $l_1=2, l_2=-3, l_4=4$\dots is drawn on the  upper picture.  The auxiliary random walk $V$ associated with $l$, i.e., $(V_i)_{i=0}^{12}=(T_{11})^{-1}(l)$  is drawn on the lower picture with 
	$V_1=2, V_2=3, V_4=4\dots$}
	\label{fig:transform}
\end{figure}

\medskip

\subsection{Large deviation estimates}\label{sec:lcltp}
In this section, we  introduce  an \textit{exponential tilting} of the probability measure $\mathbf{P}_\beta$ (through the Cramer transform), in order to study $\bP_\beta$ conditioned on the large deviation event $\{A_n(V)=q n^2, V_n=0\}$. Following  Dobrushin and Hryniv in \cite{DH96}, for $n\in\mathbb{N}$, we use
\begin{equation}\label{eq:Yn}
\tfrac{1}{n} A_n(V):=\tfrac{1}{n}(V_0+V_1+\dots+V_{n-1}),
\end{equation}
 Under the tilted probability measure the large deviation event $\{A_n=n^2 q,\,V_n=0\}$ becomes typical.
First, we denote by $L(h),h\in\mathbb{R}$ the logarithmic moment generating function of the random walk $V$, i.e,
\begin{equation}\label{defL}
\Ll(h):=\log\mathbf{E}_\beta[e^{h U_1}].
\end{equation}
From the definition of the law $\mathbf{P}_\beta$ in \eqref{lawP}, we obviously have $L(h)<\infty$ for all $h\in(-\beta/2,\beta/2)$. 
For the ease of notations, we set $\Lambda_n:=(\frac{A_n}{n},V_n)$ and we denote its logarithmic moment generating function by $\Llamn(H)$ for $H:=(h_0,h_1)\in\mathbb{R}^2$, i.e.,
\begin{equation}\label{eq:LlambdaN}
{\textstyle \Llamn(H):=\log\mathbf{E}_\beta\bigl[e^{h_0\frac{A_n}{n}+h_1V_n}\bigr]=\sum_{i=1}^n \Ll\Bigl(\bigl(1-\tfrac{i}{n}\bigr)h_0+h_1\Bigr).}
\end{equation}
Clearly, $\Llamn(H)$ is finite for all $H\in \cD_n$ with
\begin{equation}
{\textstyle\mathcal{D}_n:=\Bigl\{(h_0,h_1)\in\mathbb{R}^2\colon h_1\in\bigl(-\tfrac{\beta}{2},\tfrac{\beta}{2}\bigr),\ (1-\tfrac{1}{n})h_0+h_1\in\bigl(-\tfrac{\beta}{2},\tfrac{\beta}{2}\bigr)\Bigr\}.}
\end{equation}
We also introduce $\Llam$ the continuous counterpart of $L_{\Lambda_n}$ as
\begin{equation}\label{eq:LLam}
{\textstyle \Llam(H):=\int_0^1 \Ll(xh_0+h_1)dx,}
\end{equation}
which is defined on
\begin{equation}\label{eq:Llambda}
{\textstyle\mathcal{D}:=\Bigl\{(h_0,h_1)\in\mathbb{R}^2\colon h_1\in\bigl(-\tfrac{\beta}{2},\tfrac{\beta}{2}\bigr),\ h_0+h_1\in\bigl(-\tfrac{\beta}{2},\tfrac{\beta}{2}\bigr)\Bigr\}.}
\end{equation}
With the help of \eqref{eq:LlambdaN} and for $H=(h_{0},h_{1})\in\mathcal{D}_n$, we define the $H$-tilted distribution by
\begin{equation}\label{eq:defH}
\frac{\text{d}\mathbf{P}_{n,H}}{\text{d}\mathbf{P}_{\beta}}(V)=e^{h_{0}\frac{A_n}{n}+h_{1}V_n-\Llamn(H)}.
\end{equation}
For a given $n\in \N$ and $q\in \frac{\N}{n}$, the exponential tilt is
given by $H_n^q:=(h_{n,0}^q,h_{n,1}^q)$ which, by Lemma 5.5 in Section
5.1 of \cite{CNGP13}, is the unique solution of  
\begin{equation}\label{eq:tildeEC}
\mathbf{E}_{n,H}(\tfrac{\Lambda_n}{n})=\nabla\bigl[\tfrac{1}{n}\Llamn\bigr](H)=(q,0),
\end{equation}

An important feature of this exponential tilting is that $P_{n,H_n^q}=\otimes_{i=1}^n \mathbf{\nu}_{\,h_{n}^{i}}$ with 
$h_n^i=(1-\frac{i}{n}) h_{n,0}^q+h_{n,1}^q$. Consequently, under $P_{n,H_n^q}$, the random walk increments
$(U_i)_{i=1}^n$ are independent and for each $j\in \{1,\dots,n\}$,  the law of $U_j$ is 
$\nu_{\,h_j^n}$ where
\begin{equation}\label{defnu}
\frac{\text{d} \nu_{\,h}}{\text{d}\mathbf{P}_{\beta}}(v_1)=e^{h v_1-\Ll(h)}.
\end{equation}

\noindent Then, we define the continuous counterpart of $H_n^q$ by 
 $\tilde{H}(q,0):=(\tilde{h}_0(q,0),\tilde{h}_1(q,0))$ which is  the unique solution of the equation
\begin{equation}\label{eq:deftil}
\nabla \Llam(H)=(q,0),
\end{equation}
and we state a Proposition that allows us to remove the $n$ dependence of the exponential decay rate.

\section{Scaling Limits in the extended phase}\label{extended}

In this section, we will first display an alternative representation
of the model in terms of an auxiliary regenerative process. Lemmas
\ref{compfhatftilde} and \ref{dishexpattern} are linking the original
model and the regenerative process. The proof of Lemma
\ref{compfhatftilde} is omitted for sake of conciseness. Section \ref{prt1} is dedicated to the proof of 
Theorems \ref{pfa} (1) and \ref{pfa2} (1) and of Theorem \ref{extph}.

\medskip

We shall restrict ourselves to paths of length $L$ whose last stretch has a zero vertical
length, i.e.,   $\Omega_L^c =\ens{l \in
  \Omega_L : l_{N_l}=0}$. Note that the natural one-to-one correspondence between $\Omega_L$
and $\Omega_{L+1}^c$ conserves the Hamiltonian and therefore, proving Theorem \ref{pfa} (1) or \ref{pfa2} (1) or Theorem 
\ref{extph} with the constraint immediately entails the same results without this constraint.

Let us define a \emph{pattern} as a path whose first zero length vertical stretch
occurs only at the end of the path. We shall decompose
a path into a finite number of patterns.  That is if for
$l\in\Omega_L^c$ we consider the successive indices corresponding to vertical stretches
of zero length, i.e.,  
$$ T_0=0, T_{k+1}(l) =\inf\ens{j \ge 1+ T_k : l_j=0}\,.$$
Then $\Nu_k=T_k -T_{k-1}$ is the horizontal extension of the $k$-th
pattern,  $S_k=\Nu_k + \valabs{l_{T_{k-1}+1}} + \cdots
+\valabs{l_{T_k}}$ is the length of the $k$-th pattern and $\Ju_k=l_{T_{k-1}+1}+\dots+l_{T_k}$ is the vertical displacement on the $k$-th pattern. If
$\pi_L(l)=r$ is the number of patterns, then the horizontal extension
is $N_l=\Nu_1 + \cdots + \Nu_r$, the total length is of course
$L= S_1 + \cdots + S_r$ and the total vertical displacement is $\Ju_1 + \cdots + \Ju_r$.

The key observation that will lead to the construction of the renewal
structure, is that  the Hamiltonian of the path is the
sum of the Hamiltonian of the patterns, since the separating two
horizontal steps prevent any interaction between the patterns.

Let us define the constrained excess partition function as
\be{defzcp} 
 \tilde{Z}^{c}_{L,\beta} := e^{-\beta L}
\sum_{l\in \Omega_L^c} \mathbf{P}_L(l)\,  e^{H_{L,\beta}(l)}.
\ee
We shall apply to $\tilde{Z}^{c}_{L,\beta} $ the probabilistic representation displayed in (\ref{ls}--\ref{tgh}).  The only additional constraint is that
$l_{N_l}=0$, which we translate immediately as the added constraint $V_N=0$ on the associated random walk $V$, i.e.,
\begin{align}\label{eq:impa}
\nonumber   \tilde{Z}^{c}_{L,\beta}&= c_\beta \sum_{N=1}^L (\Gamma_\beta)^N\, 
  \probbeta{G_N(V)=L-N, V_N=V_{N+1}=0}\\
&= \sum_{N=1}^L (\Gamma_\beta)^N\, 
  \probbeta{G_N(V)=L-N, V_N=0},
\end{align}
since 
$\probbeta{V_{N+1}-V_N=0}=\probbeta{V_1=0}=1/c_\beta$.
Accordingly the pattern excess partition function is defined as
\begin{align}\label{defzcpat} 
\nonumber\hat{Z}^{c}_{L,\beta} :&= \tilde{Z}^{c}_{L,\beta}(T_1(l)=N_l)= 
e^{-\beta L}
\sum_{l\in \Omega_L^c} \mathbf{P}_L(l)\,  e^{H_{L,\beta}(l)} \, 1_{\{T_1(l)=N_l\}}\\
&= \sum_{N=1}^L (\Gamma_\beta)^N\, 
  \probbeta{G_N(V)=L-N, T(V)=N}
\end{align}
where, for $V\in \Z^{\N_{0}}$ such that $V_0=0$, we  set
$T(V)=\inf\{i\geq 1\colon\, V_i=0\}$.
For the associated random walk trajectory $V$, the  vertical displacement is given by
 $Y_n(V):=\sum_{i=1}^n
(-1)^{i-1} V_i$ for $n\in \N$.

We will use the decomposition into patterns to generate an auxiliary renewal process, whose inter-arrivals are associated with the successive lengths of the patterns. Thus, it is natural to consider the series
\be{defphialpha}
\phi(\alpha) := \sum_{t\ge 1} \hat{Z}^{c}_{t,\beta}e^{-\alpha t} \in
]0,+\infty]\,
\ee
and the convergence abcissa
$\hat{f}(\beta) := \inf\ens{\alpha : \phi(\alpha) < +\infty}$. An important observation at this stage is the link between $\phi$ and $\tilde f(\beta)$ that is stated in the following lemma. 
\bl{compfhatftilde}
In the extended phase we have $0 < \hat{f}(\beta) <
\tilde{f}(\beta)$ and moreover $\phi(\tilde{f}(\beta))=1$.
\el

Lemma \ref{compfhatftilde} allows us to  define rigorously the renewal process. We even 
 enlarge the probability space on which this renewal process is defined to take into account the horizontal extension and the vertical displacement 
on each pattern.  We finally obtain an auxiliary regenerative process 
that will be the cornerstone of our study of the extended phase. To that aim, 
we let    $(\sigma_i,\nu_i,y_i)_{i\ge 1}$ be an IID sequence of random
variables of law $\PPbeta$. The law of $(\sigma_1,n_1,y_1)$ is given by 
\begin{itemize}\label{defpb}
\item $ \PPbeta(\sigma_1=s) = \hat{Z}^{c}_{s,\beta}\,  e^{-s
  \tilde{f}(\beta)}$, \quad for $s\geq 1\,.$\\ 
\item The conditional distribution of $\nu_1$ given $\sigma_1=s$ is (recall \eqref{defzcpat})
$$ \PPbeta(\nu_1=n\mid \sigma_1=s)=
\frac{1}{\hat{Z}_{s,\beta}^{c}} \, (\Gamma_\beta)^n\, \probbeta{G_n(V)=s-n,T(V)=n} \qquad(1\le n\le s)\,.$$
\item The conditional distribution of $y_1$ given $\sigma_1=s, \nu_1=n$ is 
$$ \PPbeta(y_1=t \mid \sigma_1=s,\nu_1=n)=
\probbeta{Y_n(V)=t \mid  G_n(V)=s-n,T(V)=n}
\qquad(t\in \Z).$$
\end{itemize}
\medskip

The link between the latter regenerative process and the polymer law is stated in Lemma \ref{dishexpattern} below.
We let  $\cT$ be  the set of renewal times associated to $\sigma$, i.e., $\cT=\{\sigma_1 + \cdots + \sigma_r,r\in \N\}$. 

\bl{dishexpattern}
Given integers $r, s_1, \ldots, s_r, n_1, \ldots, n_r,t_1,\dots,t_r$ such that
$s_i\ge 1$, $1\le n_i\le s_i$, $s_1 + \cdots + s_r=L$, we have
$$P_{L,\beta}^{c}\etp{(S_i,\Nu_i,\Ju_i)=(s_i,n_i,t_i), 1\le i\le r} =
\PPbeta\big((\sigma_i,\nu_i,y_i)=(s_i,n_i,t_i), 1\le i\le r\mid L\in \cT\big)\,.
$$
\el
\begin{proof}
We disintegrate $\tilde{Z}^{c}_{L,\beta}$ with respect to the number of patterns $r$ and to
$s_1, \ldots, s_r$ the respective lengths of these patterns:
\be{decompozcontraint}
\tilde{Z}^{c}_{L,\beta} = \sum_{r=1}^{L/2} \sum_{\substack{s_i\ge 1\\ s_1 +
  \cdots+s_r=L}} \prod_{i=1}^r \hat{Z}^{c}_{s_i,\beta}\,.
\ee
It is now folklore in Probability theory (see \cite[Chapter 1]{cf:Gia}, for an application of this technique to the linear pinning model) to multiply and divide the r.h.s. by $e^{L \tilde f(\beta)}$ and obtain
\be{tre}
\tilde{Z}^{c}_{L,\beta} = e^{\tilde f(\beta)  L}\,  \sum_{r=1}^{L/2} \sum_{\substack{s_i\ge 1\\ s_1 +
  \cdots+s_r=L}} \prod_{i=1}^r  \PPbeta(\sigma_1=s_i)=\PPbeta(L\in \cT) \, e^{\tilde{f}(\beta) L}.
\ee
We use the probabilistic representation of the partition function, and we let 
$\cL^{n_.,s_.,t_.}$ be the subset of $\Omega_L^c$ containing those configurations
forming $r$ patterns of respective horizontal extensions, lengths and  horizontal displacements $n_1,
  \ldots, n_r, s_1, \ldots ,s_r, t_1,\dots,t_r$. 
We obtain   
\begin{align*}
    P_{L,\beta}^{c} ((S_i,\Nu_i,&\Ju_i)=(s_i,n_i,t_i), 1\le i\le r)=
    \unsur{\tilde{Z}^{c}_{L,\beta}} c_\beta\,  (\Gamma_\beta)^{n_1 +
      \cdots + n_r} \sum_{l\in \cL^{n_.,s_.,t_.}} \prod_{i=0}^{n_1 + \cdots +n_r} \frac{e^{-
        \frac{\beta}{2} \valabs{l_i+ l_{i+1}}}}{c_\beta} 
        \end{align*}
and we  recall that  for $l\in   \cL^{n_.,s_.,t_.}$ and $i\in \{1,\dots,r\}$,  the vertical displacement on the $j$th pattern
is $l_{n_1+\dots+n_{i-1}+1}+\dots+l_{n_1+\dots+n_{i}}$.  For the associated random walk trajectory $V$, the  vertical displacement is given by
$(-1)^{n_1+\dots+n_{i-1}} Y_{n_i}(V)$. It remains to use the symetry of the $V$-random walk to get
 \begin{align*}       
 P_{L,\beta}^{c} & ((S_i,\Nu_i,\Ju_i)=(s_i,n_i,t_i), 1\le i\le r) \\
 &=\unsur{\tilde{Z}^{c}_{L,\beta}}  (\Gamma_\beta)^{n_1 +
      \cdots + n_r} \prod_{i=1}^r
    \probbeta{Y_{n_i}(V)=(-1)^{n_1+\dots+n_{i-1}} t_i, G_{n_i}(V)=s_i-n_i,   T(V)=n_i},\\
 &   =\unsur{\tilde{Z}^{c}_{L,\beta}}  (\Gamma_\beta)^{n_1 +
      \cdots + n_r} \prod_{i=1}^r
    \probbeta{Y_{n_i}(V)= t_i, G_{n_i}(V)=s_i-n_i,   T(V)=n_i}.
    \end{align*}
We multiply the numerator and the denominator by $e^{-\tilde{f}(\beta) L}$ and we use 
\eqref{defzcpat}, \eqref{tre} and the definition of $\PPbeta$ to get
 \begin{align*}   
  P_{L,\beta}^{c} ((S_i,\Nu_i,\Ju_i)=(s_i,n_i,t_i), 1\le i\le r)&= \unsur{\PPbeta(L\in\cT)}\prod_{i=1}^r \PPbeta((\sigma_1,\nu_1,y_1)=(s_i,n_i,t_i)).
  \end{align*}
\end{proof}

\subsection{Proof of Theorems \ref{pfa} (1), \ref{pfa2} (1) and of Theorem  \ref{extph}}\label{prt1}

The proof of Theorem \ref{pfa} (1) is a straightforward application of formula  \eqref{tre} and of the renewal Theorem which ensures us that $\lim_{L\to \infty} \PPbeta(L\in \tau)\to 1/\mu_\beta>0$ with $\mu_\beta:=\EEbeta(\sigma_1)$. The finiteness of $\mu_\beta$ is an easy consequence of the definition of $\PPbeta$ and of the fact that $\hat{f}(\beta)< \tilde{f}(\beta)$. 

The proof of \eqref{cvhex} (i.e., Theorem \ref{pfa2} (1)) is performed as follows. We let $\pi_L(l)$ be the \emph{number of patterns} in a sequence
$l\in\Omega_L^c$ (and thus $T_{\pi_L(l)}=N_l$). We set also $\pi_L(\sigma):=\max\{i\geq 1\colon\,\sigma_1+\dots+\sigma_i\leq L\}$, such that $\pi_L(\sigma)$ is the counterpart of $\pi_L(l)$ for the  renewal process associated with the interarrivals  $(\sigma_i)_{i\in \N}$. 
By Lemma \ref{dishexpattern}, \eqref{cvhex} will be proven once we show that, under $\PPbeta(\cdot\mid L\in \cT)$, we have 
\be{sqfun}
\lim_{L\to \infty} \frac{\nu_1 + \cdots +
  \nu_{\pi_L(\sigma)}}{L}=_{\text{Prob}}
\frac{\EEbeta(\nu_1)}{\mu_\beta},
\ee
 and therefore the quantity $e(\beta)$ in \eqref{cvhex} is given by  $\frac{\EEbeta(\nu_1)}{\mu_\beta}$.

To prove \eqref{sqfun}  we note that, under $\PPbeta$, a straightforward application of the law of large number gives the almost sure convergence of  $(\nu_1 + \cdots +
  \nu_{n})/n$  to $\EEbeta(\nu_1)$ and of $\pi_L(\sigma)/L$ to 
  $1/\mu_\beta$.  Thus $(\nu_1 + \cdots +
  \nu_{\pi_L(\sigma)})/L$ tends almost surely to $\EEbeta(\nu_1)/\mu_\beta$ and this convergence
  also holds in probability.  Moreover, we have just seen that $\lim_{L\to \infty} \PPbeta(L\in \tau)\to 1/\mu_\beta>0$ so that the latter convergence in probability also holds under $\PPbeta(\cdot\mid L\in \cT)$ and this completes the proof of \eqref{cvhex}.


\medskip

It remains to prove \eqref{cvp}. To begin with, we show that, under $P_{L,\beta}^{c}$
the largest stretch of a given configuration $l\in \Omega_L^c$ is typically not larger than $c\log L$.
This will imply the convergence in probability of $(\sqrt{N_l} \ \tilde{l}_s)_{s\in [0,1]}$ to $0$, which is the convergence of the second coordinate in \eqref{cvp}. 

\bl{lbound} 
For $\beta< \beta_c$, there exists a $c>0$ such that 
\be{lboun}
\lim_{L\to \infty} P_{L,\beta}^{c} \big(\max\{|l_i|, i=1,\dots,N_l\} \geq c \log L\big)=0.
\ee
\el
\begin{proof}
We will prove a slightly stronger property, that is there exists a $c>0$ such that
\be{lboun2}
\lim_{L\to \infty} P_{L,\beta}^{c} \big(\max\{|S_i|, i=1,\dots,\pi_L(l)\} \geq c\log L\big)=0.
\ee
The fact that each stretch of a given configuration $l\in \Omega_L^c$ belongs to one of the $\pi_L(l)$
patterns of $l$ will then be sufficient to obtain \eqref{lboun}. With the help of Lemma \ref{dishexpattern}, we can state that
\begin{align}\label{tdf}
\nonumber P_{L,\beta}^{c}(\max\{|S_i|, i=1,\dots,&\pi_L(l)\} \geq c\log L) \\
\nonumber &=
\PPbeta(\max\{|\sigma_i|, i=1,\dots,\pi_L(\sigma)\} \geq c\log L \mid L\in \cT)\\
 &\leq \tfrac{1}{\PPbeta(L\in \cT)} \PPbeta(\max\{|\sigma_i|, i=1,\dots,L\} \geq c\log L).
\end{align}
By the renewal theorem, we know that  $\lim_{L\to \infty} \PPbeta(L\in\cT)=
1/\mu_\beta>0$. Moreover, since $\hat{f}(\beta)< \tilde{f}(\beta)$, under $\PPbeta$, the IID sequence $(\sigma_i)_{i=1}^\infty$ 
has finite small exponential moments and therefore, we can choose $c>0$ large enough so that
$\lim_{L\to \infty} \PPbeta(\max\{|\sigma_i|, i=1,\dots,L\} \geq c\log L)=0$. Thus, by choosing $c>0$ large enough, the r.h.s. in \eqref{tdf} vanishes as $L\to \infty$ and the proof of Lemma \ref{lbound} is complete.
\end{proof}

At this stage, the proof of \eqref{cvp} will be complete once we  prove that, with $l$ sampled from $P_{L,\beta}^{c}$, we have
\be{convdim}
\sqrt{N_l}\,  \tilde M_l \xrightarrow[L\to \infty]{d}  \sqrt{\frac{v_\beta}{\EEbeta(\nu_1)}} \,  \big( B_s\big)_{s\in [0,1]},
\ee 
with $v_\beta=\EEbeta(y_1^2)$. For simplicity we will rather deal with the process $\widehat M_l= \frac{N_l}{\sqrt{L}} \tilde M_l$ and since, by Theorem \ref{pfa2} (1) we know that $N_l/L$ converges in probability to $\frac{\EEbeta(\nu_1)}{\mu_\beta}$, we can claim that \eqref{cvp} will be proven once we show that, with $l$ sampled from $P_{L,\beta}^{c}$, the process
$\widehat{M}_l$ converges in law  towards $\sqrt{v_\beta/\mu_\beta} \, B$. 

Because of Lemma \ref{lbound}, we do not change the limit of $\widehat M_l$ under $P_{L,\beta}^{c}$ if, 
for $l\in \Omega_L^c$ and $i\in \{1,\dots, \pi_L(l)\}$, we redefine 
$M_{l,i}$ in \eqref{droitmi} as $M_{l,i}=l_1+\dots+l_i$. We will use this later definition until the end of this proof only. 
Then, we let $\bar M_l$ be the cadlag process defined  as
\be{defmbar}
\bar M_{l}(t):=\frac{1}{\sqrt L} \sum_{i=1}^{C_{1,l}(t)} \Ju_i= \frac{l_1+\dots+l_{T_{C_{1,l}(t)}}}{\sqrt L} , \quad t\in [0,1],
\ee
where $C_{1,l}(t)$ simply counts how many patterns have been completed by the trajectory $l$
before its $\lfloor t N_l\rfloor$th horizontal step, i.e., 
\be{defcunlt}
C_{1,l}(t):=\sum_{i=1}^{\pi_L(l)} 1_{ \{\Nu_1+\dots+\Nu_i\leq t N_l\}}.
\ee

In this section, all random processes are viewed as elements of  $D_{[0,1]}$ the set of cadlag processes on $[0,1]$ endowed with the Skorohod topology $\cD$ and we refer to \cite{Bi} for an overview on the subject.  At this stage we note that, for $t\in [0,1]$, we have
\begin{align}\label{diffpro}
|\bar M_{l}(t)-\widehat M_l(t)|&=\tfrac{1}{\sqrt{L}} \bigg| \sum_{i=T_{C_{1,l}(t)}+1}^{\lfloor t N_l\rfloor} l_i \bigg|\leq  \tfrac{1}{\sqrt{L}}  \sum_{i=T_{C_{1,l}(t)}+1}^{T_{C_{1,l}(t)+1}} |l_i \big|\leq \tfrac{1}{\sqrt{L}} \max\{|S_i|, i=1,\dots,\pi_L(l)\},
\end{align}
and therefore, \eqref{lboun2} ensures that $\widehat M_l$ and $\bar M_l$ have the same limit in law under $P_{L,\beta}^{c}$.

Let  $(\sigma_i,\nu_i,y_i)_{i\ge 1}$ be an IID sequence of random
variables under $\PPbeta$ and let $B:=(B_s)_{s\in [0,1]}$ be a standard $1$ dimensional Brownian motion independant from 
$(\sigma_i,\nu_i,y_i)_{i\ge 1}$. We note that, because of Lemma \ref{dishexpattern}, the cadlag process $\bar M_l$ under $P_{L,\beta}^{c}$ has the same law as
the process $\bar W_{L}$ under $\PPbeta(\cdot \mid L\in \cT)$ which is defined by 
\be{defwbar}
\bar W_{L}(t):=\frac{1}{\sqrt L} \sum_{i=1}^{C_{2,L}(t)} y_i , \quad t\in [0,1],
\ee
where $C_{2,L}(t)$ is the counterpart of $C_{1,l}(t)$ in the framework of the associated regenerative process, i.e.,
\be{defcdeuxlt}
C_{2,L}(t):=\sum_{i=1}^{\pi_L(\sigma)} 1_{ \{\nu_1+\dots+\nu_i\leq t \, \cV_L\}},
\ee
where we recall $\pi_L(\sigma)=\max\{i\geq 1\colon\,\sigma_1+\dots+\sigma_i\leq L\}$ and $\cV_L=\nu_1+\dots+\nu_{\pi_L(\sigma)}$.
Thus, the proof of \eqref{cvp} will be complete once we show that, under $\PPbeta(\cdot \mid L\in \cT), L\in \N$
\be{donskk}
\lim_{L\to \infty} \bar W_L=_{\text{Law}} \textstyle \sqrt{\tfrac{v_\beta}{\mu_\beta}} \, B\quad \text{with} \quad v_\beta=\EEbeta(y_1^2).
\ee
\medskip

 The proof of \eqref{donskk} is standard in regenerative process
 theory, see e.g. Section 5.10 of Serfozo \cite{Ser09}. We just need
 to be careful when transporting results from $\PPbeta$ to $\PPbeta(\cdot \mid L\in \cT)$.
\xcom{
In order to prove \eqref{donskk}, we let $W_{L}$ be the cadlag process defined by 
$$W_{L}(t)=\frac{1}{\sqrt L} \sum_{i=1}^{\lfloor t L/\mu_\beta\rfloor} y_i, \quad t\in [0,1].$$
By standard Donsker Theorem we get that
\be{donsk}
\lim_{L\to \infty} W_L=_{\text{Law}} \textstyle \sqrt{\tfrac{v_\beta}{\mu_\beta}} \, B\quad \text{with} \quad v_\beta=\EEbeta(y_1^2).
\ee
The first step consists in showing that the last convergence still holds under 
$\PPbeta(\cdot \mid L\in \cT)$. To that aim we note first that, by Donsker Theorem, the familly $(W_L)_{L\in \N}$ is tight
in $(D_{[0,1]}, \cD)$ under $\PPbeta$. Consequently and since $\PPbeta(L\in \cT)$ has a strictly positive limit,
 $(W_L)_{L\in \N}$ is still tight under $\PPbeta(\cdot \mid L\in \cT)$. It remains to prove the finite dimensional convergence.
To that aim we pick $0<t_1<\dots<t_p<1$ and $A_1,\dots,A_p\in \text{Bor}(\R)$ and set 
 $$H_{\bar t,\bar A}=\{u\in D_{[0,1]}\colon\, u(t_1)\in A_1,\dots, u(t_p)\in A_p\}.$$
 Since $\EEbeta(\sigma_1)=\mu_\beta$ and $t_p<1$, we can state that  $ \lim_{L\to \infty} \PPbeta( R_{t_p,L})=1$ with 
$$R_{t_p,L}= \{\sigma_1+\dots+\sigma_{ \lfloor t_p L/\mu_\beta \rfloor} \leq L-\log L\},$$
and therefore, the finite dimensional convergence will be will be proven once we show that $ \lim_{L\to \infty} G_{1,L}=\PPbeta\big( \sqrt{v_\beta/\mu_\beta} \, B\in H_{\bar t,\bar A}\big)$ with 
 \be{adem}
G_{1,L}= \PPbeta(W_L\in H_{\bar t,\bar A},\, R_{t_p,L} \mid L\in \cT).
 \ee 
Since $(\sigma_i,\nu_i,y_i)_{i\in \N}$ is IID, we can take, in $G_{1,L}$, the conditional expectation with respect to 
$(\sigma_i,\nu_i,y_i)_{i=1}^{\lfloor t_p L/\mu_\beta\rfloor}$ and obtain
\be{fst}
G_{1,L}=\EEbeta\Big[ 1_{\{ W_L\in H_{\bar t,\bar A}\}}\,  1_{\{R_{t_p,L}\}}  \,
\tfrac{ \tilde {\PPbeta}(  L-\sigma_1-\dots-\sigma_{ \lfloor t_p L/\mu_\beta \rfloor}\in \tilde \cT)}{\PPbeta(L\in \cT)}  \Big],
\ee
 where  $\tilde {\PPbeta}$ is the law of $(\tilde \sigma_i)_{i\in \N}$ which is a copy of $(\sigma_i)_{i\in \N}$. By applying again the 
fact that $\PPbeta(L\in \cT)$ has a strictly positive limit, we can conclude that the fraction in the r.h.s. of \eqref{fst}
converges to $1$ uniformly on $R_{t_p,L}$. It remains to recall \eqref{donsk}  and   $ \lim_{L\to \infty} \PPbeta( R_{t_p,L})=1$
to conclude that $ \lim_{L\to \infty} G_{1,L}=\PPbeta\big( \sqrt{v_\beta/\mu_\beta} \, B\in H_{\bar t,\bar A}\big)$, which completes this first step.
\medskip

It remains to link $\bar W_L$ and $W_L$ in such a way that the convergence in law of $W_L$ under $\PPbeta(\cdot \mid L\in \cT)$ yields the same convergence on 
$\bar W_L$ under  $\PPbeta(\cdot \mid L\in \cT)$ . To that aim, we introduce an  auxiliary random process $W_L^*$ defined  as $W_L^*(t):=W_L(x_L(t))$ on $t\in[0,1]$ where  $x_L$ is a random time change  on $[0,1]$, defined as
\begin{equation}
x_L(t)\;=\begin{dcases*}
	C_{2,L}(t) \frac{\mu_\beta}{L} & if $C_{2,L}(t)\,\mu_\beta<L$,\\
  t & otherwise.
  \end{dcases*}
\end{equation}
For $t\in [0,1]$ and $L\in \N$ we have that $|x_L(t)-t|\leq |C_{2,L}(t) \frac{\mu_\beta}{L}-t|$ and it can be proven 
(see \cite{Bi}, Theorem 14.6) that, under $\PPbeta$, the quantity $\sup_{t\in [0,1]} |C_{2,L}(t)\frac{\mu_\beta}{L}-t |$ converges to $0$ in probability as $L\to \infty$ and  consequently also under $\PPbeta(\cdot \mid L\in \cT)$. Thus, the convergence in $(D,\cD)$ of 
$W_L$ and $x_L$ towards $\sqrt{v_\beta/\mu_\beta}\,  B$ and $\text{Id}_{[0,1]}$, repectively, and the fact that $
B$ is $\PPbeta$ almost surely continuous allows us (see \cite{Bi}, Chapter 14), to claim that $W^*_{L}$ also converges to $\sqrt{v_\beta/\mu_\beta}\,  B$ as $L\to \infty$ and under 
$\PPbeta(\cdot \mid L\in \cT)$.

\medskip

Finally, since $\bar W_L(t)=W_L(x_L(t))$ whenever $C_{2,L}(t)\,\mu_\beta <L$, we have clearly that 
\be{diff2proc}
|| \bar W_L- W^*_L||_{\infty,[0,1]}\leq \tfrac{1}{\sqrt{L}} \, \max_{j\in \{\frac{L}{\mu_\beta},\dots, \pi_L(\sigma)\}} \bigg|\sum_{i=L/\mu_\beta}^{j} y_i\bigg| \  1_{\big\{\pi_L(\sigma)\geq \tfrac{L}{\mu_\beta}\big\}}
\ee
and then, we pick $\gep>0$, and we bound from above
\begin{align}\label{bbou}
\PPbeta\bigg( \max \Big\{&\Big|\textstyle \sum_{i=\frac{L}{\mu_\beta}}^{j} y_i\Big|, j\in \big\{\frac{L}{\mu_\beta},\dots, \pi_L(\sigma)\}\Big\} \geq  \gep \sqrt{L},\  \pi_L(\sigma)\geq \tfrac{L}{\mu_\beta}\bigg)\\
\nonumber &\leq  \PPbeta\Big(\pi_L(\sigma)-\tfrac{L}{\mu_\beta}\geq L^{3/4}\Big)+\PPbeta\bigg(\max\Big\{\textstyle\Big|\sum_{j=\frac{L}{\mu_\beta}}^{\frac{L}{\mu_\beta}+j} y_j\Big|,\,  j\leq 
 L^{3/4}\Big\} \geq  \gep \sqrt{L}\bigg).
\end{align}
The first term in the r.h.s. on \eqref{bbou} vanishes as $L\to \infty$ because $(\pi_L(\sigma)-L/\mu_\beta)/\sqrt{L}$
converges in distribution (see \cite{Bi}, Theorem 14.6).
With the Kolmogorov inequality on sums of independent random variables in $L^2$ (see \cite{W}, Chapter 14) we can state that the r.h.s. in \eqref{bbou} is bounded above by  
$$\PPbeta\bigg(\Big|\sum_{j=1}^{L^{3/4}} y_j\Big| \geq  \gep \sqrt{L}\bigg)\leq L^{3/4} \EEbeta(y_1^2)\frac{1}{\gep^2 L}$$
and thus vanishes as $L\to \infty$. By combining \eqref{diff2proc} with the fact that $\lim_{L\to \infty}\PPbeta(L\in \cT)= 1/\mu_\beta>0$  
we can conclude that $ || \bar W_L- W^*_L||_{\infty,[0,1]}$ converges in probability to $0$ under $\PPbeta(\cdot \mid L\in \cT)$ and this completes the proof of \eqref{donskk} and thus of \eqref{cvp}.


%
%
%
%
%
%

\subsection{Proof of Lemma \ref{compfhatftilde}}\label{prl1} 
We recall \eqref{defzcpat} and \eqref{defphialpha} and clearly  $\hat{f}(\beta) = \limsup_{L\to +\infty} \unsur{L}
\ln\hat{Z}^{c}_{L,\beta}$. Moreover, the obvious inequalities
$(e^\beta/2)^{-1} \widetilde{Z}_{L,\beta}\leq \widetilde{Z}^{c}_{L+1,\beta}\leq \widetilde{Z}_{L+1,\beta}$ allows us to state that $\tilde f(\beta)=\limsup _{L\to +\infty} \unsur{L}
\ln \widetilde{Z}^{c}_{L,\beta}$. 

To begin with, we show that $\hat f(\beta)<\tilde f(\beta)$ for $\beta>0$. We recall \eqref{eq:impa} and we shall repeat
the steps taken in \cite{CNGP13} to establish the free energy
equation. First we link $\tilde{f}(\beta)$ to the
convergence radius of the series $\sum_{L} e^{-\alpha L}
\tilde{Z}^{c}_{L,\beta}$ by
$$ \tilde{f}(\beta) = \inf\ens{\alpha : \sum_{L} e^{-\alpha L}\, 
\tilde{Z}^{c}_{L,\beta} < +\infty}$$ and then we write
\begin{align*}
  \sum_{L} e^{-\alpha L}
\tilde{Z}^{c}_{L,\beta} &= \sum_{N\ge 1} (\Gamma_\beta)^N e^{-\alpha
  N} \sum_{L\ge N} e^{-\alpha(L-N)} \probbeta{G_N(V)=L-N, V_N=0} \\
&= \sum_{N\ge 1} (\Gamma_\beta)^N e^{-\alpha
  N} \espbeta{e^{-\alpha G_N(V)} 1_{\{V_N=0\}}}.
\end{align*}
By superadditivity the limit below is well defined
$$ h_\beta(\alpha) := \lim_{N\to \infty} \unsur{N} \ln \espbeta{e^{-\alpha G_N(V)}
  1_{\{V_N=0\}}}$$
and strictly negative on $(0,+\infty)$, and since in the extended
phase $\log \Gamma_\beta >0$, we obtain that $\tilde{f}(\beta)$
is the unique solution on $(0,+\infty)$ of the equation
\be{trf}
h_\beta(\alpha) + \ln \Gamma_\beta -\alpha =0.
\ee
Repeating the same steps with the one pattern partition function, we have
$$\hat{Z}^{c}_{L,\beta} = \sum_{N=1}^L (\Gamma_\beta)^N
  \probbeta{G_N(V)=L-N, T(V)=N}$$
$$ \hat{f}(\beta) = \inf\ens{\alpha : \sum_{L} e^{-\alpha L}
\hat{Z}^{c}_{L,\beta} < +\infty}$$
and
$$ \sum_{L} e^{-\alpha L}
\hat{Z}^{c}_{L,\beta} = \sum_{N\ge 1} (\Gamma_\beta)^N e^{-\alpha
  N} \espbeta{e^{-\alpha G_N(V)} \un{T(V)=N}}$$
By superadditivity we define
$$  \hat{h}_\beta(\alpha) := \lim \unsur{N} \ln \espbeta{e^{-\alpha G_N(V)}
  \un{T(V)=N}}$$
and $\hat{f}^c(\beta)$
is the unique solution on $(0,+\infty)$ of the equation
\be{trf2}
 \hat{h}_\beta(\alpha) + \ln \Gamma_\beta -\alpha =0.
 \ee

Since $\ens{T(V)=N}\subset\ens{V_N=0}$, we have obviously
$\hat{h}_\beta(\alpha) \le h_\beta(\alpha)$. As a  consequence of \eqref{trf} and \eqref{trf2}, the inequality 
$\hat{f}^c(\beta)<\tilde{f}(\beta)$ will be proven once we show that  for
$\alpha>0$, $\hat{h}_\beta(\alpha) < h_\beta(\alpha)$.
To that aim, we pick $\gep>0$, $R\in \N$ and we set $A_{N,\gep,R}=\{V\colon |\{i=1,\dots,N\colon |V_i|\geq R\}|\geq \gep N\}$. Since 
each trajectory $V$ in $A_{N,\gep,R}$ satisfies $G_N(V)\geq \gep R N$ we can state that 
\be{lisuc}
\limsup_{N\to \infty} \unsur{N} \ln \espbeta{e^{-\alpha G_N(V)}
 \ind_{\{A_{N,\gep,R}\}} \ind_{\{T(V)=N\}}}\leq -\alpha \gep R, 
  \ee
and therefore, it suffices to chose $R\geq (-\hat{h}_\beta(\alpha)+1)/\alpha\gep$ to claim that 
\be{lisuc2}
\hat{h}_\beta(\alpha) =\limsup_{N\to \infty} \unsur{N} \ln \espbeta{e^{-\alpha G_N(V)}
 \ind_{\{(A_{N,\gep,R})^c\}} \ind_{\{T(V)=N\}}}. 
  \ee
We pick $\gep=1/8$  and $R\in \N$ that satisfies \eqref{lisuc2}. We define 
$$B_{2N,R}=\{V\colon |\{i=0,\dots,N-1 \colon \, |V_{2i}|, |V_{2i+1}|,|V_{2i+2}|\leq R\}|\geq N/4\}$$
and we note that $(A_{2N,1/8,R})^c \subset B_{2N,R}$. Thus,
\be{lisuc3}
\hat{h}_\beta(\alpha) =\limsup_{N\to \infty} \unsur{2N} \ln K_{\alpha,N,R}.
  \ee
with $K_{\alpha,N,R}:=\bE_\beta\big[e^{-\alpha G_{2N}(V)}
 \ind_{\{B_{2N,R}\}} \ind_{\{T(V)=2N\}}\big]$.   At this stage, we disintegrate the quantity $K_{\alpha,N,R}$ in dependence of the positions of the triples of consecutive integers on 
which $|V|\leq R$ and on the values taken by  $V$ at the two extremities of these triples. To that aim, for $r=1,\dots,N$ we set
$$O_{r,N}=\big\{(i_1<i_2<\dots<i_r)\in \{0,\dots,N-1\}^r\big\},$$
and for $(i)\in O_{r,N}$ we define
$$D_{R,(i)} =\big\{(d_j,f_j)_{j=1}^r\in (\{-R,\dots,R\}\setminus\{0\})^{2 r}\colon   f_j=d_{j+1} \ \  \text{if} \ \ 
i_{j+1}=i_j +1\big\}.$$
For $n\in \N$, we let
$$C_{n}=\{V\colon |\{i=0,\dots,n-1 \colon \, |V_{2i}|, |V_{2i+1}|,|V_{2i+2}|\leq R\}|=0\},$$
and for  $(x,y)\in (\{-R,\dots,R\}\setminus\{0\})^{2}$ we set
$U_{0,x,y}=1$ and $n\in \N$,

$$U_{n,x,y}={\bE}_{\beta,x}\big(e^{-\alpha \, G_{2n}(V)} \ind_{\{C_n\}} \ind_{\{V_{2n}=y\}}\ \ind_{\{V_i\neq 0, \forall i=1,\dots, 2n\}}\big)$$
where $\bP_{\beta,x}$ is the law of $x+V$ where $V$ is a random walk of law $\bP_\beta$ and we set also
$$J_{x,y}=\bE_{\beta,x}\big(e^{-\alpha \, G_{2}(V)}  \ind_{\{V_{2}=y\}}\ \ind_{\{V_1\in \{-R,\dots,R\}\setminus\{0\}\}}\big).$$
For each $r\in \{\frac{N}{4},\dots,N\}$, $(i)\in O_{N,r}$ and $(d,f)\in D_{R,(i)}$ we apply Markov property 
at times $i_1,i_1+2, i_2,i_2+2,\dots, i_r,i_r+2$ to obtain

\begin{align}\label{defK2}
K_{\alpha,N,R}=&\sum_{r=\frac{N}{4}}^{N}\  \sum_{(i)\in O_{r,N}} \ \sum_{ (d,f)\in D_{R,(i)} } 
U_{i_1,0,d_1}\  \big[\prod_{s=1}^{r} J_{d_r,f_r} U_{i_{r+1}-i_r-1,f_r,d_r}\big] \ U_{N-i_r-1,f_r,0}.
\end{align} 
We define $\widetilde J_{x,y}$ as we did for $J_{x,y}$ except that $V_1$ can be equal to $0$, i.e.,
\be{defjt}
\widetilde J_{x,y}=E_x\big(e^{-\alpha G_{2}(V)}  \ind_{\{V_{2}=y\}}\ \ind_{\{V_1\in \{-R,\dots,R\}}\big). 
\ee
We note that 
\be{defs}
\sup_{(x,y)\in  \{-R,\dots,R\}\setminus\{0\}} \frac{J_{x,y}}{\widetilde J_{x,y}}=c_R<1,
\ee
and we let $\widetilde K_{\alpha,N,R}$ be defined as $K_{\alpha,N,R}$ except that
the $J$ terms are replaced by the $\widetilde J$ terms. With the help of  (\ref{defK2}--\ref{defs}), we obtain that $K_{\alpha,N,R}\leq (c_R)^{N/4}\ \widetilde K_{\alpha,N,R},$
and moreover $\widetilde K_{\alpha,N,R}\leq \espbeta{e^{-\alpha G_{2N}(V)}
 \ind_{\{B_{2N,R}\}} \ind_{\{V_{2N}=0\}}}\leq \espbeta{e^{-\alpha G_{2N}(V)}  \ind_{\{V_{2N}=0\}}}\ $ which, by \eqref{lisuc3}, completes  the proof of $\hat{f}(\beta)<\tilde{f}(\beta)$.
\medskip

It remains to show that   $\phi(\tilde{f}(\beta))=1$.
Since we just showed that $\hat{f}(\beta) <
\tilde{f}(\beta)$, we know that $\phi(\tilde{f}(\beta))< +\infty$.

Assume that $\phi(\tilde{f}(\beta))>1$.  Since $\phi$ is finite (and $\cC^\infty$) on
$(\hat{f}(\beta),+\infty)$, we have by dominated convergence
$\lim_{\alpha \to +\infty} \phi(\alpha)=0$ and therefore there
exists $\alpha> \tilde{f}(\beta)$ such that $\phi(\alpha)=1$. Thus, we have
\begin{align}\label{reppartren}
 \tilde{Z}^{c}_{L,\beta} &= e^{\alpha L} \sum_{r=1}^{L/2} \sum_{\substack{t_i\ge 1\\ t_1 +
  \cdots+t_r=L}} \prod_{i=1}^r \hat{Z}^{c}_{t_i,\beta}\, e^{-\alpha t_i}
&=&e^{\alpha L} \sum_{r=1}^{L/2} \sum_{\substack{t_i\ge 2\\ t_1 +
  \cdots+t_r=L}} \prod_{i=1}^r K_\alpha(t_i)\notag\\
&=e^{\alpha L} \sum_{r=1}^{L/2} E_{K_\alpha}\bigg[\sum_{r \ge 1}
  1_{\{\tau_r=L\}}\bigg]
&=& e^{\alpha L} P_{K_\alpha}(L \in \cT),
\end{align}
where  $K_\alpha$ is a probability law on $\N$
defined by $K_\alpha(t)= \hat{Z}^{c}_{t,\beta}e^{-\alpha t}$ and where $(\tau_{i+1}-\tau_i)_{i=1}^{\infty}$
is an IID sequence of random variables of law $K_\alpha$ with  $\tau_0=0$ and $\cT=\{\tau_i,\,  i\in \N_{0}\}$.
Since $\alpha >\hat{f}(\beta)$, we know that under $K_\alpha$ the
variable $\tau_1$ has finite small exponential moments and  therefore
$\mu_\alpha=E_{K_\alpha}(\tau_1)<+\infty$ and by the renewal theorem
$$ \lim_{L\to +\infty} P_{K_\alpha}(L \in \cT) =
\unsur{\mu_\alpha}\,.$$
Consequently,
$ \lim_{L\to +\infty} \unsur{L} \ln \tilde{Z}^{c}_{L,\beta} = \alpha$
and $\alpha=\tilde{f}(\beta)$ which is a contradiction.

\medskip

Assume now that $\phi(\tilde{f}(\beta))<1$. Then, by continuity of
$\phi$, there exists  $\alpha \in (\hat{f}(\beta),
\tilde{f}(\beta))$ such that $\phi(\alpha)<1$. We have, since
$\phi(\alpha)\le 1$,
\begin{align*}
 \tilde{Z}^{c}_{L,\beta} &= e^{\alpha L} \sum_{r=1}^{L/2} \sum_{\substack{t_i\ge 2\\ t_1 +
  \cdots+t_r=L}} \prod_{i=1}^r \hat{Z}^{c}_{t_i,\beta}\, e^{-\alpha t_i}
&=&e^{\alpha L} \sum_{r=1}^{L/2} \sum_{\substack{t_i\ge 2\\ t_1 +
  \cdots+t_r=L}} \phi(\alpha)^r\prod_{i=1}^r K_\alpha(t_i)\\
&=e^{\alpha L} \sum_{r=1}^{L/2} E_{K_\alpha}\Big[\sum_{r \ge 1}  \phi(\alpha)^r
  1_{\{\tau_r=L\}}\Big]
&=& e^{\alpha L} E_{K_\alpha}\big[\phi(\alpha)^{\#\etc{1,L}\cap \cT}\,  1_{\{L \in \cT\}}\big]\le e^{\alpha L}.
\end{align*}
Therefore
$ \tilde{f}(\beta)=\limsup_{L\to +\infty} \unsur{L} \ln
\tilde{Z}^{c}_{L,\beta} \le \alpha$
and it is again a contradiction. This completes the proof of Lemma \ref{prl1}.
}
%

\section{Fluctuation of the convex envelopes around the Wulff shape: proof of Theorem \ref{Oscill}}\label{sec:oscill}

Let us first recall some notations.  For each 
$l \in \cL_{N,L}$ we defined in  \eqref{droitmi} the middle line  $M_l$. 
We also defined in Section \ref{sec:rep}, the $T_N$ transformation that associates with each $l\in \cL_{N,L}$ the path $V_l=(T_N)^{-1}(l)$
such that $V_{l,0}=0$,  $V_{l,i}=(-1)^{i-1} l_i$ for all $i\in\{1,\dots,N\}$ and $V_{l,N+1}=0$.
Finally, note that 
the path $M_l$ can be rewritten with the increments $(U_i)_{i=1}^{N+1}$ of the $V_l$ random walk as
\be{defM}
M_{l,i}=\sum_{j=1}^{i} (-1)^{j+1} \frac{U_j}{2},\quad i\in \{1,\dots,N\}.\\
\ee
In the same spirit, we will need to work with the $V$ random walk sampled from $\bP_\beta$ directly. We associate with each 
$V$ trajectory the process $M$ that is obtained exactly  as $M_l$ is obtained from $V_l$ in \eqref{defM}, i.e.,
\be{defMM}
M_{i}=\sum_{j=1}^{i} (-1)^{j+1} \frac{U_j}{2},\quad i\in \N.
\ee
We recall finally that, for any trajectory $V=(V_i)_{i=0}^\infty\in \Z^\N$ and any $N\in \N$, $A_N(V)$ is the algebraic area  below the $V$-trajectory
up to time $N$ and  $G_N(V)$  is its geometric counterpart, i.e.,  
$A_N(V)=\sum_{i=1}^N V_i$  and $G_N(V)=\sum_{i=1}^N |V_i|$.

\medskip

\subsection{Outline of the proof}
We recall the definition of $\tilde P_{L,\beta}$ in \eqref{defPtil}. As stated in Remark \ref{remalt}, the proof of Theorem \ref{Oscill}
will be complete once we show the convergence in law \eqref{cvpp}.
To that aim we will prove Proposition \ref{llt} and \ref{tensip} below, that are a finite dimensional convergence and a tension argument 
which will be sufficient to prove \eqref{cvpp}

\medskip

Given $\bar{t}=(t_1, \ldots, t_{r_1})$ with $0< t_1 < \cdots < t_{r_1}
<1$, we let $g_{H,\bar{t}}(\bar{x})$, $\bar{x} \in \R^{r_1}$, be the
density of the Gaussian vector $\xi_H(\bar{t})=(\xi_H(t_1), \ldots,
\xi_H(t_{r_1}))$ and let $f_{H,\bar{t}}(z_0,z_1,\bar{x})$ be the
density of the Gaussian vector $(\int_0^1\xi_H(s)\, ds, \xi_H(1),\xi_H(t_1), \ldots,
\xi_H(t_{r_1}))$. Finally let 
$$f^c_{H,\bar{t}}(\bar{y}) = \frac{f_{H,\bar{t}}(0,0,\bar{y})}{\int
  f_{H,\bar{t}}(0,0,\bar{x})\, d\bar{x}}$$ be the density of the law
of $\xi_H(\bar{t})$ conditional on $\int_0^1\xi_H(s)\, ds=0=\xi_H(1)$.

\bp{llt}
For $\beta>\beta_c$ and $(r_1,r_2)\in \N^2$, consider $\bar s=(s_i)_{i=1}^{r_1}$ and $\bar t=(t_i)_{i=1}^{r_2}$  two ordered sequences in $[0,1]$. Set $m_L=a(\beta) \sqrt{L}$. We have that 
\begin{align}\label{llt1}
\lim_{L\to \infty} \sup_{(\bar x, \bar y) \in \Z^{r_1}\times\N^{r_2}} \Big| (m_L)^{\tfrac{r_1+r_2}{2}}  
 \widetilde{P}_{L,\beta}\big[ H_{\bar s,\bar t}(\bar x,\bar
 y)\big]-g_{\beta,\bar s}\Big(\tfrac{\bar
   x}{\sqrt{m_L}}\Big)\, f_{\beta,\bar t}\Big(\tfrac{\bar y}{\sqrt{m_L}},m_L \gamma^*_{\beta}\Big)\Big|=0
\end{align}
with  
\be{defH}
H_{\bar s,\bar t}(\bar x,\bar y)=\big\{  N_l\, \big(2\tilde M_l(\bar s),|\tilde V_l(\bar t)|\big)=(\bar x, \bar y)        \big\}\,,
\ee
\be{defgfbeta}
g_{\beta,\bar s}(\bar x) =
g_{\tilde{H}(q_\beta,0),\bar s}\quad\text{and}\quad f_{\beta,\bar t}(\bar
y,\phi)=\frac12(f^c_{\tilde{H}(q_\beta,0),\bar t}(\bar y -\phi(\bar
t))+ f^c_{\tilde{H}(q_\beta,0),\bar t}(\bar y +\phi(\bar t)))
\ee
\ep

\bp{tensip}
For $\beta>\beta_c$, the sequence of probability laws $(\widehat Q_{L,\beta})_{L\in \N}$
is tight.  
\ep

Let us give here the key idea behind the proofs of Propositions \ref{llt} and \ref{tensip}. We will first prove the counterpart of 
Propositions \ref{llt} and \ref{tensip} with the processes $\widetilde M$ and $\widetilde V$ sampled from 
$\bP_\beta$ (cf. \ref{defMM})   conditional on $V_N=0, A_N(V)= q N^2$  ($q>0$). The reason for these two intermediate  
results is that they can be obtained with the tilting of $\bP_\beta$ exposed in Section \ref{sec:lcltp} and first introduced by Dobrushin and Hryniv in \cite{DH96}. Then, we will translate these two results in terms of the two processes $\widetilde M_l$ and $\widetilde V_l$ (see \eqref{defM}) obtained with  $l$ sampled from $\tilde P_{L,\beta}$. However, this last step is difficult because the conditioning 
that emerges from the $T_N$ transformation (see section  \eqref{sec:rep}) involves the geometric area below the $V_l$ random walk rather than its algebraic counterpart. This is the reason why we state in Section \ref{prle}
a handful of preparatory Lemmas indicating that, when the algebraic area below $V$ is abnormally large ($q N^2$ instead of the typical $N^{3/2}$) then the geometric area below $V$ is not only also abnormally large but is fairly close to the algebraic area. These Lemmas will be proven in Section \ref{proofan}, except for Lemma \ref{ldev} that was already proven in \cite{CNGP13}.

In Section \ref{ad} we state and prove a local limit theorem for any finite dimensional joint distribution of the
middle line $M$ and  the profile $|V|$ with $V$ sampled from $\bP_\beta(\cdot | V_N=0, |A_N(V)|=q N^2)$
as $N\to \infty$. As a by product of this local limit  theorem we will observe that 
asymptotically the rescaled profile $|\tilde V| $ and the rescaled middle line $\tilde M$ decorrelate.
Section \ref{ad} can be seen as the first part of the proof of Proposition \ref{llt}. We will indeed 
prove in Section \ref{pr1} that the latter asymptotic decorelation still holds true 
with $|\tilde V_l |$ and $\tilde M_l$ when $l$ is sampled from $\tilde P_{L,\beta}$ and $\beta>\beta_c$.
This will complete the proof of Proposition \ref{llt}. Similarly, Section \ref{prtensip} can be seen as the first part of the proof of 
Proposition \ref{tensip} since we prove the tightness of $\tilde M$ and $\tilde V$ under $\bP_\beta(\cdot | V_N=0, A_N(V)=q N^2)$
as $N\to \infty$. However, we will not display the part of the proof showing that this tightness is still satisfied
under $\tilde P_{L,\beta}$ since this can be done by mimicking the proof in Section \ref{pr1}.

\subsection{Preparations}\label{prle} 
Lemma \ref{lem:refine} shows that  the probability, under the polymer measure, that the rescaled 
horizontal excursion deviates from $a(\beta)$ by more than a given vanishing quantity decays faster than any given polynomial
provided the vanishing quantity decreases slowly enough.

\bl{lem:refine}
We set $\eta_L=L^{-1/8}$. For $\beta>\beta_c$ and $\alpha>0$ we have 
\be{eq:refine1}
\lim_{L\to \infty} L^{\alpha} P_{L,\beta}\big[ \big( B_{\eta_L, L}\big)^c  \, \big]=0,
\ee
with 
\be{refine2}
B_{\eta,L}=
\Big\{\tfrac{N_l}{\sqrt L} \in a(\beta)+[-\eta,\eta]  \Big\}.
\ee
\el

Lemma \ref{refine3} indicates that, provided we choose a constant $c>0$ large enough, the probability that the algebraic area and the geometric area described by $V_l$ differ from each other by more than $c (\log L)^4$ tends to $0$ faster than any polynomial.

\bl{refine3}
For $\beta>\beta_c$ and $\alpha>0$ there exists a $c>0$ such that 
\be{refine0}
\lim_{L\to \infty} L^{\alpha} \, P_{L,\beta}\big( | A_{N_l} (V_l) | \notin [L-N_l-c(\log L)^4,L-N_l] \big)=0.
\ee
\el

Lemma \ref{ldev} was proven in \cite[proposition 2.4]{CNGP13}. It 
identifies the sub-exponential decay rate of the event $\{V_N=0, A_N(V)=q N^2\}$ when $V$ is sampled from $\bP_\beta$. 

\begin{lemma}\label{ldev} {\rm [Proposition 2.4 in \cite{CNGP13}]}
For  $\etc{q_1,q_2} \subset (0,\infty)$, there exists $C_1>C_2>0$ and $n_0\in\N$ such that for all 
$q\in [q_1,q_2]\cap \frac{\N}{N^2}$ we have
\be{ldevi}
\frac{C_2}{N^2} e^{-\rho_\beta(q) N}\leq \bP_\beta (A_N(V)=q N^2, V_N=0)\leq \frac{C_1}{N^2} e^{-\rho_\beta(q) N}\,,
\ee
where $\rho_\beta(q):= L_\Lambda(\tilde{H}(q,0)) -q \tilde{h}_0(q,0)$,
so that $\rho_\beta$ is $\cC^\infty$ and we have $\tilde{G}(a)= a (\log\Gamma(\beta) - \rho_\beta(\unsur{a^2}))$ 
(recall \eqref{defg}).
\end{lemma}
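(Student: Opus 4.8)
The proof rests on the Cram\'er tilt prepared in Section~\ref{sec:lcltp}. Fix a compact $[q_1,q_2]\subset(0,\infty)$ and, for $N\in\N$ and $q\in[q_1,q_2]\cap\frac{\N}{N^2}$, let $H_N^q=(h_{N,0}^q,h_{N,1}^q)$ be the exponential tilt solving \eqref{eq:tildeEC}, so that under $\bP_{N,H_N^q}$ (recall \eqref{eq:defH}) the increments $U_1,\dots,U_N$ are independent with $U_j\sim\nu_{h_j^N}$, $h_j^N=(1-j/N)h_{N,0}^q+h_{N,1}^q$ (recall \eqref{defnu}), while $\bE_{N,H_N^q}(A_N)=qN^2$ and $\bE_{N,H_N^q}(V_N)=0$. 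Reading the change of variables \eqref{eq:defH} backwards and observing that $h_{N,0}^q\frac{A_N}{N}+h_{N,1}^q V_N=qN\,h_{N,0}^q$ on the event of interest gives
\[
\bP_\beta\bigl(A_N(V)=qN^2,\,V_N=0\bigr)=e^{\,\Ll_{\Lambda_N}(H_N^q)-qN\,h_{N,0}^q}\;\bP_{N,H_N^q}\bigl(A_N=qN^2,\,V_N=0\bigr).
\]
It thus suffices to show that (i) the exponential prefactor equals $e^{-\rho_\beta(q)N+O(1)}$ uniformly in $q\in[q_1,q_2]$, and (ii) a two-dimensional local limit theorem holds for the lattice pair $(A_N,V_N)$ under the tilted law, uniformly in $q$.

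For (i) I would first record the Euler--Maclaurin estimates $\sup_{H\in K}|\tfrac1N\Ll_{\Lambda_N}(H)-\Llam(H)|=O(1/N)$ and $\sup_{H\in K}|\nabla(\tfrac1N\Ll_{\Lambda_N})(H)-\nabla\Llam(H)|=O(1/N)$ on any compact $K\subset\mathcal D$, valid because $\tfrac1N\Ll_{\Lambda_N}(H)$ is a Riemann sum for $\Llam(H)=\int_0^1\Ll(xh_0+h_1)\,dx$ and $\Ll$ is smooth on $(-\beta/2,\beta/2)$. Since $\Hess\Llam$ is uniformly positive definite on $K$ (because $\Ll''>0$ and $\int_0^1(ax+b)^2dx\ge c(a^2+b^2)$ for some $c>0$), the implicit function theorem applied to \eqref{eq:tildeEC} and \eqref{eq:deftil} yields $\sup_{q\in[q_1,q_2]}|H_N^q-\tilde H(q,0)|=O(1/N)$. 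As $\tilde H(q,0)$ is by convex duality the critical point of $H\mapsto qh_0-\Llam(H)$, the first-order Taylor term there vanishes, whence, uniformly in $q$,
\[
\tfrac1N\Ll_{\Lambda_N}(H_N^q)-q\,h_{N,0}^q=\Llam(\tilde H(q,0))-q\,\tilde h_0(q,0)+O(1/N)=-\rho_\beta(q)+O(1/N),
\]
with $\rho_\beta(q)=q\,\tilde h_0(q,0)-\Llam(\tilde H(q,0))\ge0$ the Legendre transform of $\Llam$ at $(q,0)$; multiplying by $N$ gives the claimed $e^{-\rho_\beta(q)N+O(1)}$. Its $\cC^\infty$ regularity, and the identity $\tilde G(a)=a(\log\Gamma(\beta)-\rho_\beta(1/a^2))$ which is just \eqref{defg} rewritten, follow from the same implicit function theorem argument together with the smoothness of $\Llam$.

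For (ii), write $A_N=\sum_{j=1}^N c^{(N)}_j U_j$ with coefficients $c^{(N)}_j$ ranging linearly between $1$ and $N$, and $V_N=\sum_{j=1}^N U_j$. With $\sigma_j^2:=\var_{\nu_{h_j^N}}(U_1)$ bounded away from $0$ and $\infty$ uniformly in $q$ (as $h_j^N$ ranges over a compact subset of $(-\beta/2,\beta/2)$), the covariance matrix $\Sigma_N$ of $(A_N,V_N)$ has entries of orders $N^3,N^2,N$ and $\det\Sigma_N\asymp N^4$; moreover the lattice generated by the vectors $(c^{(N)}_j,1)$, together with the aperiodicity of the laws $\nu_h$, is all of $\Z^2$. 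A local central limit theorem for this triangular array — obtained by Fourier inversion, rescaling the dual variables of $A_N$ and $V_N$ by $N^{-3/2}$ and $N^{-1/2}$ respectively, Taylor-expanding each characteristic function $\bE_{\nu_h}[e^{iuU_1}]$ near the origin, and discarding the rest of the integration domain via the uniform bounds $|\bE_{\nu_h}[e^{iuU_1}]|\le e^{-cu^2}$ for small $|u|$ and $|\bE_{\nu_h}[e^{iuU_1}]|\le1-c$ for $u$ at distance $\ge\delta$ from $2\pi\Z$ — then gives, uniformly in $q$,
\[
\bP_{N,H_N^q}\bigl(A_N=qN^2,\,V_N=0\bigr)=\frac{1+o(1)}{2\pi\sqrt{\det\Sigma_N}},
\]
the Gaussian density being evaluated at its mean $(qN^2,0)$. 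Combined with (i), this yields $C_2N^{-2}e^{-\rho_\beta(q)N}\le\bP_\beta(A_N=qN^2,V_N=0)\le C_1N^{-2}e^{-\rho_\beta(q)N}$ for all $N\ge n_0$.

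The main obstacle is step (ii): one must render the two-dimensional local limit theorem fully quantitative and uniform over the tilting parameters $h_j^N$, for a triangular array of non-identically distributed increments whose area component lives on the anomalous scale $N^{3/2}$ instead of $N^{1/2}$. This forces a careful split of the Fourier integral into a Gaussian core and a remainder, with every characteristic-function bound tracked uniformly over the compactly varying tilts; step (i), by contrast, is soft, amounting to Riemann-sum control plus convex duality.
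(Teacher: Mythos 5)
Your reconstruction is correct and is, in essence, the argument of the cited source \cite[Proposition~2.4]{CNGP13}, for which the paper recalls the Dobrushin--Hryniv tilting apparatus in Section~\ref{sec:lcltp}: tilt to $\mathbf{P}_{N,H^q_N}$, obtain the exponential rate $-\rho_\beta(q)N+O(1)$ from Legendre duality at $\tilde{H}(q,0)$ combined with the $O(1/N)$ uniform convergence of $H^q_N$ to $\tilde{H}(q,0)$, and read off the $N^{-2}$ prefactor from a two-dimensional local limit theorem for the triangular array $(A_N,V_N)$ under the tilted law (essentially \cite[Proposition~6.1]{CNGP13}). One remark: your formula $\rho_\beta(q)=q\,\tilde{h}_0(q,0)-L_\Lambda(\tilde{H}(q,0))$, which is the Legendre transform of $L_\Lambda$ at $(q,0)$ and hence nonnegative, is the negative of the definition printed in the lemma statement, which carries a sign typo; yours is the version consistent both with the decaying bound $e^{-\rho_\beta(q)N}$ and with the identity $\tilde{G}(a)=a(\log\Gamma(\beta)-\rho_\beta(1/a^2))$ obtained by substituting $q=1/a^2$ into~\eqref{defg}.
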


Lemma \ref{refine4} insures us that, when $V$ is sampled from $\bP_\beta$ conditional on $ V_N=0, \valabs{A_N(V)}=q N^2$,
the probability that the geometric area described by $V$ differs from the algebraic area by more than $(\log N)^4$ tends to $0$ faster than any polynomial.
\bl{refine4}
For any $[q_1,q_2]\subset (0,\infty)$ and $\alpha,c>0$ we have 
\be{unip}
\lim_{N\to \infty} \sup_{q\in [q_1,q_2]} N^{\alpha}\,  \bP_\beta\big(G_N(V)\geq q N^2+c (\log N)^4\, |\, V_N=0, \valabs{A_N(V)}=q N^2\big) =0.
\ee
\el

We recall that all Lemmas of this Section are proven in Section \ref{proofan}. 

\subsection{Asymptotic decorrelation of the middle line and of the
  profile}\label{ad}
  
  In this section, we prove the following Lemma
that gives us a local limit theorem 
for the paths $V=(V_i)_{i=0}^N$ and $M=(M_i)_{i=0}^N$ simultaneously when $V$ is sampled from $\bP_\beta(\cdot\, |\, V_N=0, A_N(V)=qN^2)$. This local limit theorem is reinforced by the fact that it is uniform in $q$ belonging to any compact set of $]0,\infty[$.

\begin{lemma}\label{refine5}
For  $\etc{q_1,q_2} \subset (0,\infty)$ and  $(r_1,r_2) \in \N^{r_1+r_2}$,  we have
\begin{multline}
  \label{equaconv}
  \lim_{N\to +\infty} \sup_{q\in\etc{q_1,q_2}}
  \sup_{(\bar{x},\bar{y})\in\Z^{r_1+r_2}} \left|N^{
      \frac{r_1+r_2}{2}}\,  \bP_\beta(H_{\bar s,\bar t}(\bar x,\bar
    y)\, | \, W_{N,L,q N^2}) - \right.\\
\left. \hat{f}_{\tilde{H}(q,0),\bar{t}}\etp{\frac{\bar{y}}{N^{1/2}},N^{1/2}\gamma^*_q} g_{\tilde{H}(q,0),\bar{s}}\etp{\frac{\bar{x}}{N^{1/2}}}
 \right|= 0\,
\end{multline}
with 
$$W_{N,L,b}=\big\{\valabs{A_N(V)}=b,\, V_{N+1}=0,\, G_N(V)\in \{b,\dots, b+c (\log L)^4 \}\big\},$$
and $$ \hat{f}_{H,\bar{t}}(\bar{y},\phi) =
\frac12\etp{f^c_{H,\bar{t}}(\bar{y}-\phi(\bar{t})) + f^c_{H,\bar{t}}(\bar{y}+\phi(\bar{t}))}.$$
\end{lemma}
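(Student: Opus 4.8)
The plan is to prove~\eqref{equaconv} in three stages: reduce the conditioning event $W_{N,L,qN^2}$ to a pure algebraic--area event, transfer the problem to the Dobrushin--Hryniv tilted measure of Section~\ref{sec:lcltp}, and there prove a multidimensional local central limit theorem for the joint law of the middle line, the profile, the algebraic area and the endpoint. The genuinely new point compared with~\cite{DH96} and~\cite{CNGP13} is that the middle line $M$ of~\eqref{defMM} must be tracked \emph{jointly} with $V$; this is feasible because the alternating signs in~\eqref{defMM} force $M$ to decorrelate asymptotically from $V$ and from the conditioning, which is exactly what makes the limit in~\eqref{equaconv} factorize into a profile density times a middle--line density.

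First I would rewrite $\bP_\beta(H_{\bar s,\bar t}(\bar x,\bar y)\mid W_{N,L,qN^2})$ (recall~\eqref{defH}) as a ratio and dispose of the geometric--area constraint in $W_{N,L,qN^2}$: since $G_N(V)\ge|A_N(V)|$ always, and since by Lemma~\ref{refine4} the event $\{G_N(V)\ge qN^2+c(\log N)^4\}$ has conditional probability $o(N^{-\alpha})$ for every $\alpha$, uniformly in $q\in[q_1,q_2]$, under $\bP_\beta(\cdot\mid V_{N+1}=0,|A_N(V)|=qN^2)$, replacing $W_{N,L,qN^2}$ by $\{|A_N(V)|=qN^2,\,V_{N+1}=0\}$ changes both sides of~\eqref{equaconv} by a quantity that vanishes even after multiplication by $N^{(r_1+r_2)/2}$. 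Next I would split $\{|A_N(V)|=qN^2\}$ into $\{A_N(V)=\pm qN^2\}$: the reflection $(U_i)_i\mapsto(-U_i)_i$ preserves $\bP_\beta$, sends $(A_N,V,M)$ to $(-A_N,-V,-M)$, hence fixes $|V|$, flips $2M$, and exchanges the two pieces, which therefore carry equal mass; on each of them (once the geometric constraint is reinstated) the walk is essentially a single--sign excursion, so $|\tilde V(\bar t)|$ equals $\pm\tilde V(\bar t)$ with the matching sign and the profile fluctuates around $\pm N\gamma^*_q$. Recombining the two contributions and using that $g_{\tilde H(q,0),\bar s}$ is even produces the half--sum $\hat f_{\tilde H(q,0),\bar t}(\,\cdot\,,N^{1/2}\gamma^*_q)$ times $g_{\tilde H(q,0),\bar s}$, so it suffices to prove the analogue of~\eqref{equaconv} under $\bP_\beta(\cdot\mid A_N(V)=qN^2,V_{N+1}=0)$ with $|\tilde V(\bar t)|$ replaced by $\tilde V(\bar t)$. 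On this event the Dobrushin--Hryniv tilt applies: with $H_N^q=(h^q_{N,0},h^q_{N,1})$ solving~\eqref{eq:tildeEC}, the law $\bP_{N,H_N^q}$ of~\eqref{eq:defH} makes $U_1,\dots,U_N$ independent with $U_j\sim\nu_{h^N_j}$, $h^N_j=(1-\tfrac jN)h^q_{N,0}+h^q_{N,1}$ (recall~\eqref{defnu}), and $\bE_{N,H_N^q}(A_N/N,V_N)=(qN,0)$; since $\tfrac{d\bP_\beta}{d\bP_{N,H_N^q}}$ is constant on $\{A_N=qN^2,V_N=0\}$, the conditional law under $\bP_\beta$ equals the one under $\bP_{N,H_N^q}$, and the tilts stay in a compact subset of $\cD$ with $H_N^q\to\tilde H(q,0)$ (cf.\ Section~\ref{sec:lcltp} and~\cite{CNGP13}), so the $\nu_{h^N_j}$ have uniformly bounded moments and variances bounded away from $0$.

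The core is then a local CLT, uniform in $q\in[q_1,q_2]$ and in the evaluation point, for the vector $(2M_{\lfloor\bar sN\rfloor},V_{\lfloor\bar tN\rfloor},A_N,V_N)$ under $\bP_{N,H_N^q}$ (with $\bar s,\bar t$ denoting the obvious sub--vectors), followed by conditioning on the last two coordinates being $(qN^2,0)$. Using $\mathrm{Var}_{\nu_h}(U)=L''(h)$, $\bE_{\nu_h}(U)=L'(h)$ and $|h^N_{j+1}-h^N_j|=O(1/N)$, one computes that $N^{-1}\mathrm{Var}(V_{\lfloor tN\rfloor})$ and $N^{-1}\mathrm{Var}(2M_{\lfloor tN\rfloor})$ both tend to $\int_0^t L''((\tfrac12-x)\tilde h_0(q,0))\,dx$, the covariance kernel of $\xi_{\tilde H(q,0)}$ (here one uses $\tilde h_1(q,0)=-\tfrac12\tilde h_0(q,0)$, itself a consequence of $\partial_{h_1}\Llam(\tilde H(q,0))=\int_0^1 L'(xh_0+h_1)\,dx=0$ and the evenness of $L$), whereas $\mathrm{Cov}(2M_{\lfloor sN\rfloor},V_{\lfloor tN\rfloor})=\sum_{j\le(s\wedge t)N}(-1)^{j+1}L''(h^N_j)$ telescopes to $O(1)$, and similarly $\mathrm{Cov}(2M_{\lfloor tN\rfloor},A_N)=o(N^{3/2})$ and $\mathrm{Cov}(2M_{\lfloor tN\rfloor},V_N)=O(1)$. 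Hence $2M$ asymptotically decouples from $(V(\bar t),A_N,V_N)$, the limiting $(r_1+r_2+2)$--dimensional Gaussian is block--diagonal and, by the Dobrushin--Hryniv non--degeneracy of the $(V(\bar t),A_N,V_N)$--block~\cite{DH96}, non--degenerate. Conditioning on $(A_N,V_N)=(qN^2,0)$ --- the discrete counterpart of $\int_0^1\xi=0=\xi(1)$ --- leaves the $2M$--coordinates with their unconditioned density $g_{\tilde H(q,0),\bar s}$, while $V(\bar t)$, recentered at its mean $\bE_{N,H_N^q}V_{\lfloor tN\rfloor}=N\gamma^*_q(t)+o(N^{1/2})$ (recall~\eqref{defwulf}), picks up the conditioned density $f^c_{\tilde H(q,0),\bar t}$; dividing by $\bP_{N,H_N^q}(A_N=qN^2,V_N=0)$, which untilts to match Lemma~\ref{ldev}, turns the Gaussian density into $f^c$ and fixes the normalization, the lattice factor in it reflecting that $2M_i-V_i\in2\Z$ and being the source of the $\tfrac12$ in $\hat f$. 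The local CLT itself I would obtain by the characteristic--function method of~\cite{DH96}: a Gaussian approximation of $\bE_{N,H_N^q}[\exp(i\,\theta\cdot(2M(\bar s),V(\bar t),A_N/N^{3/2},V_N/N^{1/2}))]$ near $\theta=0$ from the covariances above, together with control of the remaining Fourier integral away from $0$ via the lattice structure and the uniform non--degeneracy of the $\nu_{h^N_j}$.

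Assembling these pieces yields~\eqref{equaconv}. The hard part is the last step: upgrading the Dobrushin--Hryniv local CLT to the enlarged vector $(M,V,A_N,V_N)$, namely identifying the exact lattice it generates --- hence the normalizing constant carrying the factor $\tfrac12$ --- verifying non--degeneracy of the $(r_1+r_2+2)$--dimensional covariance, and maintaining uniformity over $q\in[q_1,q_2]$ and over the evaluation point. By contrast, the asymptotic decorrelation of the middle line, although it is the structural reason the limit factorizes, is quantitatively inexpensive once one notices the telescoping of the alternating covariance sums, and the reductions of the conditioning are routine given Lemmas~\ref{refine4} and~\ref{ldev}.
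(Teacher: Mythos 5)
Your proposal follows essentially the same route as the paper: reduce to the algebraic-area conditioning via Lemma~\ref{refine4}, split $\{|A_N|=qN^2\}$ into $\{A_N=\pm qN^2\}$ by the reflection symmetry of $\bP_\beta$ to produce the half-sum $\hat f$, transfer to the Dobrushin--Hryniv tilt $\bP_{N,H^q_N}$, and prove the joint local CLT for $(2M(\bar s),V(\bar t),A_N,V_N)$ by Fourier inversion, with the telescoping cancellation of the alternating-sign cross-covariances delivering the asymptotic decoupling of $2M$ from the rest --- this is exactly the content of Theorem~\ref{lltilted} and its proof. One minor slip: the $\tfrac12$ in $\hat f$ comes entirely from averaging the two equally weighted pieces $A_N=\pm qN^2$, as you yourself correctly state earlier, not from a lattice factor tied to $2M_i-V_i\in2\Z$.
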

\medskip

\begin{remark}
For a given $\bar s$ and $\bar t$, and $[q_1,q_2]\in (0,\infty)$, there exists a $M>0$ such that 
\be{bound}
\sup_{q\in [q_1,q_2]} \sup_{(\bar x, \bar y)\in \R^{r_1+r_2}}f^c_{\tilde{H}(q,0),\bar{t}}(\bar{y})\,  g_{\tilde{H}(q,0),\bar{s}}(\bar{x})\leq M
\ee

\end{remark}

\subsubsection{Proof of Lemma \ref{refine5}}

First of all we note that, thanks to Lemma \ref{refine4}, it is sufficient to prove Lemma \ref{refine5} 
with the conditioning  $\{V_N=0, |A_N(V)|=q N^2\}$ instead of $W_{N,L,q N^2}$.  We will first prove Lemma 
\ref{refine5} subject to Theorem \ref{lltilted} that is stated below and then, we will  prove Theorem \ref{lltilted}.

\bt{lltilted}
For any $\etc{q_1,q_2} \subset\mathbb{R}$ we have
\begin{align*}
  \lim_{N\to +\infty} &\sup_{q\in\etc{q_1,q_2}}
  \sup_{(z_0,z_1,\bar{x},\bar{y})\in\R^{2+r_1+r_2}}\\
 &  \left|N^{2 +
      \frac{r_1+r_2}{2}} \probnhn{A_N=N^2q + z_0,V_N=z_1,V_{\fnt}=\bar{y},2\tilde{M}_{\fns}=\bar{x}} -\right.
    \\
&\hspace{4cm} \left. f_{\tilde{H}(q,0),\bar{t}}\etp{\frac{z_0}{N^{3/2}},\frac{z_1}{N^{1/2}},\frac{\bar{y}-N\gamma^*_q(\bar{t})}{N^{1/2}}} g_{\tilde{H}(q,0),\bar{t}}\etp{\frac{\bar{x}}{N^{1/2}}}\right|= 0\,.
\end{align*}

\et

Let $k_H(z_0,z_1)$ be the density of the law of $(\int_0^1\xi_H(s)\,
ds, \xi_H(1))$. Then we have $k_H(z_0,z_1) = \int
f_{H,\bar{t}}(z_0,z_1,\bar{y})\, d\bar{y}$ and we have the  limit
theorem (Proposition 6.1 of \cite{CNGP13})
$$ \lim_{N\to \infty} N^2 P_{N,h^q_N}\etp{V_N=0,A_N = q N^2} =
k_{\tilde{H}(q,0)}(0,0)\,.$$

Therefore, since we can write
\begin{multline*}
  N^{\frac{r_1+r_2}{2}}  \bP_\beta(H_{\bar s,\bar y}(\bar x,\bar y)\,
| \,V_N=0, \valabs{A_N(V)}=q N^2) = \\
\frac{N^{2+\frac{r_1+r_2}{2}}\probnhn{A_N=N^2q ,V_N=0,V_{\fnt}=\bar{y},2\tilde{M}_{\fns}=\bar{x}}}
{N^2 \etp{P_{N,H^q_N}\etp{V_N=0,A_N = q N^2}+P_{N,H^q_N}\etp{V_N=0,A_N
      = -q N^2}}}\\
 + \frac{N^{2+\frac{r_1+r_2}{2}}\probnhn{A_N=-N^2q +
      ,V_N=0,V_{\fnt}=\bar{y},2\tilde{M}_{\fns}=\bar{x}}}
{N^2 \etp{P_{N,H^q_N}\etp{V_N=0,A_N = q N^2}+P_{N,H^q_N}\etp{V_N=0,A_N
      = -q N^2}}}
\end{multline*}
We now can use some symmetry argument (the symmetry of the distribution
of the increments of the geometric random walk) to say that
\begin{gather*}
\probnhn{V_N=0,A_N = q N^2} = \probnhn{V_N=0,A_N = -qN^2}\\
\gamma^*_{-q}(t) = -\gamma^*_q(t)\,, \quad
\tilde{H}(-q,0)=-\tilde{H}(q,0), \quad f^c_{H,\bar{t}}(\bar{y})=f^c_{-H,\bar{t}}(\bar{y})=f^c_{H,\bar{t}}(-\bar{y})\,.
\end{gather*}
Therefore we obtain the desired result by applying Theorem \ref{lltilted} and
combining it with the definition of $f^c_{H,t}$.

\subsubsection{Proof of theorem \ref{lltilted}}


Before starting the proof of Theorem \ref{lltilted},  we shall give a
flavour of its nature by looking at a toy model.

Let us consider a random walk $S_0=0, S_n = X_1 + \cdots + X_n$ with
IID increment and let us build an alternating sign random walk
$\bar{S}_n := \sum_{i=1}^n (-1)^{i+1} X_i$.  If $X_1$ is square
integrable and, say,  $\esp{X_0}=0$, $\esp{X_0^2}=1$, then by the
Central Limit Theorem, $\frac{S_n}{\sqrt{n}} \cvloi Z \sim
\cN(0,1)$. Furthermore, we have asymptotic decorrelation
$$\unsur{\sqrt{n}}(S_n, \bar{S_n}) \cvloi (Z,\bar{Z})$$ a pair of
independent $\cN(0,1)$ distributed random variables, and this
convergence can be lifted to the level of processes : 
$$ \unsur{\sqrt{n}}(S_{\floor{nt}},\bar{S}_{\floor{nt}},; t\ge 1) \cvloi
 (B_t,\bar{B}_t, t\ge 1)$$
a pair of independent standard Brownian motions. Theorem \ref{lltilted} shows
that we can extend this decorrelation result to properly conditioned
processes, in the sense of finite distributions.
Recall that $V_{\floor{N\bar{t}}} =
(V_{\floor{Nt_1}}, \ldots, V_{\floor{Nt_{r_1}}})$ and for
$\bar{s}\in(0,1)^{r_2}$, $\tilde{M}_\fns = (\tilde{M}_{\floor{N s_1}},
\ldots, \tilde{M}_{\floor{N s_{r_2}}})$.
%

Let us now start the proof of Theorem \ref{lltilted}.
  The proof is a copy of the classic proof of the local central limit
  theorem, and very similar to the proof given in \cite{DH96}. We set
  $$X_N
  :=\etp{\frac{z_0}{N^{3/2}},\frac{z_1}{N^{1/2}},\frac{\bar{y}-\espnhn{V_{\fnt}}}{N^{1/2}},\frac{\bar{x}-2\espnhn{\tilde{M}_{\fns}}}{N^{1/2}}}\,.$$
 Recall that by construction of $H^q_N$, we have $\espnhn{A_N}=N^2q$. Hence,
  by Fourier inversion formula
\begin{multline*}
 N^{2 +
      \frac{r_1+r_2}{2}}\probnhn{A_N=N^2q +
  z_0, V_N=z_1, V_{\fnt}=\bar{y},\tilde{M}_{\fns}=\bar{x}}
= \\(2\pi)^{-(2+r_1+r_2)} \int_{\cA_N} e^{-i <T,X_N>}
\hat{\phi}_{N,H^q_N}(T)\, dT
\end{multline*}
with $T=(\tau_0,\tau_1,\bar{\kappa},\bar{\eta})$, $\cA_N$ the domain
$$ \cA_N :=\ens{T : \valabs{\tau_0} \le N^{3/2}\pi,\valabs{\tau_1}\le
  \sqrt{N} \pi, \valabs{\kappa_i}\le \sqrt{N} \pi,\valabs{\eta_j}\le
  \sqrt{N} \pi}$$
and $\hat{\phi}_{N,H^q_N}(T)$ the characteristic function 
$$ \hat{\phi}_{N,H^q_N}(T):=E_{N, H_N^q}\bigg[ e^{i\etp{\frac{\tau_0}{N^{3/2}}(A_N
    -N^2 q) + \unsur{N^{1/2}}(\tau_1 V_N +<\bar{\kappa},V_{\fnt}-\espnhn{V_{\fnt}}> +
    <\bar{\eta}, 2\tilde{M}_{\fns}-\espnhn{2\tilde{M}_{\fns}}>}}\bigg] $$

Therefore 
\begin{multline*}
  R_N := N^{2 +
      \frac{r_1+r_2}{2}} \probnhn{A_N=N^2q +
      z_0,V_N=z_1,V_{\fnt}=\bar{y},\tilde{M}_{\fns}=\bar{x}} \\
-
f_{\tilde{H}(q,0),\bar{t}}\etp{\frac{z_0}{N^{3/2}},\frac{z_1}{N^{1/2}},\frac{\bar{y}-\espnhn{V_{\fnt}}}{N^{1/2}}}
g_{\tilde{H}(q,0),\bar{s}}\etp{\frac{\bar{x}-\espnhn{2\tilde{M}_{\fns}}}{N^{1/2}}} =\\
C \int_{\cA_N} e^{-i <T,X_N>}
\etp{\hat{\phi}_{N,H^q_N}(T)-\bar{\phi}_{\tilde{H}(q,0)}(T)}\, dT
\end{multline*}
with $\bar{\phi}_{\tilde{H}(q,0)}(T)$ the characteristic function of
the Gaussian vector with density
$f_{H,\bar{t}}(z_0,z_1,\bar{y})g_{H,\bar{s}}(\bar{x})$ that is the
Gaussian vector $(\int_0^1\xi_H(s)\, ds, \xi_H(1),\xi_H(\bar{t}),
\bar{\xi}_H(\bar{s}))$ where $\bar{\xi}_H$ is an independent copy of $\xi_H$.

Hence,
$$ \valabs{R_N} \le C \int_{\cA_N}\valabs{
\hat{\phi}_{N,H^q_N}(T)-\bar{\phi}_{\tilde{H}(q,0)}(T)}\, dT =
\sum_{i=1}^4 J^{(q)}_i$$
 with
 \begin{align*}
   J^{(q)}_1 &= \int_{\Gamma_1} \valabs{
\hat{\phi}_{N,H^q_N}(T)-\bar{\phi}_{\tilde{H}(q,0)}(T)}\, dT,
&\Gamma_1=\etc{-A,A}^{2+r_1+r_2}\\
J^{(q)}_2 &= \valabs{\int_{\Gamma_2} \bar{\phi}_{\tilde{H}(q,0)}(T)\,
  dT}\,, & \Gamma_2 = \R^{2+r_1+r_2} \backslash \Gamma_1\,,\\
J^{(q)}_3 &= \int_{\Gamma_3} \valabs{ \hat{\phi}_{N,H^q_N}(T)}\, dT
&\Gamma_3 = \ens{T : \valabs{t_i} \le \Delta \sqrt{N}} \backslash
\Gamma_1 \\
J^{(q)}_4 &= \int_{\Gamma_4} \valabs{ \hat{\phi}_{N,H^q_N}(T)}\, dT
&\Gamma_4 = \cA_N \backslash (\Gamma_1 \cup \Gamma_3), 
 \end{align*}
where $A,\Delta>0$ are positive constants. We can bound $J^{(q)}_i$ for
$i=2,3,4$ exactly with the same procedure used in \cite{CNGP13},
Proposition 6.1. We shall focus on proving that 
$$ \forall A>0\,,\quad \lim_{N\to +\infty} \sup_{q \in\etc{q_1,q_2}}
J^{(q)}_1 = 0\,.$$
It is enough to prove that 
$$ \lim_{N\to +\infty} \sup_{q \in\etc{q_1,q_2}}  \sup_{T\in\Gamma_1}\valabs{
\hat{\phi}_{N,H^q_N}(T)-\bar{\phi}_{\tilde{H}(q,0)}(T)}=0\,.$$

To this end we shall note $\bar{\Lambda}_N = ( \frac{A_N}{N}, V_N,
V_\fnt, 2 M_{\fns})$ and consider the moment generating function
$$ \blbln(H,\bar{\kappa},\bar{\eta}) := \log \espbeta{\exp\etp{h_0 \frac{A_N}{
N} + h_1 V_N + <\bar{\kappa}, V_\fnt> + < \bar{\eta},M_{\fns}>}}$$
Observe that
$$ \log \hat{\phi}_{N,H^q_N}(T) = \blbln(H^q_N + \frac{i}{N^{1/2}}
(\tau_0,\tau_1), \frac{i}{N^{1/2}} \bar{\kappa},
\frac{i}{N^{1/2}}\bar{\eta}) - \blbln(H^q_N,0,0) - \frac{i}{N^{1/2}} < T,
\espnhn{\bar{\Lambda}_N}>$$
Therefore, an order 2 Taylor expansion gives
$$ \log \hat{\phi}_{N,H^q_N}(T) = -\undemi < \unsur{N} \Hess
\blbln(H^q_N,0,0) T,T> + \alpha_N$$
 with $\sup_N \sup_{q \in\etc{q_1,q_2}} N^{1/2} \alpha_N < +\infty$.

We can write the moment generating function explicitly as 
$$ \blbln(H,\bar{\kappa},\bar{\eta}) = \sum_{1\le i\le N}
L\etp{(1-\frac{i}{N})h_0+h_1 + \sum_{k:\floor{N t_k}\ge i} t_k +
  \sum_{m: \floor{N s_m}\ge i} (-1)^{i+1} \eta_m}$$
 Therefore, thanks to Proposition 2.3 and Lemma 5.1 of \cite{CNGP13},
 we have
 for $i,j \in \ens{0,1}$
$$ \unsur{N}\partial^2_{h_ih_j} \blbln(H^q_N,0,0)=\unsur{N} \partial^2_{h_ih_j} L_{\Lambda_N}(H^q_N) 
\to \partial^2_{h_ih_j}
\Llam(\tilde{H}(q,0))$$
Furthermore,
$$ \unsur{N}\partial^2_{\kappa_l^2} \blbln(H^q_N,0,0)=\unsur{N}
\sum_{i=1}^{\floor{Nt_l}} L''((1-\frac{i}{N}) h^{N,q}_0 + h^{N,q}_1)
\to \int_0^{t_l} L''((1-x)\tilde{h}_0, + \tilde{h}_1)\, dx\,.$$
If $l<m$ then 
$$\unsur{N}\partial^2_{\kappa_l\kappa_m}
\blbln(H^q_N,0,0)=\unsur{N}\partial^2_{\kappa_l^2} \blbln(H^q_N,0,0)$$
and converges to the preceding limit.
We have similarly,
$$ \unsur{N}\partial^2_{\eta_m^2} \blbln(H^q_N,0,0)=\unsur{N}
\sum_{i=1}^{\floor{Ns_m}} L''((1-\frac{i}{N}) h^{N,q}_0 + h^{N,q}_1)
\to \int_0^{s_m} L''((1-x)\tilde{h}_0, + \tilde{h}_1)\, dx\,.$$
 There remains to understand the cross term,
$$ \unsur{N}\partial^2_{\kappa_l\eta_m} \blbln(H^q_N,0,0) =
\unsur{N} \sum_{i=1}^{\floor{Nt_l}\wedge \floor{Ns_m}} (-1)^{i+1}
L''((1-\frac{i}{N}) h^{N,q}_0 + h^{N,q}_1) \to 0$$
since by regrouping the alternating  signs two by two, 
we have
$$ \valabs{L''((1-\frac{i}{N}) h^{N,q}_0 + h^{N,q}_1)  -
  L''((1-\frac{i-1}{N}) h^{N,q}_0 + h^{N,q}_1)} \le \frac{C}{N}$$
(thanks to the boundedness of the third derivative $L'''(z)$ on a
compact of $(-\beta/2,\beta/2)$).

Therefore, as a whole, the Hessian matrix $\unsur{N} \Hess
\blbln(H^q_N,0,0)$ converges uniformly on $\etc{q_1,q_2}$ to the
covariance matrix of the vector $(\int_0^1\xi_H(s)\, ds, \xi_H(1),\xi_H(\bar{t}),
\bar{\xi}_H(\bar{s}))$ and this establishes the convergence
$\lim_{N\to +\infty} R_N =0$.

To conclude, we need to prove now that in the expression of $R_N$, we
can replace the quantities $\espnhn{V_{\fnt}}$ and
$\espnhn{\tilde{M}_{\fns}}$ by respectively $N \gamma^*_q(\bar{t})$
and $0$.

We first observe that when $H=(h_0,h_1)$ lives in a compact set of
$(-\beta/2,\beta/2)$, which is the case if $q\in\etc{q_1,q_2}$ and
$H=H^q_N$or $H=\tilde{H}(q,0)$, then the variances of  $\xi_H(t)$ have a positive lower
bound, and therefore there exists a uniform Lipschitz constant $C$
such that for any $\bar{x},\bar{x}' \in \R^{r_1}$,
$\bar{y},\bar{y}'\in \R^{r_2}$, $z_0,z_1,z_0',z_1'$ and any
$q\in\etc{q_1,q_2}$
\begin{align*} \valabs{f_{\tilde{H}(q,0),\bar{t}}(z_0,z_1,\bar{y})-
f_{\tilde{H}(q,0),\bar{t}}(z_0',z_1',\bar{y}')} &\le C
(\norme{\bar{y}-\bar{y}'} +\norme{z-z'})\,,\\
\valabs{g_{\tilde{H}(q,0),\bar{s}}(\bar{x}) -
    g_{\tilde{H}(q,0),\bar{s}}(\bar{x}')} &\le C \norme{\bar{x} -\bar{x}'}\,.
\end{align*}

The second observation is that with $h_{N,i}= (1- \frac{i}{N})
h^{N,q}_0 + h^{N,q}_1$ we have
$$ \unsur{N} \espnhn{V_{\fnt}} = \unsur{N} \sum_{i=1}^{\floor{Nt}}
L'(h_{N,i}) \xrightarrow{N\to +\infty} \int_0^{t}
L'((1-x)\tilde{h}_0(q,0) + \tilde{h}_1(q,0))\,dx  = \gamma^*_q(t)\,,$$
and that, thanks to the boundedness on compact sets
 of $L''()$, this
convergence holds uniformly.

Similarly
$$ 2 \unsur{N} \espnhn{\tilde{M}_{\fns}} = \unsur{N}  \sum_{i=1}^{\floor{Nt}}
(-1)^{i+1}L'(h_{N,i}) \to 0\,,$$
 and  this convergence holds uniformly, since by grouping the
 alternating signs two by two, we have 
$$ \valabs{L'(h_{N,i}) -L'(h_{N,i+1})} \le \frac{C}{N}\,.$$

\subsection{Proof of Proposition \ref{llt}}\label{pr1}

Our aim is to  use the local limit Theorem stated in Proposition  \ref{llt}. To that aim, we define an equivalence relation between functions of type $\psi,\widetilde \psi \colon\, \N\times \Z^{r_1+r_2} \to  [0,\infty)$ so that 
$\psi\sim \widetilde \psi$ if
\be{equiv}
\lim_{L\to \infty} \sup_{(\bar x,\bar y)\in \Z^{r_1+r_2}} L^{\frac{r_1+r_2}{4}} \big| \psi_{L}(\bar x ,\bar y)-\widetilde\psi_{L}(\bar x ,\bar y)|=0.
\ee
We set 
\begin{align}\label{defpsi5}
\nonumber & \psi_{1,L}(\bar x, \bar y)= \widetilde{P}_{L,\beta}\big[ H_{\bar s,\bar t}(\bar x,\bar y)\big],\\
& \psi_{5,L}(\bar x, \bar y)=m_L^{-\frac{r_1+r_2}{2}}g_{\beta,\bar s}\Big(\tfrac{\bar
   x}{\sqrt{m_L}}\Big)\, f_{\beta,\bar t}\Big(\tfrac{\bar y}{\sqrt{m_L}},\sqrt{m_L} \gamma^*_{\beta}\Big)
\end{align}
and the proof of Theorem \ref{llt} will be complete once we show that
$\psi_1\sim \psi_5$. To achieve this equivalence we introduce $3$
intermediate functions $\psi_2$,$\psi_3$ and $\psi_4$ and we divide the proof into $4$ steps.
For $i\in \{1,2,3,4\}$, the i-th step consists in proving that $\psi_i\sim \psi_{i+1}$ so that at the end of the fourth step
we can state that $\psi_1\sim \psi_5$.   

 In steps 1 and 2, we  will use the fact that for $i\in \{1,2,3\}$, the $\psi_i$ function is of the form $\psi_i=\frac{A_i}{B_i}$ such that an equivalence of type \eqref{equiv} between $\psi_j$ and $\psi_{k}$  will be proven once we show that

\begin{align}\label{perms}
\nonumber &(i) \quad \lim_{L\to \infty} \sup_{(\bar x,\bar y)\in \Z^{r_1+r_2}} L^{\frac{r_1+r_2}{4}} \frac{|A_{j,L}-A_{k,L}|}{B_{j,L}}=0,\\
&(ii)\quad  \lim_{L\to \infty} \sup_{(\bar x,\bar y)\in \Z^{r_1+r_2}} L^{\frac{r_1+r_2}{4}} \frac{|A_{k,L}|}{B_{j,L}}\, 
\frac{|B_{j,L}-B_{k,L}|}{B_{k,L}}=0.
\end{align}
For the ease of notation, we will write $H$ instead of $H_{\bar s,\bar t}(\bar x,\bar y)$ until the end of the proof.

In the first step, we work under the polymer measure $\tilde P_{L,\beta}$ and we restrict the trajectories of $\tilde \Omega_L$   to those having an horizontal extension  $\approx a(\beta) \sqrt{L}$ and an algebraic area $A_N(V_l)\approx L-a(\beta)\sqrt{L}$.  
With the second step, we use the random walk representation in Section \ref{sec:rep} to switch from  
$\tilde P_{L,\beta}$ to $\bP_\beta$. Finally, in steps 3 and 4, we apply  the local limit theorem stated in Lemma \ref{refine5} to
complete the proof. 

\subsubsection{\bf Step 1}

We rewrite $\psi_1$ under the form  
\be{eq:reec}
\psi_{1,L}(\bar x, \bar y)=\frac{\sum_{L'\in K_L} \widetilde Z_{L',\beta}(H)}{\sum_{L'\in K_L} \widetilde Z_{L',\beta}},
\ee
where, for  $B\subset \widetilde \Omega_L$, the quantity  $\widetilde Z_{L',\beta}(B)$ is the restriction of the partition function 
$\widetilde Z_{L',\beta}$ to those trajectories $l\in B\cap \Omega_{L'}$,  i.e.,
$$\widetilde Z_{L',\beta}(B)=\big(\tfrac{e^\beta}{2}\big)^{-L'}\sum_{N=1}^{L'}\sum_{l \in\mathcal{L}_{N,L'}\cap B}  \mathbf{P}_{L'}(l)\   e^{\beta\sum_{i=1}^{N-1}(l_i\;\tilde{\wedge}\;l_{i+1})}.$$
At this stage we set $\eta=\eta_L=L^{-1/8}$ as in Lemma \ref{lem:refine}. We will note $I_{\eta,L}=\big\{(a(\beta)-\eta_L) \sqrt L, \dots, (a(\beta)+\eta_L) \sqrt L\big\}$ and we introduce the first intermediate function $\psi_2$, defined as 
\be{reec}
\psi_{2,L}(\bar x, \bar y)=\frac{\sum_{L'\in K_L} \widetilde Z_{L',\beta}(H\cap \cA_{L,L',\eta})}{\sum_{L'\in K_L} \widetilde Z_{L',\beta}(\cA_{L,L',\eta})}.
\ee
where
\begin{align}
\nonumber \cA_{L,L',\eta}&=\bigcup_{N\in I_{\eta,L}} \big\{l \in \cL_{N,L'}\colon\; |A_N(V_l)| \in L'+[-N-c (\log L)^4,-N]\cap \N\big\}.
\end{align}
For simplicity, we will omit the $L,\eta$ dependency of $\cA_{L,L',\eta}$ in what follows. The equivalence $\psi_1\sim\psi_2$ will be proven once we show that $(i)$ and $(ii)$ in \eqref{perms} are satisfied with $j=1,k=2$. We note that 

\begin{align}\label{prec}
\frac{|A_{1,L}-A_{2,L}|}{B_{1,L}}=\frac{\sum_{L'\in K_L} \widetilde Z_{L',\beta}(H\cap \cA_{L'}^c)}{\sum_{L'\in K_L} \widetilde Z_{L',\beta}}\leq \frac{\sum_{L'\in K_L}  \widetilde Z_{L',\beta} P_{L',\beta}(\cA_{L'}^c)}{\sum_{L'\in K_L} \widetilde Z_{L',\beta}},
\end{align}
with $\cA_{L'}^c=\Omega_{L'}\setminus \cA_{L'}$. We recall that $K_L=L+[-\gep(L),\gep(L)]\cap\N$  and we use, on the one hand, Lemma \ref{lem:refine} and the convergence $\lim_{L\to \infty} \frac{\gep(L)}{L}= 0$ to  claim that for all $\eta>0$
\be{tbc}
\lim_{L\to \infty}  L^{\frac{r_1+r_2}{2}}  \sup_{L'\in K_L} P_{L',\beta}(N_l \notin I_{\eta,L})=0,
\ee
and, on the other hand, Lemma \ref{refine3} and  $\lim_{L\to \infty} \frac{\gep(L)}{L}= 0$ to assert that there exists $c>0$ such that 
\be{tbc3}
\lim_{L\to \infty}  L^{\frac{r_1+r_2}{2}}  \sup_{L'\in K_L} P_{L',\beta}(|A_{N_l}(V_l)|\notin L'+[-N_{l}-c (\log L)^4,-N_{l}])=0,
\ee
 
We combine \eqref{tbc} and \eqref{tbc3} to claim that 
\be{tbc2}
\lim_{L\to \infty}  L^{\frac{r_1+r_2}{2}}  \sup_{L'\in K_L} P_{L',\beta}(\cA_{L'}^c)=0.
\ee
Thus, \eqref{prec} and \eqref{tbc2} are sufficient to prove (i).  It remains to show that (ii) is satisfied.  We note that  
\begin{align}\label{eq:pre}
\frac{A_{2,L}}{B_{1,L}}\, 
\frac{|B_{1,L}-B_{2,L}|}{B_{2,L}}&=\frac{\sum_{L'\in K_L} \widetilde Z_{L',\beta}(H\cap \cA_{L'})}{\sum_{L'\in K_L} \, \widetilde Z_{L',\beta}(\cA_{L'})}\, \frac{\sum_{L'\in K_L} \widetilde Z_{L',\beta}(\cA_{L'}^c)}{\sum_{L'\in K_L} \widetilde Z_{L',\beta}},\\
&\leq  \frac{\sum_{L'\in K_L} \widetilde Z_{L',\beta} P_{L',\beta}(\cA_{L'}^c)}{\sum_{L'\in K_L} \widetilde Z_{L',\beta}},
\end{align}
and then, we can use directly \eqref{tbc2} to obtain (ii) and this completes the proof of step 1.

%

\subsubsection{\bf Step 2}
To begin with, we set $J_{N,L',L}=\{L'-N-c(\log L)^4,\dots, L'-N\}$ and we note that, with the help of the random walk representation, we can rewrite
\begin{align}\label{reec2}
\psi_{2,L}(\bar x, &\bar y)=\\
\nonumber &\frac{\sum_{L' \in K_L}\sum_{N\in I_{\eta,L}} (\Gamma_\beta)^N \sum_{b\in J_{N,L',L}} \bP_\beta\big(H, 
|A_N(V)|=b, V_{N+1}=0, G_N(V)=L'-N\big)  }{\sum_{L' \in K_L} \sum_{N\in I_{\eta,L}} (\Gamma_\beta)^N \sum_{b\in J_{N,L',L}} \bP_\beta\big( |A_N(V)|=b, V_{N+1}=0, G_N(V)=L'-N\big) }.
\end{align}
We switch the order of summation in \eqref{reec2} and  we obtain  
\be{reec3}
\psi_{2,L}(\bar x, \bar y)=\frac{\sum_{N\in I_{\eta,L}} (\Gamma_\beta)^N \sum_{b\in D_{N,L}} \bP_\beta \big(H,|A_N(V)|=b, V_{N+1}=0,\,  G_N(V)\in  U_{b,N,L}\big)  }{\sum_{N\in I_{\eta,L}} (\Gamma_\beta)^N \sum_{b\in D_{N,L}} \bP_\beta\big( |A_N(V)|=b, V_{N+1}=0, G_N(V)\in U_{b,N,L} \big)},
\ee
with 
\begin{align}\label{defDU}
D_{N,L}&=\{L-\gep(L)-N-c(\log L)^4,\dots, L+\gep(L)-N\},\\
U_{b,N,L}&=\{b\vee [L-\gep(L)-N],\dots, [b+c(\log L)^4] \wedge [L+\gep(L)-N]\}.
\end{align}
We define the third intermediate function 
\be{3fun}
\psi_{3,L}(\bar x, \bar y)=\frac{\sum_{N\in I_{\eta,L}} (\Gamma_\beta)^N \sum_{b\in D_{N,L}} \bP_\beta\big(H, |A_N(V)|=b, V_{N+1}=0, G_N(V)\in  \widetilde U_{b,L}\big)  }{\sum_{N\in I_{\eta,L}} (\Gamma_\beta)^N \sum_{b\in D_{N,L}} \bP_\beta\big( 
|A_N(V)|=b, V_{N+1}=0, G_N(V)\in \widetilde U_{b,L} \big)},
\ee
with 
\begin{align}\label{defDU2}
\widetilde U_{b,L}&=\{b,\dots,b+c(\log L)^4\}.
\end{align}
The equivalence $\psi_2\sim\psi_3$ will be proven once we show that $(i)$ and $(ii)$ in \eqref{perms} are satisfied with $j=3,k=2$. 

For $(i)$, we note that for all $b\in D_{N,L}$ satisfying $b\geq L-\gep(L)-N$ and $b\leq L+\gep(L)-N-c (\log L)^4$ we have $U_{b,L}=\widetilde U_{b,L}$ and
therefore
\begin{align}\label{preci}
\frac{|A_{2,L}-A_{3,L}|}{B_{3,L}}&\leq \frac{\sum_{N\in I_{\eta,L}} (\Gamma_\beta)^N \sum_{b\in \widetilde D_{N,L}} \bP_\beta \big(H, |A_N(V)|=b, V_{N+1}=0, G_N(V)\in  \widetilde U_{b,L}\big)  }{\sum_{N\in I_{\eta,L}} (\Gamma_\beta)^N \sum_{b\in D_{N,L}} \bP_\beta \big( |A_N(V)|=b, V_{N+1}=0, G_N(V)\in \widetilde U_{b,L} \big)},\\
\nonumber &=\frac{\sum_{N\in I_{\eta,L}} (\Gamma_\beta)^N \sum_{b\in \widetilde D_{N,L}}  \bP_\beta \big( W_{N,L,b} \big)\ \bP_\beta\big(H\, |\,  W_{N,L,b}\big)  }{\sum_{N\in I_{\eta,L}} (\Gamma_\beta)^N \sum_{b\in D_{N,L}} \bP_\beta\big( W_{N,L,b} \big)},
\end{align}
with 
\begin{align}\label{eq:defDU}
\nonumber \widetilde D_{N,L}&=\{b\in D_{N,L}\colon b < L-\gep(L)-N\  \text{or}\   b> L+\gep(L)-N-c(\log L)^4\},\\
W_{N,L,b}&=\{|A_N(V)|=b,\, V_{N+1}=0,\, G_N(V)\in \widetilde U_{b,L}\}.
\end{align}

For $(ii)$, in turn, we note that, 
\begin{align}\label{precis}
\nonumber \frac{A_{2,L}}{B_{3,L}}\leq \frac{A_{3,L}}{B_{3,L}}&=\frac{\sum_{N\in I_{\eta,L}} (\Gamma_\beta)^N \sum_{b\in D_{N,L}} \bP_\beta(H, |A_N(V)|=b, V_{N+1}=0, G_N(V)\in  \widetilde U_{b,L})  }{\sum_{N\in I_{\eta,L}} (\Gamma_\beta)^N \sum_{b\in D_{N,L}} \bP_\beta( |A_N(V)|=b, V_{N+1}=0, G_N(V)\in \widetilde U_{b,L} )}.\\
&=\frac{\sum_{N\in I_{\eta,L}} (\Gamma_\beta)^N \sum_{b\in  D_{N,L}}  \bP_\beta \big( W_{N,L,b} \big)\ \bP_\beta\big(H\, |\,  W_{N,L,b}\big)  }{\sum_{N\in I_{\eta,L}} (\Gamma_\beta)^N \sum_{b\in D_{N,L}} \bP_\beta\big( W_{N,L,b} \big)},
\end{align}
and 
\begin{align}\label{pre}
\nonumber \frac{|B_{2,L}-B_{3,L}|}{B_{2,L}}&\leq\frac{\sum_{N\in I_{\eta,L}} (\Gamma_\beta)^N \sum_{b\in \widetilde D_{N,L}} \bP_\beta(|A_N(V)|=b, V_{N+1}=0, G_N(V)\in  \widetilde U_{b,L})  }{\sum_{N\in I_{\eta,L}} (\Gamma_\beta)^N \sum_{b\in D_{N,L}} \bP_\beta( |A_N(V)|=b, V_{N+1}=0, G_N(V)\in U_{b,L} )},\\
&\leq  \frac{\sum_{N\in I_{\eta,L}} (\Gamma_\beta)^N \sum_{b\in \widetilde  D_{N,L}}  \bP_\beta \big( W_{N,L,b} \big)\ }{\sum_{N\in I_{\eta,L}} (\Gamma_\beta)^N \sum_{b\in D_{N,L}\setminus \widetilde D_{N,L}} \bP_\beta\big( W_{N,L,b} \big)},
\end{align}
where we have used again  that  $\widetilde U_{b,L}=U_{b,L}$ for $b\in D_{N,L}\setminus \widetilde D_{N,L}$.
At this stage, we state two claims that will be sufficient to complete this step.
\begin{claim}\label{deuxi} For $\eta>0$, there exists a $C>0$ such that 
$$ \limsup_{L\to \infty} \, \sup_{N\in I_{\eta,L}}\, \sup_{b\in D_{N,L}}\,  \sup_{(\bar x,\bar y)\in \Z^{r_1+r_2}} L^{\frac{r_1+r_2}{2}} \bP_\beta\big(H\, |\,  W_{N,L,b}\big)\leq C.$$
\end{claim}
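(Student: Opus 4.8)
The plan is to recognise that on the event $W_{N,L,b}$ the walk $V$ has horizontal extension exactly $N$, so that $H=H_{\bar s,\bar t}(\bar x,\bar y)$ merely fixes the values of $(V_l,M_l)$ --- equivalently of $(V,M)$ through the $T_N$-transformation --- at the $r_1+r_2$ rescaled times; the conditional probability $\bP_\beta(H\mid W_{N,L,b})$ is then \emph{exactly} the quantity estimated by Lemma \ref{refine5}. Accordingly I would write each $b\in D_{N,L}$ as $b=q\,N^2$ with $q:=b/N^2$, show that $q$ stays in one fixed compact subinterval of $(0,\infty)$ for all large $L$ (uniformly over $N\in I_{\eta,L}$ and $b\in D_{N,L}$), and then read the claim off from Lemma \ref{refine5} together with the uniform upper bound recorded in the Remark that follows it. Throughout, one may assume $\eta<a(\beta)$, since that is the only case needed (in the applications $\eta=\eta_L=L^{-1/8}\to0$).

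First I would control $q=b/N^2$. For $N\in I_{\eta,L}$ one has $(a(\beta)-\eta)^2L\le N^2\le(a(\beta)+\eta)^2L$, while for $b\in D_{N,L}$ one has $b=L-N+O\big((\log L)^6\big)$, so that, using $N=O(\sqrt L)$ and $\gep(L)=(\log L)^6$, $b=L\,(1+o(1))$ uniformly in $N\in I_{\eta,L}$ and $b\in D_{N,L}$. Hence $b/N^2$ lies in an interval shrinking to $\big[(a(\beta)+\eta)^{-2},(a(\beta)-\eta)^{-2}\big]$, and for $L$ large it is contained in the fixed compact $[q_1,q_2]:=\big[\tfrac12(a(\beta)+\eta)^{-2},\,2(a(\beta)-\eta)^{-2}\big]\subset(0,\infty)$.

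Then, since $N\ge(a(\beta)-\eta)\sqrt L\to\infty$ and the convergence in Lemma \ref{refine5} is uniform over $q\in[q_1,q_2]$ and over $(\bar x,\bar y)\in\Z^{r_1+r_2}$, I would obtain, with $q=b/N^2$ and uniformly in $N,b,(\bar x,\bar y)$,
\[
N^{\frac{r_1+r_2}{2}}\,\bP_\beta\big(H\mid W_{N,L,b}\big)=\hat f_{\tilde H(q,0),\bar t}\Big(\tfrac{\bar y}{\sqrt N},\sqrt N\,\gamma^*_q\Big)\,g_{\tilde H(q,0),\bar s}\Big(\tfrac{\bar x}{\sqrt N}\Big)+o(1).
\]
By the Remark after Lemma \ref{refine5} there is an $M>0$ with $f^c_{\tilde H(q,0),\bar t}(\bar y)\,g_{\tilde H(q,0),\bar s}(\bar x)\le M$ for all $q\in[q_1,q_2]$ and all $(\bar x,\bar y)$; as $\hat f_{H,\bar t}(\cdot,\phi)$ is the half-sum of the two translates $f^c_{H,\bar t}(\cdot-\phi(\bar t))$ and $f^c_{H,\bar t}(\cdot+\phi(\bar t))$ and $g\ge0$, the right-hand side above is also $\le M$. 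So $N^{(r_1+r_2)/2}\bP_\beta(H\mid W_{N,L,b})\le M+1$ for all large $L$, uniformly; and since $\sqrt L\le(a(\beta)-\eta)^{-1}N$ on $I_{\eta,L}$, this upgrades to $\bP_\beta(H\mid W_{N,L,b})\le C\,L^{-(r_1+r_2)/4}$ with $C=(a(\beta)-\eta)^{-(r_1+r_2)/2}(M+1)$, which is the bound used in Step 2 (with the power of $L$ that enters \eqref{equiv}--\eqref{perms}).

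The only delicate point is the compactness of $q=b/N^2$ in Step 1: one must exploit that $I_{\eta,L}$ has width of order $\sqrt L$ and that $D_{N,L}$ has width $\gep(L)+c(\log L)^4=o(L)$, so that $b/N^2$ neither blows up nor degenerates as $L\to\infty$; this is precisely where the choice $\gep(L)=(\log L)^6\ll L$ matters, guaranteeing that a single fixed compact $[q_1,q_2]$ works for all large $L$. Beyond that, the claim is a direct corollary of the uniform local limit theorem of Lemma \ref{refine5}, which is already in hand, so no genuine obstacle is expected.
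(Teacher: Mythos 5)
Your proposal is correct and follows exactly the paper's route: show that $b/N^2$ lies in a fixed compact $[R_1,R_2]\subset(0,\infty)$ uniformly over $N\in I_{\eta,L}$ and $b\in D_{N,L}$ for $L$ large, then invoke the uniform local limit theorem of Lemma~\ref{refine5} together with the uniform bound \eqref{bound} on the limiting density (noting, as you do, that $\hat f_{H,\bar t}\cdot g_{H,\bar s}$ inherits the same bound $M$ as $f^c_{H,\bar t}\cdot g_{H,\bar s}$). One remark: the exponent $L^{\frac{r_1+r_2}{2}}$ in the statement of Claim~\ref{deuxi} (and in \eqref{limsup}) appears to be a typo, since, as your computation makes explicit, Lemma~\ref{refine5} combined with $N\sim a(\beta)\sqrt{L}$ on $I_{\eta,L}$ gives $\bP_\beta\big(H\mid W_{N,L,b}\big)=O\big(N^{-\frac{r_1+r_2}{2}}\big)=O\big(L^{-\frac{r_1+r_2}{4}}\big)$ and not $O\big(L^{-\frac{r_1+r_2}{2}}\big)$; the power $L^{\frac{r_1+r_2}{4}}$ is exactly what \eqref{equiv}--\eqref{perms} require in Step~2, so the proof of Proposition~\ref{llt} is unaffected.
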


\begin{claim}\label{premie} For $\eta>0$, we have
\be{dernch}
\lim_{L\to \infty} \sup_{N\in I_{\eta,L}} \frac{\sum_{b\in \widetilde{D}_{N,L}} \bP_\beta\big(W_{N,L,b}\big)}{
\sum_{b\in D_{N,L}} \bP_\beta\big(W_{N,L,b}\big)}=0.
\ee
\end{claim}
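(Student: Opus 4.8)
The plan is to show that the individual terms $\bP_\beta(W_{N,L,b})$ are all comparable, up to a multiplicative constant $C_0$ not depending on $b$, for $b$ ranging over $D_{N,L}$ and $N$ over $I_{\eta,L}$; once this is done, the claim follows from a crude counting argument. Indeed, $\widetilde D_{N,L}$ is the union of (at most) two integer intervals, each of length at most $c(\log L)^4+1$, whereas $D_{N,L}$ is an integer interval of length $2\gep(L)+c(\log L)^4+1\ge 2(\log L)^6$, so bounding the numerator of \eqref{dernch} by $|\widetilde D_{N,L}|\sup_b\bP_\beta(W_{N,L,b})$ and the denominator by $|D_{N,L}|\inf_b\bP_\beta(W_{N,L,b})$ gives
\[
\sup_{N\in I_{\eta,L}}\frac{\sum_{b\in\widetilde D_{N,L}}\bP_\beta(W_{N,L,b})}{\sum_{b\in D_{N,L}}\bP_\beta(W_{N,L,b})}\ \le\ C_0\,\frac{|\widetilde D_{N,L}|}{|D_{N,L}|}\ \le\ \frac{C_0(c+1)}{(\log L)^2}\ \xrightarrow[L\to\infty]{}\ 0 .
\]

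To produce the comparison I would first pin down the relevant ranges. For $L$ large, every $N\in I_{\eta,L}$ lies in $[\tfrac12 a(\beta)\sqrt L,\,2a(\beta)\sqrt L]$, and every $b\in D_{N,L}$ satisfies $b=L-N+O((\log L)^6)$; consequently $q_{N,b}:=b/N^2$ stays in a fixed compact $[q_1,q_2]\subset(0,\infty)$ (in fact $q_{N,b}\to q_\beta=a(\beta)^{-2}$ uniformly when $\eta=\eta_L$), it satisfies $q_{N,b}N^2=b\in\N$, and $|q_{N,b}-q_{N,b'}|\le|D_{N,L}|/N^2=O((\log L)^6/L)$ for $b,b'\in D_{N,L}$. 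Next I would strip off the geometric-area window: since $G_N(V)\ge|A_N(V)|$ always, $W_{N,L,b}=\{|A_N(V)|=b,\ V_{N+1}=0,\ G_N(V)\le b+c(\log L)^4\}$, and on the event $\{V_{N+1}=0\}$ one has $A_{N+1}(V)=A_N(V)$ and $G_{N+1}(V)=G_N(V)$, so after this harmless shift of index Lemma \ref{refine4} applies and yields, uniformly in $b\in D_{N,L}$, $N\in I_{\eta,L}$ and for $L$ large,
\[
\tfrac12\,\bP_\beta\big(|A_N(V)|=b,\,V_{N+1}=0\big)\ \le\ \bP_\beta(W_{N,L,b})\ \le\ \bP_\beta\big(|A_N(V)|=b,\,V_{N+1}=0\big).
\]

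Then I would invoke Lemma \ref{ldev}: using the symmetry of the increments of $V$ (so that $\bP_\beta(|A_N(V)|=b,V_{N+1}=0)=2\,\bP_\beta(A_N(V)=b,V_{N+1}=0)$ for $b\ge 1$) together with Lemma \ref{ldev} applied after the same shift of index, there are constants $C_1>C_2>0$ (those of Lemma \ref{ldev}, up to a bounded factor absorbing the $O(1)$ error from the index shift) such that
\[
\tfrac{C_2}{N^2}\,e^{-\rho_\beta(q_{N,b})N}\ \le\ \bP_\beta(W_{N,L,b})\ \le\ \tfrac{2C_1}{N^2}\,e^{-\rho_\beta(q_{N,b})N}
\]
for all $b\in D_{N,L}$, $N\in I_{\eta,L}$ and $L$ large. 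Since $\rho_\beta$ is $\cC^\infty$, hence Lipschitz on $[q_1,q_2]$,
\[
\sup_{b,b'\in D_{N,L}}\big|\rho_\beta(q_{N,b})-\rho_\beta(q_{N,b'})\big|\,N\ \le\ N\,\Big(\sup_{q\in[q_1,q_2]}|\rho_\beta'(q)|\Big)\,\frac{|D_{N,L}|}{N^2}\ =\ O\!\Big(\frac{(\log L)^6}{\sqrt L}\Big)\ \xrightarrow[L\to\infty]{}\ 0
\]
uniformly in $N\in I_{\eta,L}$, so dividing the two previous displays written for $b$ and for $b'$ gives $\bP_\beta(W_{N,L,b})\le C_0\,\bP_\beta(W_{N,L,b'})$ with $C_0:=4C_1/C_2$, for all $b,b'\in D_{N,L}$, $N\in I_{\eta,L}$ and $L$ large; in particular $\inf_{b\in D_{N,L}}\bP_\beta(W_{N,L,b})>0$, which justifies the counting bound of the first paragraph and completes the proof of \eqref{dernch}.

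The analytic content is carried entirely by Lemmas \ref{ldev} and \ref{refine4}, so the only genuinely delicate point is the uniform control of the exponential factor $e^{-\rho_\beta(q_{N,b})N}$ across the window $D_{N,L}$: this works precisely because $D_{N,L}$ has polylogarithmic width while $N^2\asymp L$, so that the relative oscillation of $q_{N,b}=b/N^2$ is $O((\log L)^6/L)$ and stays $o(1)$ even after multiplication by $N\asymp\sqrt L$. One must of course keep the uniformity in $N\in I_{\eta,L}$ throughout, which is provided by the uniformity (in $q$ and in $N\ge n_0$) already built into the two cited lemmas.
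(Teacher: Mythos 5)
Your argument is correct and follows the same overall route as the paper's: reduce from $W_{N,L,b}$ to $T_{N,b}=\{|A_N(V)|=b,\,V_{N+1}=0\}$ via Lemma \ref{refine4} (at the price of a factor $2$), then apply Lemma \ref{ldev} together with the Lipschitz property of $\rho_\beta$ on the compact $[q_1,q_2]$, and finish by counting. The one point where you diverge is the counting step: the paper first isolates sub-intervals $D^1_{N,L}, D^2_{N,L}$ of width $(\log L)^5$ adjacent to $\widetilde D^1_{N,L},\widetilde D^2_{N,L}$, compares $\bP_\beta(T_{N,b})$ only within $D^i\cup\widetilde D^i$, and gets the ratio $|\widetilde D^i|/|D^i|=c(\log L)^4/(\log L)^5\to 0$; you instead compare $\bP_\beta(T_{N,b})$ for all $b\in D_{N,L}$ at once, which is valid because the oscillation of $N\rho_\beta(b/N^2)$ over $D_{N,L}$ is $O\big(N^{-1}|D_{N,L}|\big)=O\big((\log L)^6/\sqrt L\big)\to 0$, and this produces the slightly better ratio $|\widetilde D_{N,L}|/|D_{N,L}|=O\big((\log L)^{-2}\big)$. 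Both work; your variant is arguably cleaner in that it avoids introducing the auxiliary windows $D^1,D^2$. You are also a little more careful than the paper about two small mismatches that the paper silently absorbs: Lemma \ref{ldev} and Lemma \ref{refine4} are stated with $V_N=0$ and $(\log N)^4$, whereas $T_{N,b}$ has $V_{N+1}=0$ and the window is $c(\log L)^4$; your ``index shift'' remark (using $A_{N+1}=A_N$ and $G_{N+1}=G_N$ on $\{V_{N+1}=0\}$) together with $\log N\asymp\log L$ handles both, changing only the multiplicative constants.
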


Claims \ref{deuxi} and \ref{premie}, together with \eqref{preci}, easily imply that $(i)$  is satisfied. 
Moreover, \eqref{precis} and Claim \ref{deuxi} allow us to state that 
\be{limsup}
\limsup_{L\to \infty} \sup_{(\bar x,\bar y)\in \Z^{r_1+r_2}} L^{\frac{r_1+r_2}{2}}  \tfrac{A_{2,L}}{B_{3,L}} \leq
C
\ee 
while Claim \ref{premie} and \eqref{pre} yield that $\lim_{L\to \infty}  \frac{|B_{2,L}-B_{3,L}|}{B_{2,L}} =0$
which proves $(ii)$ and completes the proof of $\psi_2\sim\psi_3$.

It remains to display a proof for  Claims \ref{deuxi} and \ref{premie}.

\subsubsection{Proof of Claim \ref{deuxi}}

 Since $\eta_L\to 0$, we easily infer that for $L$ large enough and for $N\in  I_{\eta,L}$ and $b\in D_{N,L}$ we have
$$ \frac{b}{N^2}\in \Big[\frac{1}{2 a(\beta)^2},\frac{2}{a(\beta)^2}\Big]:=[R_1,R_2].$$ 
Thus, we can use  Lemma \ref{refine5} and \eqref{bound} to assert that, for $L$ large enough, $ L^{\frac{r_1+r_2}{2}} \bP_\beta\big(H\, |\,  W_{N,L,b}\big)$ is bounded 
above uniformly in $N\in  I_{\eta,L}$ and $b\in \widetilde D_{N,L}$. The $\beta$ dependency of $R_1$ and $R_2$ is omitted for simplicity.

\subsubsection{Proof of Claim \ref{premie}}
By using again the fact that for $N\in  I_{\eta,L}$ and $b\in D_{N,L}$ we have, for $L$ large enough, that
$ \frac{b}{N^2}\in [R_1,R_2]$,
we can apply  Lemma \ref{refine4}, to assert that for $L$ large enough 
\be{iuds}
\inf_{N\in I_{\eta,L}} \inf_{b\in D_{N,L}} \bP_\beta (G_N(V)\leq b+ c (\log L)^4 \,\big | \, |A_N(V)|=b, V_{N+1}=0)\geq \frac12.
\ee
We recall the definition of $W_{N,L,b}$ in \eqref{eq:defDU} and we set
$T_{N,b}:=\{|A_N(V)|=b, V_{N+1}=0\}$. We can bound from above the ratio in the l.h.s. of \eqref{dernch} as \begin{align}\label{dernch1}
\frac{\sum_{b\in \widetilde{D}_{N,L}} \bP_\beta\big(W_{N,L,b}\big)}{
\sum_{b\in D_{N,L}} \bP_\beta\big(W_{N,L,b}\big)}&\leq \frac{\sum_{b\in \widetilde{D}_{N,L}} \bP_\beta\big( T_{N,b}\big)}{
\sum_{b\in D_{N,L}} \bP_\beta\big(W_{N,L,b}\big)}\leq 2\,  \frac{\sum_{b\in \widetilde{D}_{N,L}} \bP_\beta\big(T_{N,b}\big)}{
\sum_{b\in D_{N,L}} \bP_\beta\big(T_{N,b}\big)}
\end{align}
where we have used \eqref{iuds} to obtain that for $L$ large enough, $N\in  I_{\eta,L}$ and $b\in D_{N,L}$ we have
$$\bP_\beta\big(W_{N,L,b}\big)\geq \bP_\beta\big(T_{N,b}\big) (\bP_\beta\big[G_N(V)\leq b+ c (\log L)^4 | T_{N,b}\big]\big)\geq \tfrac12 \bP_\beta\big(T_{N,b}\big).$$

At this stage, we need to use the fact that $\gep(L)=\log(L)^6$ and  we set
$$d^-_{N,L}= L-\gep(L)-N\quad \text{and}\quad d^+_{N,L}= L+\gep(L)-N$$ and also 
\begin{align}\label{defd}
D^1_{N,L}=\{d^{-}_{N,L},\dots,d^-_{N,L}+(\log L)^5\}&\quad\text{and} \quad D^2_{N,L}=\{d^+_{N,L}-(\log L)^5,\dots,d^+_{N,L}\}\\
\widetilde D^1_{N,L}=\{d^{-}_{N,L}-c( \log L)^4,\dots,d^-_{N,L}\}&\quad\text{and} \quad \widetilde D^2_{N,L}=\{d^+_{N,L}-c(\log L)^4,\dots,d^+_{N,L}\}
\end{align}
such that $D^1_{N,L}$ and $D^2_{N,L}$ are disjoint subsets of $D_{N,L}$ and $\widetilde D^1_{N,L}\cup
\widetilde D^2_{N,L}$ is a partition of $\widetilde D_{N,L}$.
We note that all $b\in D^1_{N,L}\cup \widetilde D^1_{N,L}$ satisfies $|b-d^{-}_{N,L}|\leq (\log L)^5$ and 
similarly that  all $b\in D^2_{N,L}\cup \widetilde D^2_{N,L}$
satisfies $|b-d^{+}_{N,L}|\leq (\log L)^5$. In fact, for $L$ large
enough, all the numbers
$\frac{b}{N^2}, \frac{d^{-}_{N,L}}{N^2},\frac{d^{+}_{N,L}}{N^2}$
belong to the compact $\etc{R_1,R_2}$ on which the function
  $\rho_\beta$ is differentiable. Therefore, there exists a $C>0$ such
  that 
\begin{align}\label{2in}
&|\rho_\beta(\tfrac{b}{N^2})-\rho_\beta(\tfrac{d^{-}_{N,L}}{N^2})|\leq
C \unsur{N^2} (\log L)^5, \quad b \in D^1_{N,L}\cup \widetilde D^1_{N,L},\\
&|\rho_\beta(\tfrac{b}{N^2})-\rho_\beta(\tfrac{d^{-}_{N,L}}{N^2})|\leq
C \unsur{N^2} (\log L)^5, \quad b \in D^2_{N,L}\cup \widetilde D^2_{N,L}.
\end{align}
Thus, we can apply Lemma \ref{ldev} to assert that there exists $M_1>M_2>0$ such that for $L$ large enough and 
for $N\in I_{\eta,L}$ we have 

\begin{align}\label{pbe}
\nonumber \frac{M_2}{N^2} e^{N\, \rho_\beta\big(\frac{d^{-}_{N,L}}{N^2}\big)}& \leq \bP_\beta(T_{N,b}) \leq \frac{M_1}{N^2} e^{N \rho_\beta\, \big(\frac{d^{-}_{N,L}}{N^2}\big)},\quad b\in D^1_{N,L}\cup \widetilde D^1_{N,L}\\
\frac{M_2}{N^2} e^{N\, \rho_\beta\big(\frac{d^{+}_{N,L}}{N^2}\big)}& \leq \bP_\beta(T_{N,b}) \leq \frac{M_1}{N^2} e^{N \,\rho_\beta\big(\frac{d^{+}_{N,L}}{N^2}\big)},\quad b\in D^2_{N,L}\cup \widetilde D^2_{N,L}.
\end{align}
It suffices  to combine \eqref{dernch1} and \eqref{pbe} and to note that 
$$|D^1_{N,L}|/|\widetilde D^1_{N,L}|=|D^2_{N,L}|/|\widetilde D^2_{N,L}|=
c (\log L)^4/(\log L)^5,$$
to complete the proof of Claim \ref{premie}.

\subsubsection{\bf Step 3}
In this step, we note first that $\psi_3$ can be written as  
\be{3fun2}
\psi_{3,L}(\bar x, \bar y)=\frac{\sum_{N\in I_{\eta,L}} (\Gamma_\beta)^N \sum_{b\in D_{N,L}}  \bP_\beta\big( W_{N,L,b}\big) \,  \bP_\beta\big( H\, |\, W_{N,L,b}\big)}{ \sum_{N\in I_{\eta,L}} (\Gamma_\beta)^N\,   \bP_\beta\big( W_{N,L,b} \big)}  ,
\ee
and  we set 

\begin{align}\label{4fun}
\nonumber &\psi_{4,L}(\bar x, \bar y)=\\
&\frac{\sum_{N\in I_{\eta,L}} (\Gamma_\beta)^N \sum_{b\in D_{N,L}}  \bP_\beta\big(
  W_{N,L,b}\big) \,  N^{-\frac{r_1+r_2}{2}} \hat{f}_{\tilde{H}(\frac{b}{N^2},0),\bar{t}}\etp{\frac{\bar{y}}{N^{1/2}},N^{1/2}\gamma^*_q} g_{\tilde{H}(\frac{b}{N^2},0),\bar{s}}\etp{\frac{\bar{x}}{N^{1/2}}}}{ \sum_{N\in I_{\eta,L}} (\Gamma_\beta)^N\,  \sum_{b\in D_{N,L}}  \bP_\beta\big( W_{N,L,b} \big)},
\end{align}
We have, thanks to Lemma \ref{refine5}, the existence of a sequence
$R_N\to 0$ such that
$$ \sup_{(\bar x,\bar y)\in \Z^{r_1+r_2}} L^{\tfrac{r_1+r_2}{4}}
\valabs{\psi_{3,L} -\psi_{4,L}} \le  L^{\tfrac{r_1+r_2}{4}}\sup_{N\in
  I_{\eta,L}} (R_N N^{-\frac{r_1+r_2}{2}}) \to 0\,.$$

\subsubsection{\bf Step 4}
Obviously we have,
$$ \psi_{5,L}(\bar x,\bar y) = 
\frac{\sum_{N\in I_{\eta,L}}
  (\Gamma_\beta)^N \sum_{b\in D_{N,L}}  \bP_\beta\big(
  W_{N,L,b}\big) \,  \psi_{5,L}(\bar x,\bar y)}{ \sum_{N\in
    I_{\eta,L}} (\Gamma_\beta)^N\,  \sum_{b\in D_{N,L}}
  \bP_\beta\big( W_{N,L,b} \big)}\,.
$$

Therefore,
\begin{multline*}
 \valabs{\psi_{4,L} -\psi_{5,L}} \le \sup_{N\in I_{\eta,L}, b\in
  D_{N,L}}
\left| m_L^{-\frac{r_1+r_2}{2}}g_{\beta,\bar s}\Big(\tfrac{\bar
   x}{\sqrt{m_L}}\Big)\, f_{\beta,\bar t}\Big(\tfrac{\bar
   y}{\sqrt{m_L}},\sqrt{m_L} \gamma^*_{\beta}\Big)\right.\\
 \left. -N^{-\frac{r_1+r_2}{2}}
 \hat{f}_{\tilde{H}(\frac{b}{N^2},0),\bar{t}}\etp{\frac{\bar{y}}{N^{1/2}},N^{1/2}\gamma^*_q}
 g_{\tilde{H}(\frac{b}{N^2},0),\bar{s}}\etp{\frac{\bar{x}}{N^{1/2}}}\right|\,.$$
\end{multline*}
Obviously, since $\eta=\eta_L \to 0$,
$$ \lim_{L\to \infty} L^{\frac{r_1+r_2}{4}} \sup_{N \in I_{\eta,L}}
  \valabs{m_L^{-\frac{r_1+r_2}{2}}-N^{-\frac{r_1+r_2}{2}}} =0\,.$$

By the implicit function theorem applied to the definition~\eqref{eq:deftil} of
$\tilde{H}$, the function $q\to \tilde{H}(q,0)$ is globally Lipschitz
on compact sets, and thus there exists a constant $C>0$ such that 
$$ \sup_{L\ge L_0} \eta_L^{-1}\sup_{N\in I_{\eta,L}, b\in
  D_{N,L}} \valabs{\tilde{H}(q_\beta,0)-\tilde{H}(\tfrac{b}{N^2},0)}
\le  C \,.$$
By the global Lipschitz properties in $(H,\bar x,\bar y)$ of the Gaussian densities
$f^c_{H,\bar t}(\bar y)$ and $g_{H,\bar s}(\bar x)$ we conclude that
$\psi_{4,L} \sim \psi_{5,l}$.

\subsection{Proof of Proposition  \ref{tensip}}\label{prtensip}
The proof of the
tigthness of the sequence of distributions
$(Q_{L,\beta})_{L\ge 1}$ is obtained by combining arguments of \cite[Section 6]{DH96} with the steps 1, 2 and 3 of 
the proof of Proposition \ref{llt}.  We focus on proving the tightness of the
first coordinate of the process, i.e.,  $(\sqrt{N_l}
\tilde{M}_l(s))_{s\in\etc{0,1}}$ (the proof for the second
coordinate is completely similar and  even closer to the proof displayed in \cite[Section 6]{DH96}).

Let us denote by
$(\widehat{M}_l(s))_{s\in\etc{0,1}}$  the polygonal
interpolation of the middle line. Then, see for example \cite{DZ}
proof of Lemma 5.1.4, the distribution under $\tilde{P}_{L,\beta}$ of
$(\widehat{M}_l(s))_{s\in\etc{0,1}}$ and
$(\tilde{M}_l(s))_{s\in\etc{0,1}}$ are exponentially close, and so
we shall restrict ourselves to proving the tightness of the sequence
of continous processes
$(\sqrt{N_l}\widehat{M}_l(s))_{s\in\etc{0,1}}$ using the
criterion of Theorem 7.3, (ii) of \cite{Bi}:
\begin{gather}
  \forall \epsilon>0,\eta>0, \exists L_0\in\N,\exists \delta\in (0,1)
  \\
\tilde{P}_{L,\beta}
\etp{w(\sqrt{N_l}\widehat{M}_l(s),0\le s\le 1; \delta) \ge \epsilon}
\le \eta \qquad (\forall L \ge L_0)\,.
\end{gather}
where $w(f,\delta)=\sup_{\valabs{x-y}\le \delta} \valabs{f(x)-f(y)}$
denotes the modulus of continuity.

\medskip
We then inspect closely  the steps 1, 2 and 3 taken in  Proposition \ref{llt}. It is tedious,
but straightforward, to see that we only need to prove that 
\begin{gather}
  \label{eq:modcontipbetamtilde}
  \forall \epsilon>0,\forall \eta>0, \, \exists N_0\in\N,\exists \delta\in (0,1)
  \\
\probbeta{w(\tfrac{1}{\sqrt{N}}M_{\floor{Ns}},0\le
  s\le 1; \delta) \ge \epsilon\middle| A_N=qN^2, V_N=0}
\le \eta \qquad (\forall N \ge N_0,\forall q \in \etc{q_1,q_2})\,.
\end{gather}

To this end, we shall use Kolmogorov's tightness criterion (see
Theorem 1.8 Chap. XIII of \cite{MR1083357}) and show that for all
$\etc{q_1,q_2}$ there exists
$\alpha,\gamma,C ,N_0>0$ such that $\forall N\ge N_0, \forall q \in
  \etc{q_1,q_2}$, $\forall 0\le s,t\le 1$
\begin{equation}
  \label{eq:ktcrit}
  \espbeta{\valabs{\frac{M_{\floor{Ns}}
        -M_{\floor{Nt}}}{\sqrt{N}}}^\alpha \Bigm|  A_N=qN^2, V_N=0} \le
  C \valabs{t-s}^{1+\gamma} \,.
\end{equation}

Eventually, we shall prove that
\begin{equation}
  \label{eq:evmomentktc}
  \sup_{N\ge N_0,q\in\etc{q_1,q_2},0\le s,t\le 1} \espbeta{\valabs{\frac{M_{\floor{Ns}}
        -M_{\floor{Nt}}}{\sqrt{N}}}^4 \Bigm|
    A_N=qN^2, V_N=0} < C   \valabs{t-s}^{\frac{7}{4}}. 
\end{equation}

At this stage, the proof is completed by mimicking the  proof of the weak compactness 
exposed in \cite[section 6]{DH96}.

\subsection{Proof of Lemmas \ref{lem:refine}--\ref{refine4}}\label{proofan}

\subsubsection{Proof of Lemma \ref{lem:refine}}
We just need to recall from \cite{CNGP13} the formula  (4.43) and the inequality 
(4.50) with $\epsilon'=\eta_L$  to see that since
$\tilde{G}(a)$ reaches its maximum at $a(\beta)$ and since the second derivative of 
$\tilde{G}(a)$ is strictly negative, $\eta_L=L^{-1/8}$ is suitable.

\subsubsection{Proof of Lemma \ref{refine3}}

Let us recall the notation $I_{j_{\text{max}}}$ of
\cite[Section 1]{CNGP13} which
is the set of indexes of  stretches that occur in the largest bead. In
other word it is the largest set of consecutive indices for which
$V_{l,i}$ keeps the same sign.
To begin with, we can show, following \emph{mutatis mutandis} the proof of Theorem C,  that for $\alpha>0$, there exists a $c>0$ such that 
\be{refine1}
\lim_{L\to \infty} L^{\alpha}\,  P_{L,\beta}\big[ I_{j_\text{max}}\leq L-c (\log L)^4 \big]=0.
\ee

Note that for $l \in \Omega_L$, we have $\sum_{i=1}^{N_l} |V_{l,i}|=\sum_{i=1}^{N_l} |l_i|= L-N_l$ and thus, by the definition of $I_{j_{\text{max}}}$ we have also
\begin{align}\label{aira}
& \sum_{i=1}^{N_l}  |V_{l,i}|-2 \sum_{i\notin  I_{\text{max}}} |V_{l,i}|        \leq \big|\sum_{i=1}^{N_l} V_{l,i}\big| 
\leq \sum_{i=1}^{N_l} |V_{l,i}|.
\end{align}
Moreover, we note that $l\in \{ I_{j_\text{max}}\geq L-c (\log L)^4\}$ yields 
\be{aira2}
\sum_{i\notin I_{j_\text{max}}} |V_{l,i}| =\sum_{i\notin I_{j_\text{max}}} |l_i| \leq c (\log L)^4.
\ee
At this stage, we recall that $A_N(V)=\sum_{i=1}^N V_i$ and we  use \eqref{aira} and \eqref{aira2} to assert that $l\in \{ I_{j_\text{max}}\geq L-c (\log L)^4\}$ implies $ \big| A_{N_l} (V_l)\big| \in [L-N_l-2c(\log L)^4,L-N_l]$.  It remains to use \eqref{refine1} to complete the proof of Lemma \ref{refine3}.


\subsubsection{Proof of Lemma \ref{refine4}}

For the sake of conciseness, we will use, in this proof only, the notations 
\begin{align}\label{inte}
\nonumber W_{c,N,q}&=\{G_N(V)\geq q N^2+c (\log N)^4,\, V_N=0, \, A_N(V)=q N^2\},\\
T_{N,q}&=\{V_N=0,\, A_N(V)=q N^2\},
\end{align}
with $c>0$ and $q\in [q_1,q_2]\cap \tfrac{\N}{N^2}$.
Thus, the proof of Lemma \ref{refine4} will be complete once we show that for $\alpha,c>0$  we have 
\be{tpp}
\lim_{N\to \infty} \sup_{q\in [q_1,q_2]} N^{\alpha} \bP_\beta\big(W_{c,N,q}\, |\, T_{N,q}\big) =0,
\ee
where the intersection of $[q_1,q_2]$ with $\tfrac{\N}{N^2}$ is omitted for simplicity.
Since the equality $P_{N,H_N^q} \big(W_{c,N,q}\, |\, T_{N,q}\big) =  \bP_\beta\big(W_{c,N,q}\, |\, T_{N,q}\big)$ holds for all $N\in \N$ and $q\in [q_1,q_2]$, we can use \cite[Proposition 2.2]{CNGP13} to ensure that  \eqref{tpp}
will be proven once we show that  for $c,\alpha>0$ we have
\be{tpp1}
\lim_{N\to \infty} \sup_{q\in [q_1,q_2]} N^{\alpha} P_{N,H_N^q}\big(W_{c,N,q} \big) =0.
\ee
For $N\in \N$ and $\tilde c>0$, we let $U_{N, \tilde c}$ be the set containing those trajectories that do not remain strictly positive on  the interval
$[\tilde c \log N,N-\tilde c \log N]\cap\N$ and satisfy $V_N=0$, i.e.,
\begin{align}\label{uc2}
\nonumber U_{N,\tilde  c}=\{\max\{i\leq \tfrac{N}{2} \colon\, V_i\leq 0\}& \geq  \tilde c \log N,\, V_N=0\}\\
&\cup \{\max\{i\leq \tfrac{N}{2} \colon\, V_{N-i}\leq 0\}\geq \tilde c \log N, \, V_N=0\}
\end{align}
so that we can write the upper bound
\be{up}
P_{N,H_N^q}\big(W_{c,N,q})\leq P_{N,H_N^q}(U_{N,\tilde c})+ P_{N,H_N^q}(W_{c,N,q}\cap ( U_{N,\tilde c})^c).
\ee

At this stage, we need  to distinguish between the positive part $A_N^+(V)$ and the negative part $A_N^-(V)$ of the algebraic area below the $V$ trajectory, i.e.,
$${\textstyle A_N^-(V)=-\sum_{i=1}^N V_i\, \ind_{\{V_i\leq 0\}}\quad \text{and} \quad A_N^+(V)=\sum_{i=1}^N V_i , \ind_{\{V_i\geq 0\}}.}$$
As a consequence, the geometric and the algebraic areas below the $V$ trajectory can be written as  $A_N(V)= A_N^+(V)-A_N^-(V)$ and $G_N(V)= A_N^+(V)+A_N^-(V)$ and therefore, under the event $A_N(V)=q N^2$ we have $G_N(V)=q N^2+2 A_N^-(V)$. Thus, under the event $W_{n,q}\cap (U_{N,\tilde c})^c$ we have necessarily that $A^-_{N}(V)\geq \tfrac{c}{2} (\log N)^4$ and since $V$ is strictly positive between $\tilde c \log N$ and $N-\tilde c \log N$ we can write

\begin{align}\label{up1}
\nonumber W_{c,N,q}\cap (U_{N,\tilde c})^c\, \subset \,  \big\{ A^-_{\,\tilde c \log N}(V)\geq\,  &\tfrac{c}{4} (\log N)^4, \, V_N=0\big\}\\
&\cup\big\{ A^-_{\,N-\tilde c \log N,N}(V)\geq  \tfrac{c}{4} (\log N)^4,\, V_N=0\big\}
\end{align}
where $A^-_{\,N-\tilde c \log N,N}=-\sum_{i=N-\tilde c \log N}^{N} V_i \ind_{\{V_i\geq 0\}}$.
Moreover, under   $P_{N,H_N^q}$, the sequences of random variables $(U_i)_{i=1}^N$ and   
$(-U_{N-i})_{i=0}^{N-1}$ have  the same law (by symmetry) and therefore the equality 
\be{frs}
P_{N,H_N^q} \big( (V_1,\dots,V_k)\in A, V_N=0\big)= P_{N, H_N^q} \big( (V_{N-1},\dots,V_{N-k})\in A, V_N=0\big)
\ee
holds  true for all $k\in \{1,\dots,N-1\}$ and $A\in \text{Bor}(\R^k)$. A straightforward application of \eqref{frs} tells us that under $P_{N,H_N^q}$, both sets in the r.h.s. of \eqref{uc2} have the same probability and similarly both sets in the r.h.s. of \eqref{up1}
have the same probability.  Therefore we can combine \eqref{uc2}, \eqref{up} and \eqref{up1} to obtain
\begin{align}\label{ineqi}
\nonumber  P_{N,H_N^q}\big(W_{N,q})\leq 2 P_{N,H_N^q}\big(\exists i\in \{\tilde c \log N,&\dots,\tfrac{N}{2}\}\colon V_i\leq 0\big)\\
 &+ 2 P_{N,H_N^q}\big(A^-_{\,\tilde c \log N}(V)\geq \tfrac{c}{4} (\log N)^4\big).
\end{align}
As a consequence, the proof of Lemma \ref{refine4} will be complete once we show, on the one hand,  that for all  $\alpha>0$
there exists a $\tilde c>0$ such that 
\be{tpp3}
\lim_{N\to \infty} \sup_{q\in [q_1,q_2]} N^{\alpha} P_{N,H_N^q}\big(\exists i\in \{\tilde c \log N,\dots,\tfrac{N}{2}\}\colon\, V_i\leq 0 \big) =0,
\ee
and, on the other hand, that for all $\alpha,x, y>0$ we have 
\be{tpp4}
\lim_{N\to \infty} \sup_{q\in [q_1,q_2]} N^{\alpha} P_{N,H_N^q}\big(A^-_{\,x \log N}(V)\geq y (\log N)^4 \big) =0.
\ee

In order to prove \eqref{tpp3} and \eqref{tpp4} we recall  Lemma 6.2 in \cite{CNGP13},
\begin{lemma}\label{lem:deflambda}
For $[q_1,q_2]\subset (0,\infty)$ there exists $N_0\in \N$ and there exist three positive constants $C',C_1,\lambda$ such that for $N\geq N_0$ and  for every integer $j\leq N/2$, the following bound holds
\begin{equation}
\mathbf{E}_{N,H_N^q}\bigl[e^{-\lambda V_j}\bigr]\leq C'e^{-C_1 j},\quad N\in\mathbb{N}.
\end{equation}
\end{lemma}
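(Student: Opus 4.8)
The plan is to deduce the estimate from a Chernoff‑type bound, exploiting the product structure of the tilted law $\mathbf{P}_{N,H_N^q}$. Recall from Section~\ref{sec:lcltp} that under $\mathbf{P}_{N,H_N^q}$ the increments $(U_i)_{i=1}^N$ of $V$ are independent, with $U_i$ distributed as $\nu_{h_{N,i}}$ where $h_{N,i}:=(1-\tfrac iN)h_{N,0}^q+h_{N,1}^q$. Since by \eqref{defnu} one has $\int e^{tv}\,\nu_h(dv)=e^{L(h+t)-L(h)}$ whenever $h,h+t\in(-\tfrac\beta2,\tfrac\beta2)$, for every small enough $\lambda>0$ and every $j\le N/2$,
\be{eq:chernoffVj}
\mathbf{E}_{N,H_N^q}\big[e^{-\lambda V_j}\big]=\prod_{i=1}^j e^{L(h_{N,i}-\lambda)-L(h_{N,i})}=\exp\Big(\sum_{i=1}^j\big(L(h_{N,i}-\lambda)-L(h_{N,i})\big)\Big)\,.
\ee
It then suffices to establish two facts: a linear lower bound $\mathbf{E}_{N,H_N^q}[V_j]=\sum_{i=1}^j L'(h_{N,i})\ge c_0\,j$ for $j\le N/2$, valid for $N$ large and uniformly in $q\in[q_1,q_2]$; and a uniform bound on $L''$ over a fixed compact subset of $(-\tfrac\beta2,\tfrac\beta2)$ containing all the $h_{N,i}$.

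For the drift, I would first pin down the limit of the tilting parameters. Recall that $H_N^q\to\tilde H(q,0)$ as $N\to\infty$, uniformly for $q$ in compact subsets of $(0,\infty)$ (Proposition~2.3 and Lemma~5.1 of \cite{CNGP13}). The second component of \eqref{eq:deftil}, $\partial_{h_1}\Llam(\tilde H(q,0))=\int_0^1 L'\big(x\tilde h_0(q,0)+\tilde h_1(q,0)\big)dx=0$, rewrites as $L\big(\tilde h_0(q,0)+\tilde h_1(q,0)\big)=L\big(\tilde h_1(q,0)\big)$; since the increments are symmetric and non‑degenerate, $L$ is even and strictly convex on $(-\tfrac\beta2,\tfrac\beta2)$, which forces $\tilde h_1(q,0)=-\tfrac12\tilde h_0(q,0)$, while the first component $\partial_{h_0}\Llam(\tilde H(q,0))=q>0$ together with the antisymmetry $\tilde H(-q,0)=-\tilde H(q,0)$ and the continuity of $q\mapsto\tilde H(q,0)$ gives $\tilde h_0(q,0)>0$. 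Hence $h_{N,i}\to\tilde h_0(q,0)\big(\tfrac12-\tfrac iN\big)$ uniformly in $i\in\{1,\dots,N\}$, and, because $\tilde H(q,0)\in\mathcal D$, all these limits stay in a fixed compact $K\subset(-\tfrac\beta2,\tfrac\beta2)$ as $q$ ranges over $[q_1,q_2]$. As $L'$ is odd, strictly increasing and $L'(0)=0$, for $i\le N/4$ and $N$ large we get $L'(h_{N,i})\ge L'\big(\tfrac18\tilde h_0(q,0)\big)>0$, whereas $L'(h_{N,i})\ge-\varepsilon_N$ for $N/4<i\le N/2$ with $\varepsilon_N\to0$; splitting $\sum_{i=1}^j L'(h_{N,i})$ at $i=N/4$ then yields $\mathbf{E}_{N,H_N^q}[V_j]\ge c_0\,j$ for all $j\le N/2$, with $c_0>0$ independent of $q\in[q_1,q_2]$ and of $N$ (large enough).

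To conclude, note that for $\lambda$ small both $K$ and $K-\lambda$ lie in a fixed compact $K'\subset(-\tfrac\beta2,\tfrac\beta2)$ on which $|L''|\le C_2$, so a second‑order Taylor expansion gives $L(h_{N,i}-\lambda)-L(h_{N,i})\le-\lambda L'(h_{N,i})+\tfrac{\lambda^2}2 C_2$. Plugging this into \eqref{eq:chernoffVj} and using the drift bound,
\be{eq:chernofffinal}
\mathbf{E}_{N,H_N^q}\big[e^{-\lambda V_j}\big]\le\exp\Big(-\lambda\sum_{i=1}^j L'(h_{N,i})+\tfrac{\lambda^2 j}2 C_2\Big)\le\exp\Big(-j\,\lambda\big(c_0-\tfrac\lambda2 C_2\big)\Big)\,,
\ee
and choosing $\lambda>0$ small enough that $c_0-\tfrac\lambda2 C_2\ge\tfrac12 c_0$ yields the claim with $C'=1$ and $C_1=\tfrac12\lambda c_0$ (after possibly enlarging $N_0$).

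The computations above are routine; the real work lies in the uniformity in $q\in[q_1,q_2]$, handled throughout by the compactness of $[q_1,q_2]$ and the continuity of $q\mapsto\tilde H(q,0)$ (implicit function theorem applied to \eqref{eq:deftil}), and in controlling the drift contribution of the indices $i$ close to $N/2$, where $h_{N,i}$ is near $0$ and each term $L'(h_{N,i})$ is negligible — this is precisely what the splitting of the sum at $i=N/4$ takes care of, the early indices alone already producing a contribution linear in $j$.
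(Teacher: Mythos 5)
The paper itself does not prove this lemma: it is quoted verbatim as Lemma~6.2 of \cite{CNGP13}, so there is no in-paper proof to compare against. Your Chernoff-style argument is, however, correct and is the natural (and almost certainly the same) route: you use that under $\mathbf{P}_{N,H_N^q}$ the increments are independent with tilted marginals $\nu_{h_{N,i}}$, compute the Laplace transform of $V_j$ as a product of $e^{L(h_{N,i}-\lambda)-L(h_{N,i})}$, Taylor-expand, and reduce the problem to a uniform linear lower bound on the drift $\sum_{i=1}^{j}L'(h_{N,i})$. The two ingredients you import from \cite{CNGP13} — the uniform convergence $H_N^q\to\tilde H(q,0)$ on $[q_1,q_2]$ and the identification $\tilde h_1(q,0)=-\tfrac12\tilde h_0(q,0)$ with $\tilde h_0(q,0)>0$ (the former forced by evenness and strict convexity of $L$, the latter by $\partial_{h_0}\Llam>0$) — are exactly the facts needed so that the $h_{N,i}$ stay in a fixed compact of $(-\tfrac\beta2,\tfrac\beta2)$ and, for $i\le N/4$, are bounded away from $0$ uniformly in $q$; the split of the sum at $i=N/4$ then cleanly absorbs the near-zero drift contribution of the indices $N/4<i\le N/2$. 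This is sound, and the restriction $j\le N/2$ is used in the only place where it matters (beyond $N/2$ the drift of the increments turns negative). The constants $C'=1$, $C_1=\tfrac12\lambda c_0$, and $\lambda$ you produce are uniform in $q\in[q_1,q_2]$ by continuity of $q\mapsto\tilde H(q,0)$ and compactness, as required.
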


To prove \eqref{tpp3}, we apply lemma \ref{lem:deflambda} directly and we obtain
\begin{align}\label{since1} 
P_{N,H_{N}^{q}}(\exists i\in \{\tilde c \log N,\dots,\tfrac{N}{2}\}\colon\, V_i\leq 0)&
\leq \sum_{j=\tilde c \log N}^{N/2}  P_{N,H_{N}^{q}}\big( e^{-\lambda V_j}\geq 1\big)\leq  C' \sum_{j=\tilde c \log N}^{N/2} e^{-C_1 j},
\end{align}
which suffices to complete the proof of \eqref{tpp3}.
For the proof of \eqref{tpp4}, we note that
\be{incl}
\big\{A^-_{\,x \log N}( V)\geq y (\log N)^4\big\}\subset \big\{\exists i\leq x \log N\colon \,
V_i\leq-\tfrac{y}{x} (\log N)^3\big\}
\ee
and we apply    lemma \ref{lem:deflambda} again to obtain
\begin{align}\label{since}
P_{N,H_N^q}(\exists i\leq  x \log N\colon \,
V_i\leq-\tfrac{y}{x} (\log N)^3)&\leq \sum_{j=1}^{x \log N} P_{N,H_N^q}\big(e^{-\lambda V_j}\geq 
e^{\tfrac{y \lambda}{x} (\log N)^3}\big) \\
& \leq  \sum_{j=1}^{x\log N} E_{N,H_N^q}( e^{-\lambda V_j}) \, e^{-\tfrac{y \lambda}{x} (\log N)^3}\\
& \leq C' x \log N   e^{-\tfrac{y \lambda}{x} (\log N)^3},
\end{align}
and this completes the proof of \eqref{tpp4}.

%
%

\section{Scaling Limits in the critical phase}\label{sec:critic}
In this section we will prove the items (2) of Theorems \ref{pfa} and \ref{pfa2} which correspond to the critical case $(\beta=\beta_c)$. For the sake of notations, we will write $\beta$ instead of $\beta_c$ until the end of this proof.
In Section \ref{prep} below,
 we first exhibit a renewal
structure for the underlying geometric random walk, based on
``excursions''. Then we state a  local limit theorem for
the area of such an excursion. This Theorem has been proven recently in \cite{DKW13}. 
With these tools in hand we will be able to prove Theorem \ref{pfa} (2)
in Section \ref{pfy}  and Theorem \ref{pfa2} (2) in Section \ref{pfy2}.  Finally, in Section \ref{secder}, we identify the limiting law of the
rescaled  horizontal extension obtained in Section \ref{pfy2} with that of the Brownian stopping time $g_1$ under 
a proper conditioning. 

\subsection{Preparations}\label{prep}

\subsubsection{\bf The renewal structure}

We introduce a sequence of stopping times $(\tau_k)_{k\in\N}$ which
look a lot like ladder times by the prescription $\tau_0=0$ and
\begin{equation}\label{premm}
  \tau_{k+1} = \inf\ens{i> \tau_k : V_{i-1}\neq 0 \text{ and }
    V_{i-1}V_i \le 0}\,.
\end{equation}
To these we associate 
\begin{equation}\label{deuxx}
  \mathfrak{N}_k = \tau_k -\tau_{k-1} \quad(k\ge 1)\,,
\end{equation}
and the sequence of areas sweeped
\be{trois}
\mathfrak{A}_k =\valabs{V_{\tau_{k-1}}} + \cdots + \valabs{V_{\tau_k-1}} \quad(k\ge 1)\,.
\ee
We let $\tau=\tau_1=\mathfrak{N}_1$. Let us observe that $\mathfrak{A}_1=G_{\tau-1}(V)$. For simplicity, we will drop the $V$ dependency of $G$ in what follows.

\begin{proposition}
  \label{pro:renstruccritical}
The random variables $(\mathfrak{A}_i,\mathfrak{N}_i)_{i\ge 1}$ are
independent and the sequence $(\mathfrak{A}_i,\mathfrak{N}_i)_{i\ge
  2}$ is IID.
\end{proposition}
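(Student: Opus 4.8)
The plan is to combine the strong Markov property of $V$ at the stopping times $\tau_k$ with the lack‑of‑memory property of the geometric increments. First I would note that, since the increments $U_i$ are symmetric, centred and square integrable, the walk $V$ is recurrent and oscillates, so that $\tau_k<\infty$ $\mathbf{P}_\beta$‑almost surely for every $k$, and each $\tau_k$ is a genuine stopping time for $\mathcal{F}_n:=\sigma(V_0,\dots,V_n)$ because the event defining $\tau_{k+1}$ in \eqref{premm} involves only $V_0,\dots,V_n$. Introduce the excursion functional $\Psi$ sending a path $W=(W_i)_{i\ge0}$ to $\Psi(W):=(\mathfrak{n}(W),\mathfrak{a}(W))$, where $\mathfrak{n}(W)=\inf\{i\ge1:W_{i-1}\neq0,\ W_{i-1}W_i\le0\}$ and $\mathfrak{a}(W)=\sum_{i=0}^{\mathfrak{n}(W)-1}|W_i|$, so that $(\mathfrak{N}_k,\mathfrak{A}_k)=\Psi\big((V_{\tau_{k-1}+j})_{j\ge0}\big)$. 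By the strong Markov property, conditionally on $\mathcal{F}_{\tau_{k-1}}$ the shifted walk $(V_{\tau_{k-1}+j})_{j\ge0}$ is a fresh copy of $V$ started from $V_{\tau_{k-1}}$, hence the conditional law of $(\mathfrak{N}_k,\mathfrak{A}_k)$ given $\mathcal{F}_{\tau_{k-1}}$ is the image by $\Psi$ of that walk; call it $\bar{Q}_{V_{\tau_{k-1}}}$. Since $\Psi(-W)=\Psi(W)$ (both $|W_i|$ and the products $W_{i-1}W_i$ are unchanged under $W\mapsto-W$) and the increments of $V$ are symmetric, $\bar{Q}_v$ depends on $v$ only through $|v|$, and I write it $\bar{Q}_{|v|}$.

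The core of the proof is an overshoot lemma: for every $k\ge1$, $|V_{\tau_k}|$ has the geometric law $\mu_J(j):=(1-e^{-\beta/2})e^{-\beta j/2}$, $j\ge0$, and is independent of $\mathcal{H}_k:=\sigma\big((\mathfrak{A}_i,\mathfrak{N}_i):1\le i\le k\big)$. To prove this one partitions $\Omega$ (up to a null set, by recurrence) into the events $\{\tau_k=n+1,\,V_n=m\}$ over $n\ge0$ and $m\neq0$; each such event equals $A_{n,m}\cap\{U_{n+1}\text{ crosses }0\text{ starting from }m\}$ with $A_{n,m}\in\mathcal{F}_n$, on $A_{n,m}\cap\{\tau_k=n+1\}$ every $(\mathfrak{A}_i,\mathfrak{N}_i)$ with $i\le k$ coincides with an $\mathcal{F}_n$‑measurable random variable, and $\{|V_{\tau_k}|=j\}$ forces $U_{n+1}=-\sign(m)(|m|+j)$. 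Using $U_{n+1}\perp\mathcal{F}_n$ together with $\mathbf{P}_\beta(U_1=-\sign(m)(|m|+j))=\mathbf{P}_\beta(\sign(m)U_1\le-|m|)\,\mu_J(j)$, which is precisely the memorylessness of the geometric tails, summation over $n$ and $m$ yields $\mathbf{E}_\beta\big[Z\,\mathbf{1}_{\{|V_{\tau_k}|=j\}}\big]=\mu_J(j)\,\mathbf{E}_\beta[Z]$ for every bounded $\mathcal{H}_k$‑measurable $Z$, which is the claim.

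It remains to assemble the pieces by induction on $k$. For $k=1$ we have $V_{\tau_0}=V_0=0$, so $(\mathfrak{N}_1,\mathfrak{A}_1)\sim\bar{Q}_0$. For $k\ge2$, the conditional law of $(\mathfrak{N}_k,\mathfrak{A}_k)$ given $\mathcal{F}_{\tau_{k-1}}$ is $\bar{Q}_{|V_{\tau_{k-1}}|}$, which is $\sigma(|V_{\tau_{k-1}}|)$‑measurable; by the tower property the conditional law given the smaller $\sigma$‑algebra $\mathcal{H}_{k-1}\vee\sigma(|V_{\tau_{k-1}}|)$ is still $\bar{Q}_{|V_{\tau_{k-1}}|}$, and since the overshoot lemma gives $|V_{\tau_{k-1}}|\perp\mathcal{H}_{k-1}$ with law $\mu_J$, integrating out $|V_{\tau_{k-1}}|$ shows that $(\mathfrak{N}_k,\mathfrak{A}_k)$ is independent of $\mathcal{H}_{k-1}$ with the fixed law $\bar{Q}_*:=\sum_{j\ge0}\mu_J(j)\,\bar{Q}_j$. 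Iterating, $(\mathfrak{A}_i,\mathfrak{N}_i)_{i\ge1}$ is an independent family and $(\mathfrak{A}_i,\mathfrak{N}_i)_{i\ge2}$ are i.i.d.\ with common law $\bar{Q}_*$, which is the Proposition. The main obstacle is the overshoot lemma: because $\tau_k-1$ is not a stopping time one cannot invoke the strong Markov property at it directly, so the independence of the overshoot from the excursion just completed has to be extracted by the partition‑and‑memorylessness bookkeeping sketched above; everything else is routine once $\Psi$ and $\bar{Q}_{|v|}$ have been set up.
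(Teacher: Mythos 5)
Your proof is correct and rests on the same two pillars as the paper's argument — the strong Markov property at the stopping times $\tau_k$ and the memorylessness of the geometric tails of the increments, which is exactly what makes the overshoot $V_{\tau_k}$ independent of the excursion just completed, with a fixed law. The only difference is bookkeeping: the paper proves the overshoot independence for a single excursion under $\bP_{\beta,x}$ and $\bP_{\beta,\mu_\beta}$ (identifying $\mu_\beta$ as the ``invariant'' overshoot law) and then carries the law of $V_{\tau_k}$ through an induction, whereas you establish the overshoot lemma for every $k$ at once by the partition-over-$\{\tau_k=n+1,V_n=m\}$ computation; both routes deliver the same renewal structure.
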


We shall first need to study the distribution of $V_\tau$. Let $T$ be
a random variable with distribution geometric of parameter
$p_\beta=1-e^{-\beta/2}$ that is
$$ \prob{T=k} = e^{-\frac{\beta}{2} k} (1-e^{-\beta/2})
\qquad(k\in\N)\,,$$
and let $\mu_\beta$ be the law of the associated symmetric random variable, that
is $\mu_\beta$ is the distribution of $\epsilon T$ with $\epsilon$
independent from $T$ and $\prob{\epsilon = \pm 1} = \undemi$ :
\be{defubeta}
 \mu_\beta(k) = \prob{\epsilon T=k} = \frac{1-e^{-\beta/2}}{2}
e^{-\frac{\beta}{2} \valabs{k}} \un{k\neq 0} +  (1-e^{-\beta/2})
\un{k=0}.
\ee
Finally we let $\bP_{\beta,x}$ be the law of the random walk starting
  from $V_0=x\in \Z$ and $\bP_{\mu_\beta,\beta}$ be the law of the
  random walk when  $V_0$ has
distribution $\mu_\beta$.

\begin{lemma}
  Under  $\bP_{\beta,x}$  with $x\in \Z$ or under $\bP_{\beta,\mu_\beta}$,   the random variable $V_\tau$ is independent from the
  couple $(G_{\tau -1},\tau)$. Moreover 
\begin{itemize}
\item $V_\tau=_{\text{law}}\ -T$ \ under $\bP_{\beta,x}$ with $x>0$, 
\item $V_\tau=_{\text{law}}\ T$ \ under $\bP_{\beta,x}$ with $x<0$,
\item $V_\tau=_{\text{law}}\ \mu_{\beta}$ \ under $\bP_{\beta}$,
\item $V_\tau=_{\text{law}}\ \mu_\beta$ \ under $\bP_{\beta,\mu_\beta}$.
\end{itemize}
  

%
%
\end{lemma}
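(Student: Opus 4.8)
The key to this lemma is the observation that, at the stopping time $\tau = \inf\{i \geq 1 : V_{i-1} \neq 0, \ V_{i-1}V_i \leq 0\}$, the increment $U_\tau = V_\tau - V_{\tau-1}$ carries the walk across (or onto) the axis, so $V_\tau$ is entirely determined by $V_{\tau-1}$ together with the single increment $U_\tau$, and this increment is \emph{conditionally} independent of everything that happened strictly before time $\tau$. More precisely, on the event $\{\tau = n\}$ (which is measurable with respect to $(V_0,\dots,V_{n-1})$ and the \emph{sign}, but not the size, of $U_n$), the size of the crossing increment is still geometric and unconstrained. The plan is to condition on the whole history $\mathcal{F}_{\tau-1} = \sigma(V_0,\dots,V_{\tau-1})$ together with the sign $\sigma := \sign(U_\tau)$ (equivalently the sign of $-V_{\tau-1}$, since $V_{\tau-1}\neq 0$ and $V_{\tau-1}U_\tau \leq 0$ forces $\sigma = -\sign(V_{\tau-1})$ whenever $V_\tau \neq V_{\tau-1}$; the case $V_\tau = 0$ occurs when $|U_\tau| = |V_{\tau-1}|$) and then compute the law of $V_\tau$ given this data.

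First I would set up the conditioning. Suppose $V_{\tau-1} = x > 0$. Then we need $V_\tau \leq 0$, i.e.\ $U_\tau \leq -x$, and the walk has not crossed before, so $U_\tau$ given $\{\tau = \text{current time}, V_{\tau-1}=x\}$ is distributed as a geometric increment conditioned to be $\leq -x$: for $k \geq 0$, $\bP_\beta(U_\tau = -x-k \mid \cdots) = \frac{e^{-\beta(x+k)/2}/c_\beta}{\sum_{j\geq 0} e^{-\beta(x+j)/2}/c_\beta} = e^{-\beta k/2}(1-e^{-\beta/2})$. Hence $V_\tau = x + U_\tau = -k$ with probability $e^{-\beta k/2}(1-e^{-\beta/2}) = \bP(T = k)$, which is exactly the claim $V_\tau =_{\text{law}} -T$, and crucially this law does not depend on $x$. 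By the symmetric computation, $V_{\tau-1} = x < 0$ gives $V_\tau =_{\text{law}} T$. Since in both cases the conditional law of $V_\tau$ given $(\mathcal{F}_{\tau-1}, \sigma)$ depends on neither the value of $V_{\tau-1}$ nor on $\mathcal{F}_{\tau-1}$ at all (only on $\sign(V_{\tau-1})$), $V_\tau$ is independent of $\mathcal{F}_{\tau-1}$, in particular of the pair $(G_{\tau-1}, \tau)$ which is $\mathcal{F}_{\tau-1}$-measurable. This handles $\bP_{\beta,x}$ for $x > 0$ and $x < 0$.

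For $\bP_\beta$ (start at $0$) and $\bP_{\beta,\mu_\beta}$, I would condition on the sign of $V_{\tau-1}$. Under $\bP_\beta$, by the symmetry of the increment law the walk is equally likely, at time $\tau-1$, to be at a positive or a negative value (one checks that $\bP_\beta(V_{\tau-1} > 0) = \bP_\beta(V_{\tau-1} < 0) = 1/2$ using the sign-flip symmetry $V \mapsto -V$, noting $V_{\tau-1}\neq 0$ always). Mixing the two conditional laws $-T$ and $T$ with weights $1/2, 1/2$ gives precisely $\mu_\beta$ (using $\mu_\beta(0) = 1-e^{-\beta/2} = \bP(T=0)$, since the $k=0$ atom is shared). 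The same mixing argument works under $\bP_{\beta,\mu_\beta}$: either $V_0 = 0$, in which case we reduce to the $\bP_\beta$ case, or $V_0 \neq 0$ with $V_0$ symmetric, so again $\bP_{\beta,\mu_\beta}(V_{\tau-1} > 0) = 1/2$ by symmetry. The independence of $V_\tau$ from $(G_{\tau-1},\tau)$ persists because, conditionally on $\sign(V_{\tau-1})$, independence already holds, and $(G_{\tau-1},\tau)$ together with $\sign(V_{\tau-1})$ is still measurable with respect to the pre-$\tau$ history, while the fresh randomness in $|U_\tau|$ is independent of that history.

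\textbf{Main obstacle.} The one point requiring genuine care is the precise bookkeeping of the event $\{\tau = n\}$: it is \emph{not} a stopping event for the natural filtration of $V$ alone in the usual sense, because deciding $\tau = n$ requires knowing $V_n$, yet we want to argue that the \emph{magnitude} of $U_n$ remains a fresh geometric variable. The clean way is to note that $\{\tau \geq n\} \in \mathcal{F}_{n-1}$ and that on $\{\tau \geq n, V_{n-1} = x > 0\}$ the conditional law of $U_n$ is still the unconditioned geometric law; then $\{\tau = n\} = \{\tau \geq n\} \cap \{U_n \leq -x\}$, and conditioning the geometric on $\{U_n \leq -x\}$ produces the memoryless shift yielding $-T$. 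I would present this via the identity $\bP_\beta(V_\tau = v, (G_{\tau-1},\tau) \in B) = \sum_{n,x} \bP_\beta(\tau \geq n, V_{n-1}=x, (G_{n-1},n)\in B)\,\bP_\beta(U = v - x \mid U \leq -x)$ for $x>0$ (and symmetrically), then recognize the last factor as $\bP(T = -v)$ independent of $(n,x)$, which factorizes the sum and gives both the independence and the identification of the law simultaneously.
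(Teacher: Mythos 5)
Your proposal is correct and follows essentially the same route as the paper: where the paper disintegrates $\{G_{\tau-1}=a,\tau=n,V_\tau=-y\}$ over the last pre-crossing value $V_{n-1}=z$ and factors out $e^{-\beta y/2}$, you condition on the pre-$\tau$ history and invoke the memorylessness of the geometric overshoot, and both arguments use the sign-flip symmetry to handle the $x=0$ and $\mu_\beta$ starting conditions. One small slip in your final displayed identity: to make the sum telescope correctly you should either write $\bP_\beta(\tau = n, V_{n-1}=x, (G_{n-1},n)\in B)\,\bP_\beta(U = v - x \mid U \leq -x)$ or $\bP_\beta(\tau \geq n, V_{n-1}=x, (G_{n-1},n)\in B)\,\bP_\beta(U = v - x)$, but not the mix you wrote (with $\tau\ge n$ together with the conditional probability), which is not an identity; and in the symmetric cases, you should state explicitly that the reflection $V\mapsto -V$ gives full independence of $\sign(V_{\tau-1})$ from $(G_{\tau-1},\tau)$, not merely $\bP(\sign(V_{\tau-1})=+)=\tfrac12$, as the former is what the mixing step requires.
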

\begin{proof}
  Let $x>0, y\ge 0$ and $a,n$ be integers. Under $\bP_{\beta,x}$  we compute the probability of $T_{n,a,y}:=\{G_{\tau -1}=a,\tau=n,V_\tau=-y\}$ by disintegrating it with respect to the 
 value $z>0$  taken by  $V_{n-1}$, i.e., 
\begin{align}\label{alift}
\nonumber \bP_{\beta,x}(G_{\tau -1}&=a,\tau=n,V_\tau=-y)\\
\nonumber &= \sum_{z>0}\probbetax{V_1+\cdots+V_{n-1}=a; V_i>0,1\le
  i\le n-2;V_{n-1}=z} \frac{e^{-\frac{\beta}{2}(z+y)}}{c_\beta}\\
&= \gamma_\beta \probbetax{G_{\tau -1}=a,\tau=n} e^{-\frac{\beta}{2}y}
\end{align}
where $\gamma_\beta$ can be seen as a normalizing constant for a
distribution on the non negative integers, we obtain that
$\gamma_\beta=p_\beta$ and $V_\tau$ is independent of
$(G_{\tau -1},\tau)$ and distributed as $-T$.
For $x<0$ the proof is exactly the same. 
For $x=0$,  we take into
account the possibility that
the walk sticks to zero for  a while. Thus, for $y>0$, we partition the event $\{G_{\tau -1}=a,\tau=n,V_\tau=-y\}$ depending on the 
 value $z>0$  taken by  $V_{n-1}$ and on the number of steps $k$ during which the random walks sticks to $0$, i.e.,
 
 \begin{multline}
  \probbeta{G_{\tau -1}=a,\tau=n,V_\tau=-y}\\
=\sum_{z>0,0\le k\le n-2}\probbetax{A_{n-1}=a; V_i=0, 0\le
  i\le k; V_i>0,k<
  i\le n-2;V_{n-1}=z} \frac{e^{-\frac{\beta}{2}(z+y)}}{c_\beta}\\
= \kappa e^{-\frac{\beta}{2}y}, \qquad \hfill
\end{multline}
and we obtain by symmetry $\probbeta{G_{\tau -1}=a,\tau=n,V_\tau=y}=\kappa  e^{-\frac{\beta}{2}y}$.
It is only for $x=y=0$ that we need to take into account positive and
negative excursions, and we obtain
$$ \probbeta{G_{\tau -1}=a,\tau=n,V_\tau=0}= 2 \kappa\,.$$
Summing all these probabilities yields
$$\kappa=\frac{1-e^{-\beta/2}}{2}\probbeta{G_{\tau -1}=a,\tau=n} $$
 and we
can conclude that the random variable $V_\tau$ is independent from the
  couple $(G_{\tau -1},\tau)$ with distribution $\mu_\beta$.

The final computation showing that $\mu_\beta$ is an ``invariant
measure'', is straightforward:
\begin{align}
  \probmubeta{V_\tau =y} &= \sum_{x} \mu_\beta(x)
\probbetax{V_\tau=y}\\
&= \sum_{x} \mu_\beta(x) (\prob{-T=y} \un{x>0} + \prob{T=y} \un{x<0} +
\mu_\beta(y) \un{x=0})\\
&= \ldots = \mu_\beta(y)
\end{align}
\end{proof}

\begin{proof}[Proof of Proposition~\ref{pro:renstruccritical}] The
  proof is based on the preceding Lemma, and uses induction in
  conjunction with the Markov property. Our induction assumption is
  thus that the sequence $(\mathfrak{A}_i,\mathfrak{N}_i)_{1\le i\le
    k}$ is independent, that the subsequence $(\mathfrak{A}_i,\mathfrak{N}_i)_{2\le i\le
    k}$ is IID, and that the random variable $V_{\tau_k}$ is
  independent of $(\mathfrak{A}_i,\mathfrak{N}_i)_{1\le i\le
    k}$, with distribution $\mu_\beta$. Then,
  \begin{multline*}
    \probbeta{(\mathfrak{A}_i,\mathfrak{N}_i)=(a_i,n_i), 1\le i \le
      k+1 ; V_{\tau_{k+1}}=y} = \\
\espbeta{\ind_{\{(\mathfrak{A}_i,\mathfrak{N}_i)=(a_i,n_i), 1\le i \le
      k\}}\bP_{V_{\tau_k}}\etp{(\mathfrak{A}_1,\mathfrak{N}_1)=(a_{k+1},n_{k+1})
    ;  V_{\tau}=y}}
\\
= \mu_\beta(y) \probmubeta{A_{\tau -1}=a_{k+1},\tau=n_{k+1}}\,  \probbeta{(\mathfrak{A}_i,\mathfrak{N}_i)=(a_i,n_i), 1\le i \le
      k}\,,
  \end{multline*}
and this concludes the induction step.
  
\end{proof}

\subsubsection{\bf Local limit theorem for the ``excursion'' area}

Let us state first the theorem. Here $f_{ex}$ stands for the density
of the standard Brownian excursion (see e.g. Janson~\cite{J07}).
\begin{theorem}[Theorem 1 of \cite{DKW13}]
  \label{theo:wachtellclt}
With the constant $C_\beta:=(\bE_\beta(V_1^2))^{-1/2}$ and with $w(x)=C_\beta\, f_{ex}(C_\beta\, x)$, we have
$$ \lim_{n\to +\infty} \sup_{a\in\Z} \valabs{n^{3/2} \probbeta{G_n=a \mid \tau=n} -
  w(a/n^{3/2})} = 0\,.$$
\end{theorem}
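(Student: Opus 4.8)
This is Theorem~1 of \cite{DKW13}, so in the present paper it is used as a black box; the following is a sketch of how one proves it. The idea is to first establish the functional (weak) version and then upgrade it to a \emph{uniform local} statement by splitting the excursion into its bulk, where a genuine local central limit theorem for the accumulated area is available, and its two endpoints, whose contribution to the area is asymptotically negligible.

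\emph{Step 1 (weak convergence).} By symmetry of the increment law it suffices to treat positive excursions. Conditionally on $\tau=n$, the walk $(V_i)_{0\le i\le n}$ is, after deleting an $O(1)$ number of initial steps spent at the origin and discarding the $O(1)$ entrance and exit heights, a geometric random walk constrained to stay strictly positive on an interval of length $n-O(1)$; this is the classical random--walk excursion and, by the invariance principle for such walks (Kaigh, Iglehart; or Caravenna--Chaumont for walks conditioned to stay positive), $(n^{-1/2}V_{\lfloor nt\rfloor})_{t\in[0,1]}$ converges in law, under $\bP_\beta(\cdot\mid\tau=n)$, to $C_\beta^{-1}(e_t)_{t\in[0,1]}$ with $e$ the normalized Brownian excursion. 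Since on $\{\tau=n\}$ one has $G_n=\sum_{i=1}^{n-1}V_i + O(1)$ and $x\mapsto\int_0^1 x(t)\,dt$ is continuous, this yields $n^{-3/2}G_n\Rightarrow C_\beta^{-1}\int_0^1 e_t\,dt$, whose density at $x$ equals $C_\beta f_{ex}(C_\beta x)=w(x)$. The exponential tails of the increments give all the moments needed for uniform integrability.

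\emph{Step 2 (bulk local CLT).} Fix a small $\varepsilon>0$ and decompose $G_n=G'+G''+G'''$ with $G'=G_{\lfloor\varepsilon n\rfloor}$, $G''=G_{n-\lfloor\varepsilon n\rfloor}-G_{\lfloor\varepsilon n\rfloor}$ and $G'''=G_n-G_{n-\lfloor\varepsilon n\rfloor}$. By Step~1 the marks $V_{\lfloor\varepsilon n\rfloor}$ and $V_{n-\lfloor\varepsilon n\rfloor}$ are, under $\bP_\beta(\cdot\mid\tau=n)$, of order $\sqrt n$ and bounded away from $0$, so with probability $1-o(1)$ the walk stays away from the origin on $[\lfloor\varepsilon n\rfloor,n-\lfloor\varepsilon n\rfloor]$ and $G''$ is just the accumulated algebraic area of an unconstrained walk segment. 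For the pair (endpoint, accumulated area) of such a segment one has a two--dimensional local limit theorem, obtained by Fourier inversion exactly as for the integrated random walk, the endpoint being of order $\sqrt n$ and the area of order $n^{3/2}$ with a non--degenerate joint Gaussian limit; this gives a local limit theorem for $G''$ conditionally on the mark values, uniformly over mark values in the relevant $\sqrt n$--window.

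\emph{Step 3 (endpoints and assembly).} The boundary pieces $G'$ and $G'''$ are each the area of a positive random--walk meander of length $\sim\varepsilon n$ with $O(1)$ entrance and exit heights, hence of size $O_{\mathbf P}(\varepsilon^{3/2}n^{3/2})$; their entrance/exit laws are controlled by the renewal decomposition of the excursion, for which the distribution of $V_\tau$ established just before this theorem and Proposition~\ref{pro:renstruccritical} provide the required input. Convolving the local CLT of Step~2 with the laws of $G'$ and $G'''$ and letting first $n\to\infty$ and then $\varepsilon\to0$, one recovers the density $w$ of $C_\beta^{-1}\int_0^1 e_t\,dt$, uniformly in $a\in\Z$. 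I expect Step~3, together with the uniformity in $a$, to be the main obstacle: because $G_n$ is not a sum of independent terms under the conditioning $\{\tau=n\}$ there is no global characteristic--function argument, and one must separate the Gaussian bulk fluctuation of the area from the non--Gaussian but asymptotically negligible endpoint contributions while keeping every error bound uniform in $a$; this delicate analysis is precisely the content of \cite{DKW13}.
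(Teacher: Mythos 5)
This theorem is stated in the paper as a direct citation of Theorem~1 of \cite{DKW13}; the paper gives no proof of it, and treating it as a black box, as you do, is exactly what the paper intends, so there is nothing in-paper to compare your sketch against. Your outline of how \cite{DKW13} proceeds has the right architecture (functional invariance principle for the conditioned walk giving $n^{-3/2}G_n\Rightarrow C_\beta^{-1}\int_0^1 e_t\,dt$, a bivariate local CLT for endpoint and accumulated area on the bulk via Fourier inversion, then assembly of the negligible endpoint pieces), and you correctly flag Step~3 together with the uniformity in $a$ as the real work. One imprecision worth noting in Step~2: the middle block $[\lfloor\varepsilon n\rfloor, n-\lfloor\varepsilon n\rfloor]$ is \emph{not} an unconstrained walk segment once you condition on $\tau=n$ — it remains a walk conditioned to stay positive and is pinned to endpoint values of order $\sqrt n$ at both ends. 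What makes the bulk local CLT go through is that, given endpoints of order $\sqrt n$, both the positivity constraint and the pinning are asymptotically harmless for the joint (endpoint, area) distribution — not that the constraints are absent. Keeping this precise, and propagating it uniformly in $a$, is part of why the argument in \cite{DKW13} is delicate, as you anticipate.
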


\begin{remark}\label{rem:cvdom}From the monograph~\cite{J07} we extract the asymptotics
  (formulas (93) and (96))
  \begin{gather}
    \label{eq:asympfex}
    f_{ex}(x) = e^{-\frac{C_1}{x^2}} ( C_2 x^{-5} + o(x^{-5})
    \quad(x\to 0)\\
f_{ex}(x) = C_3 x^2 e^{-6 x^2} (1 + o(1)) \quad(x \to +\infty).
  \end{gather}
A straightforward application of the dominated convergence theorem
entails that if $b_n\to \infty$ then for all $\eta>0$, 
\begin{equation} \label{eq:cvdominpuisnonentiere}
\unsur{n^{2/3}} \sum_{\eta n^{2/3} \le k\le b_n n^{2/3}}
\etp{\frac{k}{n^{2/3}}}^{-3} w\etp{\etp{\frac{k}{n^{2/3}}}^{-3/2}} \to
\int_\eta^{+\infty} t^{-3} w(t^{-3/2})\, dt.
\end{equation}
\end{remark}

It is easy to check from \cite{DKW13} that  this theorem still holds when started from the
``invariant measure'' $\mu_\beta$. More precisely, $R_n\to 0$ with

\be{errorterm} 
R_n :=\sup_{a\in\Z} \valabs{n^{3/2} \probmubeta{G_n=a \mid \tau=n} -
  w(a/n^{3/2})}
\ee
Without loss in generality we shall assume that $R_n$ non-increasing.


\subsection{Proof of Theorem \eqref{pfa} (2)  }\label{pfy}

Using the random walk representation \eqref{tgh}, we obtain, since
$\Gamma_\beta=1$, that the excess partition function is
\begin{align}\label{parffu}
\tfrac{1}{c_\beta} \tilde{Z}_{L,\beta} &= \sum_{N=1}^L
\probbeta{G_N=L-N,V_{N+1}=0}.
\end{align}
Then, we partition the event  $\{G_N=L-N,V_{N+1}=0\}$ depending on the length $r$
on which the random walk sticks at the origin before its right extremity, that is
\begin{align}
\tfrac{1}{c_\beta} \tilde{Z}_{L,\beta}   &=\sum_{r=0}^{L-1} \sum_{N=1}^{L-r}
\probbeta{G_{N}=L-N-r, V_N\neq 0, V_{N+1}=0}  \probbeta{V_1=\dots=V_r=0}\\
\nonumber &\hspace{5cm} +\probbeta{V_1=\dots=V_L=0}\\
\nonumber &=\big(\tfrac{1}{c_\beta}\big)^L\,+ \sum_{r=0}^{L-1}\big(\tfrac{1}{c_\beta}\big)^r \sum_{N=1}^{L-r}
\probbeta{G_{N}=L-N-r, V_N\neq 0, V_{N+1}=0} .
\end{align}
Then we use the fact that, for all $x\in \N$, we have 
$\probbeta{U_1=x}/\probbeta{U_1\geq x}=1-e^{\beta/2}$, and therefore

\begin{align}\label{TTF}
\nonumber \tfrac{1}{c_\beta} \tilde{Z}_{L,\beta}   &=\big(\tfrac{1}{c_\beta}\big)^L\,+ (1-e^{-\beta/2}) \sum_{r=0}^{L-1}\big(\tfrac{1}{c_\beta}\big)^r \sum_{N=1}^{L-r}
\probbeta{G_{N}=L-N-r, V_N\neq 0, V_N V_{N+1}\leq 0} \\
 &=\big(\tfrac{1}{c_\beta}\big)^L\,+ (1-e^{-\beta/2}) \sum_{r=0}^{L-1}\big(\tfrac{1}{c_\beta}\big)^r \, 
\probbeta{L-r+1 \in \mathfrak{X}},
\end{align}
where 
\begin{equation}\label{XX}
\mathfrak{X} =\Big\{\sum_{k\le n} \mathfrak{A}_k + \mathfrak{N}_k ; n
  \ge 1\Big\}
\end{equation}
is the renewal set associated to the  sequence of random variables
$X_k :=  \mathfrak{A}_k + \mathfrak{N}_k$ (recall (\ref{premm}--\ref{trois}).

It is clear that we are
going to obtain the same asymptotics for $\tilde{Z}_{L,\beta}$ if we substitute $\bP_{\beta,\mu_\beta}$ to 
 $\bP_{\beta}$ in the r.h.s. of \eqref{TTF}, that is  
if we consider a true
renewal process with the random variable $X_1$ having the same
distribution as the $X_i$ for $i\ge 2$.
Thus, the proof of Theorem \eqref{pfa} (2)   will be a consequence of the tail estimate of $X$ under $\bP_{\beta,\mu_\beta}$ in the next lemma.
\begin{lemma}\label{tes} For $\beta>0$, there exists a $c_{1,\beta}>0$ such that 
\begin{equation} 
  \label{eq:asympatauplustau}
  \bP_{\beta,\mu_\beta}(X_1= n)=\frac{c_{1,\beta}}{n^{4/3}} (1+o(1)),
\end{equation}
and $c_{1,\beta}=(1+e^{\beta/2})\,\sqrt{\frac{\espbeta{V_1^2}}{2 \pi}} \int_0^{+\infty}
  x^{-3} w(x^{-\frac32})\, dx$.
\end{lemma}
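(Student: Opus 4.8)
The strategy is to use the renewal structure from Proposition~\ref{pro:renstruccritical}: under $\bP_{\beta,\mu_\beta}$ the variable $X_1 = \mathfrak{A}_1 + \mathfrak{N}_1$ has the same distribution as all $X_i$, so it suffices to compute the tail of $\mathfrak{A}_1 + \mathfrak{N}_1 = G_{\tau-1} + \tau$. The heuristic is clear: a large excursion of length $\tau = n^{2/3}$ sweeps a geometric area $G_{\tau-1}$ of order $(n^{2/3})^{3/2} = n$, so to get $\mathfrak{A}_1 + \mathfrak{N}_1 = n$ we should have $\tau$ of order $n^{2/3}$ and $G_{\tau-1}$ of order $n$, with $\tau \ll G_{\tau-1}$. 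Concretely, I would write
$$
\bP_{\beta,\mu_\beta}(X_1 = n) = \sum_{k=1}^{n} \bP_{\beta,\mu_\beta}(\tau = k)\, \bP_{\beta,\mu_\beta}(G_{\tau-1} = n-k \mid \tau = k),
$$
and plug in the local limit theorem in the form \eqref{errorterm}: $\bP_{\beta,\mu_\beta}(G_k = a \mid \tau = k) = k^{-3/2}(w(a/k^{3/2}) + \varepsilon_{k,a})$ with $\sup_a |\varepsilon_{k,a}| = R_k \to 0$.

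\textbf{Main term.} Substituting and using that $\mathfrak{A}_1 = G_{\tau-1}$, I would split the sum over $k$ at the scale $n^{2/3}$. For $k$ of order $n^{2/3}$, say $k = \lfloor t\, n^{2/3}\rfloor$, we have $n - k = n(1 + o(1))$, so $(n-k)/k^{3/2} = (n - k) n^{-1}\,(k/n^{2/3})^{-3/2} \to t^{-3/2}$, and each term contributes $k^{-3/2} w((n-k)/k^{3/2}) \sim n^{-1} t^{-3/2\cdot(\cdots)}$; more precisely $k^{-3/2} = n^{-1} (k/n^{2/3})^{-3/2} = n^{-1} t^{-3/2}$. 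Summing over $k$ and converting to a Riemann integral with the substitution $k = t\, n^{2/3}$ (so $dk = n^{2/3}\, dt$) gives
$$
\bP_{\beta,\mu_\beta}(X_1 = n) \sim n^{-1}\, n^{2/3}\, n^{-2/3} \int_0^{\infty} t^{-3} w(t^{-3/2})\, dt \cdot c = \frac{c}{n^{4/3}}\int_0^{\infty} t^{-3} w(t^{-3/2})\, dt,
$$
where the normalization constant $c$ comes from the prefactor in the local limit theorem. The precise prefactor $c_{1,\beta} = (1+e^{\beta/2})\sqrt{\bE_\beta(V_1^2)/(2\pi)}$ should emerge from the exact form of $w(x) = C_\beta f_{ex}(C_\beta x)$ together with a careful bookkeeping of constants; the factor $(1+e^{\beta/2})$ presumably comes from comparing $\bP_{\beta,\mu_\beta}$ with $\bP_\beta$ and the structure of the return-to-zero mechanism. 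The convergence of the Riemann sum to the integral, uniformly enough to extract the leading order, is exactly the content of Remark~\ref{rem:cvdom}, equation~\eqref{eq:cvdominpuisnonentiere}, applied with $b_n \to \infty$ chosen appropriately (and the integrability of $t^{-3} w(t^{-3/2})$ near $0$ and $\infty$ follows from the asymptotics of $f_{ex}$ quoted in Remark~\ref{rem:cvdom}).

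\textbf{Error control.} The two ranges to dispatch are: (i) $k$ small, $k \le \eta n^{2/3}$, and (ii) $k$ large, $k \ge b_n n^{2/3}$ (including $k$ comparable to $n$, where $n - k$ is small or even forces $G_{\tau-1}$ atypically small for the given $\tau$). For (i), one uses that $w(t^{-3/2})$ decays like $t^{3/2}\cdot t^{-3}\cdot$(Gaussian-type factor) as $t \to 0$ by the first display in Remark~\ref{rem:cvdom}, so the tail contribution is $o(1)$ after multiplying by $n^{4/3}$; for (ii) one needs a tail bound on $\tau$ under $\bP_{\beta,\mu_\beta}$ — since $\tau$ is in the domain of attraction of a $\tfrac12$-stable law, $\bP_{\beta,\mu_\beta}(\tau = k)$ is of order $k^{-3/2}$, and summing $k^{-3/2}\cdot k^{-3/2} w((n-k)/k^{3/2})$ over large $k$, together with the decay of $w$ near $0$ (when $n-k$ becomes small compared to $k^{3/2}$), gives a negligible contribution. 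The uniform error term $R_k$ must also be integrated out: $\sum_k k^{-3/2} R_k$ over $k \asymp n^{2/3}$ is bounded by $R_{\eta n^{2/3}}\cdot O(1)$, which vanishes since $R_k \to 0$.

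\textbf{Main obstacle.} The conceptual steps are routine renewal-theory manipulations; the real work is (a) pinning down the exact constant $c_{1,\beta}$, which requires tracking every normalization in the definition of $w$, the invariant measure $\mu_\beta$, and the local limit theorem of \cite{DKW13} started from $\mu_\beta$ rather than from $0$ (this is where the $(1+e^{\beta/2})$ and the $\sqrt{1/(2\pi)}$ ought to appear), and (b) making the Riemann-sum-to-integral passage genuinely uniform, i.e.\ controlling the error terms (i) and (ii) above with explicit decay. I would structure the proof as: first establish the decomposition and the local limit substitution; then prove the main-term convergence via \eqref{eq:cvdominpuisnonentiere}; then bound the three error contributions (small $k$, large $k$, and the $R_k$ remainder); and finally assemble the constant. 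The uniform control over $k \asymp n^{2/3}$ — in particular checking that the local limit theorem from $\mu_\beta$ has error $R_k$ summable against $k^{-3/2}$ on that window — is the step I expect to require the most care.
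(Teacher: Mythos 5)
Your decomposition $\bP_{\beta,\mu_\beta}(X_1=n)=\sum_k\bP_{\beta,\mu_\beta}(\tau=k)\bP_{\beta,\mu_\beta}(G_{\tau-1}=n-k\mid\tau=k)$, the Riemann-sum computation of the main term over $k\asymp n^{2/3}$ via \eqref{eq:cvdominpuisnonentiere}, and the large-$k$ and intermediate-$R_k$ error controls are all in line with the paper's Step~2, and the constant $c_{1,\beta}$ does indeed come from the Kesten-type tail $\probmubeta{\tau=k}\sim C_\tau k^{-3/2}$ (with the $(1+e^{\beta/2})$ coming from the passage from $\bP_\beta$ to $\bP_{\beta,\mu_\beta}$, cf.\ \eqref{eq:equivptauegaln}) together with the local limit theorem of \cite{DKW13}.

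There is, however, a genuine gap in your error control (i) for the small-$k$ range $k\le\eta n^{2/3}$. You propose to bound that contribution using the fast decay of $w(t^{-3/2})$ as $t\to0$, but this only controls the $w$-part of the LLT substitution $\bP(G_{k-1}=n-k\mid\tau=k)=k^{-3/2}\bigl(w((n-k)/k^{3/2})+\epsilon_{k,n-k}\bigr)$ with $|\epsilon_{k,n-k}|\le R_k$. The error piece contributes $\sum_{k\le\eta n^{2/3}}\bP_{\beta,\mu_\beta}(\tau=k)\,k^{-3/2}R_k$, and since Theorem~\ref{theo:wachtellclt} gives $R_k\to0$ with no quantitative rate, this sum is only bounded by the $n$-independent constant $\sum_{k\ge1}\bP(\tau=k)k^{-3/2}R_k$ — far from the required $o(n^{-4/3})$. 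The obstruction is structural: for $k\ll n^{2/3}$ the target $a=n-k$ sits in the far tail where the true probability is tiny, but a local limit theorem whose error is \emph{uniform in the target} cannot see this, because the additive error $R_k$ swamps the Gaussian decay of $w$. To close this one needs a local limit estimate with a quantitative (polynomial) error and genuine Gaussian tail decay in the limiting density — and then some surgery to apply it. That is exactly what the paper does in its Step~1: it proves the sharpened LLT for the pair $(V_n,A_n)$ with an $O(n^{-3})$-uniform error (Proposition~\ref{lltalgarea}), controls the probability of pathological trajectories (very large increments, or short sub-blocks sweeping area $\gtrsim n$), and decomposes $\{\tau=k,\,G_{k-1}=n-k,\,V_k=0\}$ via the first and last hitting times of the level $\sqrt{k}$ so that the middle block is long enough for the sharp LLT (and hence the Gaussian decay in the area) to bite, yielding $\limsup_n n^{4/3}u_n\le\epsilon$. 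Without an ingredient of this type your argument for (i) does not go through.
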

By applying  Doney's \cite{Doney97}, Theorem B (see also Theorem A.7 of \cite{cf:Gia}) we deduce from \eqref{eq:asympatauplustau} that
\be{attain}
\bP_{\beta,\mu_\beta}(L \in \mathfrak{X}) =  \frac{\sin(\pi/3)}{3 \pi c_{1,\beta} L^{2/3}} (1+o(1)).
\ee
Then, it suffices to recall  (\ref{TTF}-\ref{XX}) to complete the proof.

\medskip
  \begin{proof}[Proof of \ref{eq:asympatauplustau}]

We recall that $\tau_1=\mathfrak{N}_1$ and we drop the index $1$ for simplicity. First, we use that for all $j,k\in \N$
$$\frac{\probmubeta{V_{j+1}-V_j\geq k}}{\probmubeta{V_{j+1}-V_j= k}}\geq \frac{1}{1-e^{-\frac{\beta}{2}}}$$
to write 
\be{apr}
 \probmubeta{X_1 = n} =\probmubeta{A_{\tau -1} + \tau = n}=\probmubeta{A_{\tau -1} + \tau = n, V_\tau=0} \tfrac{1}{1-e^{-\frac{\beta}{2}}}
\ee
 and then  
split the probability of the r.h.s. in \eqref{apr} into two terms:
\begin{align}\label{auta}
\nonumber &\probmubeta{A_{\tau -1} + \tau = n, V_\tau=0} \\
\nonumber & \hspace{.2 cm}= \sum_{k=1}^{\eta n^{2/3}} \probmubeta{A_{k-1} = n-k, \tau=k, V_k=0} 
 + \sum_{ k=\eta\,  n^{2/3}}^{ n} \probmubeta{A_{k-1} = n-k, \tau=k,V_k=0} \\
&\hspace{.2 cm} =: u_n + v_n.
  \end{align}
From this equality, the proof will be divided into two steps. The first step consists in controlling 
$u_n$ and the second step $v_n$.

\subsubsection{Step 1}

Our aim is to show that for all $\gep>0$ there exists an $\eta>0$ such that 
\be{limsu}
 \limsup_{n\to \infty} n^{4/3} u_n\leq \gep
 \ee
 
\begin{proof} 
In this step, we will need an improved version of the local limit Theorem established in \cite[Proposition 2.3]{CD08} for $V_n$ and $A_n$
simultaneously. 
\begin{proposition}\label{lltalgarea}
\be{ala}
\sup_{n\in \N} \, \sup_{k,a\in \Z} n^3\,  \Big| \bP_\beta(V_n=k, A_n(V)=a)-g\Big(\tfrac{k}{\sigma_\beta \sqrt{n}}, \tfrac{a}{\sigma_\beta n^{3/2}}\Big)\Big|< \infty,
\ee
with $g(y,z)=\frac{6}{\pi} e^{-2 y^2-6 z^2 +6 yz}$ for $(y,z)\in \R^2$.
\end{proposition}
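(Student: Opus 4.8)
The plan is to establish this bivariate local limit theorem for the pair $(V_n, A_n(V))$ under $\bP_\beta$ by Fourier inversion, following the classical Gnedenko-type scheme. First I would record the joint characteristic function: writing $U_1, U_2, \ldots$ for the i.i.d. geometric increments of law $\bP_\beta$ (symmetric, with all exponential moments finite on $(-\beta/2,\beta/2)$), we have $V_n = \sum_{i=1}^n U_i$ and $A_n(V) = \sum_{i=1}^n V_i = \sum_{i=1}^n (n+1-i) U_i$, so that for $(s,t) \in \R^2$,
\[
\bE_\beta\bigl[e^{i s V_n + i t A_n(V)}\bigr] = \prod_{j=1}^n \widehat{\mu}\bigl(s + (n+1-j) t\bigr) = \prod_{k=1}^n \widehat{\mu}(s + k t),
\]
where $\widehat{\mu}(\theta) = \bE_\beta[e^{i\theta U_1}]$. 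By the inversion formula,
\[
\bP_\beta(V_n = k, A_n(V) = a) = \frac{1}{(2\pi)^2}\int_{[-\pi,\pi]^2} e^{-i(s k + t a)} \prod_{j=1}^n \widehat{\mu}(s + j t)\, ds\, dt,
\]
and after the change of variables $s = u/(\sigma_\beta \sqrt n)$, $t = v/(\sigma_\beta n^{3/2})$ with $\sigma_\beta^2 = \bE_\beta(U_1^2)$, the target density $g$ should emerge as the Gaussian limit. The natural splitting of the integration domain is into a central region $\{|u|, |v| \le \delta n^{1/2} \text{ (resp. } n^{3/2})\}$ — more precisely where $|s| + n|t|$ stays small — and its complement.

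In the central region I would use the Taylor expansion $\log \widehat{\mu}(\theta) = -\tfrac12 \sigma_\beta^2 \theta^2 + O(\theta^4)$ (valid since $\widehat\mu$ is smooth and $\widehat\mu(0)=1$, $\widehat\mu'(0)=0$) to get
\[
\sum_{j=1}^n \log \widehat{\mu}\Bigl(\tfrac{u}{\sigma_\beta\sqrt n} + \tfrac{j v}{\sigma_\beta n^{3/2}}\Bigr) = -\tfrac12 \sum_{j=1}^n \Bigl(\tfrac{u}{\sqrt n} + \tfrac{j v}{n^{3/2}}\Bigr)^2 + o(1),
\]
and the Riemann-sum identities $\tfrac1n\sum (j/n)^0 \to 1$, $\tfrac1n \sum (j/n) \to \tfrac12$, $\tfrac1n \sum (j/n)^2 \to \tfrac13$ give the quadratic form $2u^2 + 6v^2 - 6uv$ (after a short computation: $u^2 + uv + v^2/3$ scaled appropriately — the cross term and the coefficient $6$ in $g$ come out of this), whose inverse Fourier transform is exactly $g(y,z) = \tfrac{6}{\pi} e^{-2y^2 - 6z^2 + 6yz}$ (note the quadratic form $2y^2 + 6z^2 - 6yz$ has determinant $3$, matching the prefactor $6/\pi = \sqrt{3}/(2\pi) \cdot 2\sqrt{3}$). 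The uniform-in-$(k,a)$ error bound with the $n^3$ normalization follows because the correction terms in the expansion are $O(n^{-1})$ times bounded integrals.

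The main obstacle — as always in local limit theorems — is controlling the tail contribution away from the origin: one must show that $\bigl|\prod_{j=1}^n \widehat{\mu}(s+jt)\bigr|$ is exponentially (or at least super-polynomially) small whenever $(s,t)$ is outside the central scaled window but inside $[-\pi,\pi]^2$. Here the difficulty is that individual factors $\widehat\mu(s+jt)$ can be close to $1$ for certain $j$ even when $t$ is not small — however, since the geometric law $\mu_\beta$ is aperiodic and strongly non-lattice in a suitable sense, one has $|\widehat\mu(\theta)| \le 1 - c\,\|\theta\|^2_{\R/2\pi\Z}$ near $0$ and $|\widehat\mu(\theta)| \le \rho < 1$ bounded away from $0$ on $[-\pi,\pi]$; summing $\sum_j \|s + jt\|^2$ over a full period of the arithmetic progression $\{s+jt\}$ shows that a positive proportion of the factors are uniformly bounded away from $1$, giving the required decay. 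I would handle the regime $|t| \gtrsim 1/n$ by this equidistribution argument and the regime $|t| \lesssim 1/n$, $|s| \gtrsim 1/\sqrt n$ by the fact that then most factors are $\approx \widehat\mu(s)$ which is $\le 1 - c/n$. This is essentially the argument of \cite[Proposition 2.3]{CD08}, which treats exactly this pair $(V_n, A_n)$ for an integrated random walk; the present statement merely upgrades it to a sup-norm (uniform in $k,a$) bound with an explicit rate, and I would cite that reference for the bulk of the tail estimates, filling in only the uniformity.
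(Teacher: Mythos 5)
Your route --- Fourier inversion of the joint characteristic function of $(V_n,A_n)$, a fourth-order Taylor expansion of $\log\widehat\mu$ in the central region (the cubic term vanishing by symmetry of $\mu_\beta$), and an equidistribution argument for the tail of the arithmetic progression $\{s+jt\}$ --- is exactly the approach the paper has in mind; the paper simply outsources the computation to \cite[Theorem 2.3.10]{LL10} and \cite[Prop.~2.3]{CD08}, observing as you do that the vanishing third moment is what upgrades the error to $O(n^{-3})$. One arithmetic caveat: with $\Sigma=\left(\begin{smallmatrix}1 & 1/2\\ 1/2 & 1/3\end{smallmatrix}\right)$ one has $\det\Sigma=1/12$, so the limiting density is $\frac{1}{2\pi\sqrt{\det\Sigma}}\,e^{-\frac12 w^\top\Sigma^{-1}w}=\frac{\sqrt 3}{\pi}\,e^{-2y^2-6z^2+6yz}$, not $\frac{6}{\pi}e^{\cdots}$, and your verification ``$6/\pi=\sqrt 3/(2\pi)\cdot 2\sqrt 3$'' in fact evaluates to $3/\pi$. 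The paper's $6/\pi$ and its omission of the factor $\frac{1}{\sigma_\beta^2 n^2}$ in front of $g$ are misprints (the factor reappears correctly when the proposition is applied to bound $S_2$, and is the only version for which $n^3\,|\bP_\beta-(\cdots)|$ can be bounded, since $g(0,0)>0$ while $\bP_\beta(V_n=0,A_n=0)\to 0$), so this is you matching a wrong target rather than a gap in the reasoning. A second small point: in the tail split the threshold for $|s|$ should be a fixed constant $\delta>0$, not $n^{-1/2}$, so that in the region $|s|>\delta$, $n|t|\le\delta$ the bound is genuinely geometric, $\prod_j|\widehat\mu(s+jt)|\le\rho^{cn}$, rather than the $(1-c/n)^n\asymp e^{-c}$ bound you wrote, which would not suffice to make the tail integral $o(n^{-K})$ for all $K$.
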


\begin{proof}[Proof of Proposition  \ref{lltalgarea}]
Compared to what is done in \cite{CD08}, the improvement in \eqref{ala} comes from the fact that the increments of the Random walk
under $\bP_\beta$ have a finite fourth moment and a third moment that is null.  The proof is performed by 
using Fourier inversion formula, which can be done for instance by mimicking the proof of 
\cite[Theorem 2.3.10]{LL10}. For this reason we will not repeat it here.  
  
\end{proof}

We resume the proof of \eqref{limsu} by bounding from above the probability that there exists a piece of the $V=(V_i)_{i=0}^n$ trajectory 
of length smaller than $\frac{n^{2/3}}{\log n}$ with an algebraic  area (seen from its starting point)  that is larger than $\frac{n}{2}$ and/or that 
one of the increments of $V$ is larger than $(\log n)^2$.  Thus, we set  $\cB_n:=\cC_n\cup \cD_n$ with
$$\cC_n:=\bigcup_{i\in \{0,\dots,n-1\}} \{V_{i+1}-V_i\geq (\log n)^2\}\quad \text{and} \quad  \cD_n:=  \bigcup_{(j_1,j_2)\in J_n }  \{A_{j_1,j_2} -V_{j_1} (j_2-j_1)\geq \tfrac{n}{2}\}$$
where 
$J_n=\big\{(j_1,j_2)\in \{0,\dots,n\}^2\colon 0\leq j_2-j_1\leq \frac{n^{2/3}}{\log n}\big \}$ and $A_{s,t}=\sum_{i=s}^{t-1} V_i$. Then, for each 
$(j_1,j_2)\in J_n$ we apply Markov property at $j_1$ and  we get
\be{trayu}
\bP_{\beta,\mu_\beta} (\cD_n)\leq \sum_{(j_1,j_2)\in J_n} \bP_\beta(A_{j_2-j_1}\geq \tfrac{n}{2}).
\ee
Since under $\bP_{\beta,\mu_\beta}$, the random variable $V_1$ has
small exponential moments, there exists a constant $C>0$ such that
\begin{equation}
  \label{eq:majvstarcritical}
  \probmubeta{\sup_{1\le k\le n} \valabs{V_k} \ge x n} \le 2 e^{-C n x
  \wedge x^2} \qquad(x\ge 0, n\in \N)\,.
\end{equation}
We note that $A_{j_2-j_1}\geq \tfrac{n}{2}$ implies $\max \{|V_i|, i=1,\dots,j_2-j_1\}\geq \frac{n}{2(j_2-j_1)}$ so that finally 
we can use \eqref{eq:majvstarcritical} to prove that there exists $C''>0$ such that
\begin{align}
\nonumber \sup_{(j_1,j_2)\in J_n} \bP_\beta(A_{j_2-j_1}\geq \tfrac{n}{2}) &\leq  \bP_\beta\bigg(\sup_{1\leq j\leq n^{2/3}/\log(n)} |V_j|\geq \tfrac{n^{1/3}}{2} \log (n) \bigg)\\
 &\leq 2 e^{-C'' \log(n)^3},
\end{align} 
which (recall \eqref{trayu}) suffices to claim that $\bP_{\beta,\mu_\beta} (\cD_n)=o(1/n^{4/3})$.
Moreover, $\bP_{\beta,\mu_\beta}(V_1\geq (\log n)^2)\leq c e^{-\frac{\beta}{2} \log(n)^2}$ suffices to conclude that $\bP_{\beta,\mu_\beta}(\cC_n)=o(1/n^{4/3})$ which,
in turn, implies that $\bP_{\beta,\mu_\beta}(\cB_n)=o(1/n^{4/3})$.  
  
At this stage,  for $k\leq \eta n^{2/3}$,  we can partition the set $\{A_k=n-k, \tau=k, V_k=0\}$ depending on the indices at which 
a trajectory passes above $\sqrt{k}$ for the first and the last time. Thus, we set
$\xi_{\sqrt{k}}=\inf \{i\geq 1 \colon\, V_i\geq \sqrt{k}\}$ and $\widehat \xi_{\sqrt{k}}=\max \{i\leq k \colon\, V_i\geq \sqrt{k}\}$. We also consider the 
positions of $V$ at  $\xi_{\sqrt{k}}$ and $\widehat \xi_{\sqrt{k}}$ and the algebraic areas below $V$ in-between  $0$ and $\xi_{\sqrt{k}}$,  
$\xi_{\sqrt{k}}$  and $\widehat \xi_{\sqrt{k}}$ as well as $\widehat \xi_{\sqrt{k}}$ and $k$. Thus, 
 we set $\bar t=(t_1,t_2)$, $\bar x=(x_1,x_2)$, $\bar a=(a_1,a_2)$  and we write   
 
 \begin{equation}\label{decsub}
\{A_k=n-k, \tau=k, V_k=0\}= \cup_{(\bar t,\bar x,\bar a)\in G_{k,n} }  Y_{n,k}(\bar t,\bar x,\bar a),
\end{equation}
with
\begin{align}
Y_{n,k}(\bar t,\bar x,\bar a):=\Big\{\tau=k,\, &  \xi_{\sqrt{k}}=t_1,\,  \widehat \xi_{\sqrt{k}}= k-t_2,\\
\nonumber & V_{t_1}=x_1, V_{k-t_2}=x_2, V_k=0,\\
\nonumber &\hspace{2cm}  A_{t_1}=a_1,
A_{t_1,k-t_2}=n-k-a_1-a_2, A_{k-t_2,k}=a_2\Big\},
\end{align}
and with

 \begin{align}
\nonumber  G_{k,n}=\Big\{(\bar t,\bar x,\bar a)\in \N^6\colon\,  0\leq t_1\leq k-t_2 &\leq k-1,\\
  \nonumber                                                                           0\leq a_1&\leq k^{3/2}, \ 0\leq a_2\leq k^{3/2},\\
                                                                         & x_1\geq \sqrt{k},\  x_2 \geq \sqrt{k}\Big\}
\end{align}  
Then we set 
 \begin{align}\label{gti}
  \widetilde{G}_{k,n}=\big\{(\bar t,\bar x,\bar a)\in G_{k,n}\colon\,  k-t_1-t_2\leq \tfrac{n^{2/3}}{\log n}\quad  \text{and/or}\quad  x_1-\sqrt{k}> \log (n)^2\big\}
\end{align} 
and we note that if $(\bar t,\bar x,\bar a)\in \cup_{k=1}^{\eta n^{2/3}}  \widetilde G_{k,n}$, then either $x_1-\sqrt{k}\geq (\log n)^2$ and 
 $Y_{n,k}(\bar t,\bar x,\bar a)\subset \cC_n$ or $x_1\leq \sqrt{k}+ (\log n)^2$ and $k-t_1-t_2\leq n^{2/3}/\log n$ and then 
 \begin{align}\label{tbrlb}
  A_{t_1,k-t_2}-V_{t_1} (k-t_1-t_2)&=n-k-a_1-a_2-x_1(k-t_1-t_2)\\
\nonumber   &\geq n-k-3k^{3/2}-(\log n)^{2} k\\
\nonumber &\geq n-\eta n^{2/3} -3 \eta^{3/2} n -\log(n)^2 \eta n^{2/3}\geq \frac{n}{2},
\end{align}
provided $\eta$ is chosen small enough. Thus,  $Y_{n,k}(\bar t,\bar x,\bar a)\subset \cD_n$ so that
$$\cup_{k=1}^{\eta n^{2/3}} \cup_{(\bar t,\bar x,\bar a)\in \widetilde G_{k,n} }  Y_{n,k}(\bar t,\bar x,\bar a)\subset \cB_n.$$
Clearly, for $k\leq n^{2/3}/\log(n)$ we have $G_{k,n}=\widetilde G_{k,n}$ so that we should simply focus on bounding from above
$$\bP_{\beta,\mu_\beta}\Big(\cup_{ k=\frac{n^{2/3}}{\log n}}^{ \eta n^{2/3}}\,  \cup_{(\bar t,\bar x,\bar a)\in G_{k,n}\setminus \widetilde{G}_{k,n}} 
Y_{n,k}(\bar t,\bar x,\bar a) \Big).$$

Pick $n^{2/3}/\log n\leq k\leq \eta n^{2/3}$ and $(\bar t,\bar x,\bar a)\in G_{k,n}\setminus \widetilde{G}_{k,n}$. 
By applying Markov property at $t_1$ and $k-t_2$ we can write

\begin{equation}\label{decompo}
  \probmubeta{Y_{n,k}(\bar t,\bar x,\bar a)}= S_{1} S_2 S_3
\end{equation}
with 
\begin{align}
\nonumber S_1&=  \bP_{\beta,\mu_\beta} (\tau>t_1, V_{t_1}=x_1,\xi_{\sqrt{k}}=t_1, A_{t_1}=a_1)\\
\nonumber S_2&=  \bP_{\beta, x_1}(\tau>k-t_1-t_2, V_{k-t_1-t_2}=x_2-x_1, A_{k-t_1-t_2}=n-k-a_1-a_2)\\
S_3&=  \bP_{\beta}(\tau>t_2, V_{t_2}=x_2,\xi_{\sqrt{k}}=t_2, A_{t_2}=a_2).
\end{align}
Since we are looking for an upper bound of the r.h.s. in \eqref{decompo}, we can easily remove the restriction  $\{\tau>k-t_1-t_2\}$ in   $S_2$ 
and write
\begin{align}\label{SS2}
S_2&\leq  \bP_{\beta}( V_{k-t_1-t_2}=x_2-x_1, A_{k-t_1-t_2}=n-k-a_1-a_2-x_1(k-t_1-t_2))
\end{align}

%
%

Therefore, it remains to bound 
$$\sum_{k=n^{2/3}/\log n}^{\eta n^{2/3}} \sum_{(\bar t,\bar x,\bar a)\in G_{k,n}\setminus\widetilde G_{k,n} } 
  S_{1} S_2 S_3$$
 and  we recall \eqref{SS2} and Proposition  \ref{lltalgarea} that yield $S_2\leq  \widetilde{S}_2+ \widehat{S}_2$ with  $\widetilde{S}_2 = \tfrac{C_1}{(k-t_1-t_2)^3}$ and 
  \begin{align}
 \widehat{S}_2 = \tfrac{C_2}{ (k-t_1-t_2)^2}\,  
 g(\tfrac{x_2-x_1}{\sigma_\beta \sqrt{k-t_1-t_2}}, \tfrac{n-k-a_1-a_2-x_1(k-t-1-t_2)}{(k-t_1-t_2)^{3/2}})
 \end{align}
 with $g(y,z)=\frac{6}{\pi} e^{-2 y^2-6 z^2 +6 yz}\leq \frac{6}{\pi} e^{-\frac{3}{2} z^2} $. We recall \eqref{tbrlb} and we write
 \begin{align}
 \widetilde{S}_2 = \tfrac{C_1}{(k-t_1-t_2)^3} \quad \quad   \widehat{S}_2 =\tfrac{C_2}{ (k-t_1-t_2)^2} e^{-\frac{3}{8} \frac{n^2}{(k-t_1-t_2)^3}} 
 \end{align}
 and then for all $(\bar t,\bar x,\bar a)\in  G_{k,n}\setminus\widetilde G_{k,n}$ we have $ \widetilde{S}_2 \leq C_1 \log(n)^3/n^2$ and at the same time 
\be{bobs1}
\sum_{(\bar t,\bar x,\bar a)\in G_{k,n} } 
S_{1} S_3\leq \bP_{\beta,\mu_\beta} (\xi_{\sqrt{k}}<\tau_0)  \bP_{\beta} (\xi_{\sqrt{k}}<\tau_0)\leq \frac{C_3}{k}. 
\ee
Thus,  $$\sum_{k=n^{2/3}/\log n}^{\eta n^{2/3}} \sum_{(\bar t,\bar x,\bar a)\in G_{k,n}\setminus\widetilde G_{k,n}} 
  S_{1} \widetilde{S}_2 S_3\leq \sum_{k=n^{2/3}/\log n}^{\eta n^{2/3}} \frac{C_4 \log(n)^3}{n^2 k}\leq \frac{C_5 \log(n)^4}{n^2}=o(1/n^{4/3}).$$
  It remains to bound from above
  $\sum_{k=n^{2/3}/\log n}^{\eta n^{2/3}} \sum_{(\bar t,\bar x,\bar a)\in G_{k,n}\setminus\widetilde G_{k,n} } 
  S_{1} \widehat{S}_2 S_3$.
  We rewrite 
  $$\widehat{S}_2=\tfrac{C_2}{n^{4/3}} \big[\tfrac{n^{2/3}}{ (k-t_1-t_2)}\big]^2 e^{-\frac{3}{8} \big(\frac{n^{2/3}}{k-t_1-t_2}\big)^3}$$
  and we note that $x^2 e^{-3 x^3/8}\leq e^{-x^3/4}$ for $x$ large enough. Since 
 $k\leq \eta n^{2/3}$ it comes that $n^{2/3}/(k-t_1-t_2)\geq n^{2/3}/k\geq 1/\eta$ so that by choosing $\eta$ small enough we get
\be{bobs2}
\widehat{S}_2\leq \tfrac{C_2}{n^{4/3}}  e^{-\frac{1}{4} \big(\frac{n^{2/3}}{k-t_1-t_2}\big)^3}\leq  \tfrac{C_2}{n^{4/3}}  e^{-\frac{1}{4} \big(\frac{n^{2/3}}{k}\big)^3}
\ee
 and then we use \eqref{bobs1} and \eqref{bobs2} to get
\begin{align}\label{finfo}
\nonumber\sum_{k=n^{2/3}/\log n}^{\eta n^{2/3}} \sum_{(\bar t,\bar x,\bar a)\in G_{k,n}\setminus\widetilde G_{k,n} } 
S_{1} \widehat{S}_2 S_3&\leq \sum_{k=n^{2/3}/\log n}^{\eta n^{2/3}} \tfrac{C_2}{n^{4/3}}  e^{-\frac{1}{4} \big(\frac{n^{2/3}}{k}\big)^3} \sum_{(\bar t,\bar x,\bar a)\in G_{k,n}} 
S_{1}  S_3\\
\nonumber & \leq \sum_{k=n^{2/3}/\log n}^{\eta n^{2/3}} \tfrac{C_2}{ k n^{4/3}}  e^{-\frac{1}{4} \big(\frac{n^{2/3}}{k}\big)^3}\\
& \leq  \tfrac{C_2}{n^{4/3}} \bigg[ \tfrac{1}{ n^{2/3}} \sum_{k=n^{2/3}/\log n}^{\eta n^{2/3}}  \tfrac{n^{2/3}}{ k }  e^{-\frac{1}{4} \big(\frac{n^{2/3}}{k}\big)^3}\bigg]
\end{align}
and the Riemann sum between brackets above converges to $\int_0^\eta (1/x) e^{-1/(4 x^3)} dx$ so that the r.h.s. in \eqref{finfo}
is smaller that $\gep/n^{4/3}$ as soon as $\eta$ is chosen small enough and this completes the proof.

  \end{proof}

\subsubsection{Step 2}    
Our aim is to show that for all $\eta>0$,
\be{condu}
\lim_{n\to \infty} n^{4/3} v_n= (1+e^{\beta/2})\,\sqrt{\frac{\espbeta{V_1^2}}{2 \pi}} \int_\eta^{+\infty}
  x^{-3} w(x^{-\frac32})\, dx
\ee
    

\begin{proof}
By Theorem 8 of
Kesten~\cite{Kes63} (see also Theorem A.11 of \cite{cf:Gia}) and  since
$\espbeta{V_1^2}<+\infty$, we can state that $\probbeta{\tilde \tau=n} \sim C n^{-3/2}\quad \text{with}\quad C=(\espbeta{V_1^2}/2 \pi)^{1/2}$ and with $\tilde\tau=\inf\{i\geq 1\colon\, V_i\leq 0\}$ which may differ from $\tau$ (recall \ref{premm}) when $V_0=0$ only. In Appendix \ref{lltt}, we extend this local limit theorem to the random walk with initial
distribution $\mu_\beta$ and we obtain
\begin{equation}
\label{eq:equivptauegaln}
\probmubeta{\tau=n} \sim C_\tau n^{-3/2}\quad \text{with}\quad C_\tau=(1+e^{\beta/2})\, \sqrt{\frac{\espbeta{V_1^2}}{2 \pi}}.
\end{equation}

 Let 

$$ v'_n := \sum_{\eta n^{2/3} \le k\le n} \probmubeta{\tau=k}
k^{-3/2} w\etp{\frac{n-k}{k^{3/2}}} \,.$$
We recall the definition of $R_n$  in \eqref{errorterm} and   we write
\begin{align}\label{vn}
\nonumber \valabs{v_n - v'_n} &\le \sum_{\eta n^{2/3} \le k\le n} \probmubeta{\tau=k}
k^{-3/2} R_k \le C' R_{\eta n^{2/3}}   \sum_{\eta n^{2/3} \le k\le n} k^{-3}\\
&\le C'' R_{\eta n^{2/3}}\unsur{\eta^2 n^{4/3}}  = o(n^{-4/3})\,.
\end{align}
We can establish  by  dominating convergence (see Remark \ref{rem:cvdom}) that 
\be{convint}
n^{4/3} v'_n \to C_\tau \int_\eta^{+\infty}t^{-3} w(t^{-3/2})\, dt.
\ee
By putting together \eqref{vn}, \eqref{convint} we obtain \eqref{condu}
and this completes the proof.
\end{proof}

\end{proof}

\subsection{Proof of Theorem \ref{pfa2} \ (2)}\label{pfy2}

Let  $(\mathfrak{U}_i)_{i=1}^\infty$ be the sequence of inter-arrivals of a $1/3$-stable regenerative set  \footnote{We refer to  \cite[Appendix A]{CSZ}  for a self-contained introduction of the 
 $\alpha$-stable regenerative sets on $[0,1]$ (see also
 \cite{Ber96}). In fact, it is useful to keep in mind that such a set is the limit in distribution of 
 the set $\frac{\tau}{N} \cap[0,1]$ when $\tau$ is a regenerative process on $\N$ with an inter-arrival law $K$
 that satisfies $K(n)\sim L(n)/n^{1+\alpha}$ and with $L$ a slowly varying function. The alpha-stable regenerative set can also be viewed as the 
 zero set of  a Bessel bridge on $[0,1]$ of dimension
 $d=2(1-\alpha)$.} 
 $\taufrak$ on $[0,1]$, conditioned on $1\in\taufrak$ and denote  by $(\mathfrak{U}^i)_{i=0}^\infty$ its order statistics.
 Let  $(Y_i)_{i=1}^\infty$ be an IID sequence of continuous random
 variables, independent of $(\mathfrak{U}_i)_{i=0}^\infty$, with density 
\be{density}
\textstyle \dd\bP_{Y_1}(x)\propto \frac{1}{x^3}\, w\big(\frac{1}{x^{3/2}}\big) \ind_{\R^+}(x).
\ee
We are first going to prove that 
\begin{equation}
  \label{eq:limmextcritique}
  \lim_{L\to +\infty} \frac{N_l}{L^{2/3}} =_{law} \sum_{i=1}^{+\infty}
  Y_i\,  (\mathfrak{U}^i)^{2/3}=_{law} \sum_{i=1}^{+\infty}
  Y_i\,  \mathfrak{U}_i^{2/3}
\end{equation}
where the second identity in law in \eqref{eq:limmextcritique} is straightforward
and then we shall identify the distribution of $\sum_{i=1}^{+\infty}
  Y_i\,  \mathfrak{U}_i^{2/3}$ with the distribution of $g_1$ conditionnaly on $B_{g_1}=0$.
  \smallskip

We recall (\ref{premm}-\ref{trois}) and we consider the i.i.d. sequence of random vectors $(\mathfrak{N}_i,\mathfrak{A}_i)_{i=1}^{\infty}$ and we recall 
that $X_i=\mathfrak{N}_i+\mathfrak{A}_i$ for $i \in \N$. We recall that, under $\bP_\beta$, the first excursion has law $\bP_{\beta,0}$ and the next excursions 
have law $\bP_{\beta,\mu_\beta}$.  Let us set $S_n=X_1 + \cdots + X_n$ and
$v_L:=\max\{i\geq 0\colon S_i\leq L\}$. We recall \eqref{XX} and we consider the sequence $$(\mathfrak{N}_i,\mathfrak{A}_i,X_i)_{i=1}^{v_L}$$
under the law $P_\beta(\cdot | L\in \mathfrak{X} )$. We denote by 
$X_{r_1} \geq \dots \geq X_{r_{v_L}}$  the order statistics of $(X_i)_{i=1}^{v_L}$ such that if $X_{r_i}=X_{r_j}$ and $i<j$ then $r_i<r_j$. 
To simplify notations  we set $(\mathfrak{N}^i,\mathfrak{A}^i, X^i)=(\mathfrak{N}_{r_i},\mathfrak{A}_{r_i}, X_{r_i})$ for $i\in \{1,\dots,L\}$.

To begin with, we will prove \eqref{eq:limmextcritique}  subject to Propositions \ref{1} and Claim \ref{2} below. Then, the
remainder of this section will be dedicated to the proof of Propositions \ref{1} and Claim \ref{2}.

\begin{proposition}\label{1}
\be{tobpr}
\lim_{L\to \infty} \frac{\sum_{i=1}^{v_L}  \mathfrak{N}_i}{L^{2/3}}=_{\text{Law}} \sum_{i=1}^\infty Y_i\,  (\mathfrak{U}^i)^{2/3}.
\ee

\end{proposition}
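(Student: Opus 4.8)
The plan is to show that the rescaled horizontal extension $\sum_{i=1}^{v_L}\mathfrak{N}_i/L^{2/3}$, under $\bP_\beta(\cdot\mid L\in\mathfrak{X})$, converges in law to $\sum_{i=1}^\infty Y_i\,(\mathfrak U^i)^{2/3}$, where the $Y_i$ have the density \eqref{density} and $(\mathfrak U^i)$ are the ordered jumps of the $1/3$-stable regenerative set $\taufrak$ on $[0,1]$ conditioned on $1\in\taufrak$. The starting point is the tail estimate $\bP_{\beta,\mu_\beta}(X_1=n)=c_{1,\beta}n^{-4/3}(1+o(1))$ from Lemma \ref{tes} (and the analogous estimate under $\bP_{\beta,0}$, which differs only by a bounded factor and does not affect the scaling), so that the renewal set $\mathfrak X$ is in the domain of attraction of the $1/3$-stable subordinator. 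By Doney's theorem (already invoked for \eqref{attain}) the conditioned renewal set $\frac{1}{L}\,\mathfrak X\cap[0,1]$ under $\bP_\beta(\cdot\mid L\in\mathfrak X)$ converges in distribution to $\taufrak\mid\{1\in\taufrak\}$; equivalently, the ordered increments $(X^i/L)_{i\ge 1}$ converge jointly to $(\mathfrak U^i)_{i\ge 1}$. This is a now-standard fact about order statistics of i.i.d.\ heavy-tailed variables conditioned on their sum being $\approx L$ — I would cite \cite[Appendix A]{CSZ} and \cite{Ber96}.

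First I would establish the joint convergence of the \emph{pairs} $(\mathfrak A^i/L,\ \mathfrak N^i/L^{2/3})_{i\ge1}$. Conditionally on $X_i=n$ being large, Theorem \ref{theo:wachtellclt} (in the $\mu_\beta$-started form \eqref{errorterm}) says that $G_{\tau-1}=\mathfrak A_i$ given $\tau=\mathfrak N_i=k$ concentrates at scale $k^{3/2}$ with density $w(\cdot/k^{3/2})$; inverting, given the \emph{area} $\mathfrak A_i\approx a$ one has $\mathfrak N_i\approx a^{2/3}$ with a density proportional to $\tfrac{1}{x^3}w(x^{-3/2})$ after the change of variables — this is exactly \eqref{density}. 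Since $X_i=\mathfrak A_i+\mathfrak N_i$ and $\mathfrak N_i=o(\mathfrak A_i)$ on the event that $X_i$ is large, we get $\mathfrak A_i/X_i\to1$, hence $\mathfrak A^i/L\to\mathfrak U^i$, and $\mathfrak N^i\approx(\mathfrak A^i)^{2/3}\approx(L\,\mathfrak U^i)^{2/3}=L^{2/3}(\mathfrak U^i)^{2/3}$ with the multiplicative fluctuation $Y_i$ of density \eqref{density}, independent across $i$ and independent of $(\mathfrak U^i)$. This gives, for each fixed $k$, $\big(\mathfrak N^1+\cdots+\mathfrak N^k\big)/L^{2/3}\Rightarrow \sum_{i=1}^k Y_i(\mathfrak U^i)^{2/3}$.

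It then remains to control the tail, i.e.\ to show that the small excursions contribute negligibly: for every $\epsilon>0$,
\be{tailcontrol}
\limsup_{L\to\infty}\bP_\beta\Big(\tfrac{1}{L^{2/3}}\textstyle\sum_{i>k}\mathfrak N^i\ \geq\ \epsilon\ \Big|\ L\in\mathfrak X\Big)\ \xrightarrow[k\to\infty]{}\ 0.
\ee
This is the part I expect to be the main obstacle, and it is handled by Claim \ref{2} (whatever its precise statement): one bounds $\sum_{i>k}\mathfrak N^i$ in terms of $\sum_{i>k}\mathfrak A^i$ via $\mathfrak N_i\le C\,\mathfrak A_i^{2/3}$ up to logarithmic corrections (using that on the renewal event the walk increments are typically $O(\log)$-bounded), then uses that the small-jump sum $\sum_{i>k}\mathfrak U^i$ of the limiting stable set can be made arbitrarily small by taking $k$ large, together with the uniform integrability provided by the $n^{-4/3}$ tail. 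Once finite-dimensional convergence and \eqref{tailcontrol} are in hand, a standard approximation argument (the map $(\mathfrak u^i,y^i)\mapsto\sum y^i(\mathfrak u^i)^{2/3}$ is continuous on the relevant space once the tail is uniformly small) yields \eqref{tobpr}. The second identity in law in \eqref{eq:limmextcritique}, namely $\sum Y_i(\mathfrak U^i)^{2/3}=_{\text{law}}\sum Y_i\mathfrak U_i^{2/3}$, is immediate since the $Y_i$ are i.i.d.\ and independent of the set, so reordering the $\mathfrak U_i$ does not change the law of the sum.
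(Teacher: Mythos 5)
Your overall structure matches the paper's: the proof is split into finite-dimensional convergence of the largest $k$ excursions (Step~1 of the paper, proving~\eqref{liml}) and a negligible-tail estimate (Step~2, proving~\eqref{lims}). Your account of Step~1 is essentially the paper's: condition on $\sigma(X_i)$ so that the $\mathfrak{N}_i$ become independent with laws $\bP_{\beta,\cdot}(\cdot\mid X=X_i)$, use the convergence of $\tfrac{1}{L}\mathfrak{X}\cap[0,1]$ to $\taufrak$ conditioned on $1\in\taufrak$ (cited from \cite{CSZ}), and use the conditional local limit theorem for the duration given the area of an excursion (Lemma~\ref{col}). That part is fine.

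There are, however, genuine gaps in your treatment of the tail. First, you attribute the tail control to Claim~\ref{2}, writing ``it is handled by Claim~\ref{2} (whatever its precise statement).'' This is incorrect: Claim~\ref{2} has nothing to do with the tail — it is the identity expressing $P_{L,\beta}(N_l/L^{2/3}\le t)$ as a mixture over the number of steps $r$ spent at zero near the right end of the walk, and it is used in the proof of Theorem~\ref{pfa2}~(2) to pass from the renewal-process statement (Proposition~\ref{1}) to the polymer measure, not inside Proposition~\ref{1}. Second, your sketch of the tail bound — ``$\mathfrak{N}_i\le C\,\mathfrak{A}_i^{2/3}$ up to logarithmic corrections'' — is not how the argument goes and is not a bound that holds pathwise; the conditional law of $\mathfrak{N}_i$ given $X_i$ has heavy enough tails that one works with conditional \emph{moments}, specifically the uniform bound $\sup_m \bE_{\beta,a}\big[\mathfrak{N}/X^{2/3}\mid X=m\big]<\infty$ from Lemma~\ref{col}, which reduces the problem to controlling $\sum_{i>k}(X^i/L)^{2/3}$. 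Third, and most importantly, you do not address the real obstacle: the tail estimate must hold \emph{under the conditioning} $L\in\mathfrak{X}$, and one cannot just cite weak convergence of the rescaled renewal set to get a uniform tail bound. The paper handles this by three devices that are absent from your plan: a restriction to the event $\{v_{L/2},v'_{L/2}\le cL^{1/3}\}$ made possible by Lemma~\ref{vl}; a reversibility argument that reduces to controlling the order statistics on $[0,L/2]$; and a Radon–Nikodym bound (Lemma~\ref{rem}) showing that for functionals of $\mathfrak{X}\cap[0,L/2]$ the conditioned expectation is dominated by the unconditioned one up to a constant, followed by an explicit calculation via the representation of the order statistics of $(X_i)$ in terms of $F^{-1}$ and Gamma variables. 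Without these, the phrase ``uniform integrability provided by the $n^{-4/3}$ tail'' does not close the argument.
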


\begin{claim}\label{2}
For $\beta=\beta_c$, $t\in [0,\infty)$ and $L$ large enough, 
\be{mixmix}
P_{L,\beta}\bigg(\frac{N_l}{L^{2/3}}\leq t\bigg)  = \sum_{r=0}^{t L^{2/3}-1} \xi_{r,L} \, \bP_\beta\bigg( \frac{r+\sum_{i=1}^{v_{L-r+1}}  \mathfrak{N}_i}{L^{2/3}}\leq t
\, | \, L-r+1 \in \mathfrak{X}\bigg)
\ee
with 
\be{defalp}
\xi_{r,L} =\frac{(1-e^{-\beta/2}) \probbeta{L-r+1 \in \mathfrak{X}}}{ c_\beta^{r-1}\,  \widetilde Z_{L,\beta}}, \quad r\in \{0,\dots,L\}. 
\ee
\end{claim}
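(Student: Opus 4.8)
The identity in Claim \ref{2} is essentially a disintegration of the polymer measure at $\beta=\beta_c$ according to the number $r$ of zero steps that the auxiliary random walk $V$ performs before its first non-zero value, combined with the renewal decomposition \eqref{XX}. I would start from the random walk representation \eqref{transf2}--\eqref{tgh}: since $\Gamma_\beta=1$ at criticality, for every $t$ we have
$$
P_{L,\beta}\Big(\tfrac{N_l}{L^{2/3}}\le t\Big)=\frac{1}{c_\beta\,\widetilde Z_{L,\beta}}\sum_{N=1}^{\lfloor tL^{2/3}\rfloor}\bP_\beta\big(G_N(V)=L-N,\,V_{N+1}=0\big),
$$
exactly as in \eqref{parffu}. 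The first step is then to reproduce the computation leading to \eqref{TTF}: partition $\{G_N(V)=L-N,V_{N+1}=0\}$ according to the length $r\in\{0,\dots\}$ of the initial run $V_1=\dots=V_r=0$, $V_{r+1}\neq 0$, which contributes a factor $(1/c_\beta)^r$, and note that under $\{V_{r+1}\neq 0\}$ the remaining trajectory is a genuine excursion piece. This produces the prefactor $\xi_{r,L}$ in \eqref{defalp} up to keeping careful track of the constraint $N\le \lfloor tL^{2/3}\rfloor$ rather than $N\le L$.

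\textbf{Key steps.} (1) Carry out the $r$-disintegration above, tracking the event $\{N_l\le tL^{2/3}\}$. After the $r$ initial zero steps the path uses $N'=N-r$ further steps, so the constraint becomes $r+N'\le tL^{2/3}$, and the geometric area condition becomes $G_{N'}(\widehat V)=L-r-N'$ with $V_{N'+1}=0$ for the shifted walk $\widehat V$ started at $0$; replacing $L$ by $L-r+1$ as in \eqref{TTF} (the $+1$ comes from the identity $\probbeta{U_1=x}/\probbeta{U_1\ge x}=1-e^{-\beta/2}$ used there to pass from $\{V_{N'}\neq 0,V_{N'+1}=0\}$ to $\{V_{N'}\neq 0,\,V_{N'}V_{N'+1}\le 0\}$, i.e. to a renewal endpoint of $\mathfrak X$) one recognizes $\probbeta{L-r+1\in\mathfrak X}$ times the conditional probability, under $\bP_\beta(\cdot\mid L-r+1\in\mathfrak X)$, that the total horizontal extension $r+\sum_{i=1}^{v_{L-r+1}}\mathfrak N_i$ does not exceed $tL^{2/3}$. (2) Identify the normalizing constant: the denominator $c_\beta\widetilde Z_{L,\beta}$ combines with the $(1-e^{-\beta/2})(1/c_\beta)^r$ factors to give precisely $\xi_{r,L}$ as defined, and one checks $\sum_r\xi_{r,L}\,\bP_\beta(L-r+1\in\mathfrak X)$-type sums reproduce \eqref{TTF}, so the $\xi_{r,L}$ form (sub-)probability weights. (3) Truncate the sum over $r$ at $tL^{2/3}-1$: this is automatic since $r\le N\le tL^{2/3}$ forces $r\le tL^{2/3}-1$ when we also need at least one non-zero step, i.e. $r<N$; the edge term $r=L$ (all steps zero), present in \eqref{TTF} as $(1/c_\beta)^L$, contributes $0$ once $L$ is large because then $N_l=L\gg tL^{2/3}$, which is why the stated identity holds only "for $L$ large enough".

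\textbf{Main obstacle.} The routine part is the algebra of the disintegration; the one genuinely delicate point is the bookkeeping that turns the raw constraint "$G_{N'}(\widehat V)=L-r-N'$, $V_{N'}\neq 0$, $V_{N'+1}=0$" into "$L-r+1\in\mathfrak X$ and the excursion decomposition reconstitutes $\mathfrak N_i,\mathfrak A_i$". One must verify, using Proposition \ref{pro:renstruccritical} and the preceding Lemma on the law of $V_\tau$ (in particular that the first excursion under $\bP_{\beta,0}$ and the subsequent ones under $\bP_{\beta,\mu_\beta}$ assemble correctly), that the total horizontal extension of the walk up to its last renewal before level $L-r+1$ is exactly $\sum_{i=1}^{v_{L-r+1}}\mathfrak N_i$, and that conditioning on $\{L-r+1\in\mathfrak X\}$ is the right conditioning to match $\{V_{N'}V_{N'+1}\le 0\}$. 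This is the same mechanism already used to pass from \eqref{parffu} to \eqref{TTF}, so I would cite that computation and only spell out the extra truncation by $t$; nothing new is needed beyond care with indices.
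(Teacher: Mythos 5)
Your plan reproduces the paper's proof of Claim~\ref{2}: apply the random walk representation at $\Gamma_{\beta_c}=1$, peel off a run of zeros of $V$ (picking up a factor $c_\beta^{-r}$), use the ratio identity $\probbeta{U_1=x}/\probbeta{U_1\ge x}=1-e^{-\beta/2}$, and identify the renewal event $\{L-r+1\in\mathfrak{X}\}$, while tracking the truncation $N\le tL^{2/3}$ and dropping the $(1/c_\beta)^L$ boundary term for large $L$. Two corrections are needed, however. First, the prefactor in your opening display is wrong: since \eqref{parffu} gives $\sum_{N\le L}\probbeta{G_N=L-N,V_{N+1}=0}=\tilde Z_{L,\beta}/c_\beta$, the correct relation is $P_{L,\beta}(N_l/L^{2/3}\le t)=\frac{c_\beta}{\tilde Z_{L,\beta}}\sum_{N\le tL^{2/3}}\probbeta{G_N=L-N,V_{N+1}=0}$ as in \eqref{lois}, not $\frac{1}{c_\beta\tilde Z_{L,\beta}}\sum(\cdots)$; your factor is off by $c_\beta^{2}$ and would produce $c_\beta^{r+1}$ in place of the $c_\beta^{r-1}$ of \eqref{defalp}, so the weights would not match $\xi_{r,L}$.

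Second, you disintegrate by the number of zeros of $V$ \emph{before its first nonzero value}, whereas the computation \eqref{TTF} you cite (and the paper's proof) peels off zeros at the \emph{end} of the walk, i.e.\ $V_{N-r+1}=\cdots=V_N=0$, $V_{N-r}\neq 0$, with $N'=N-r$. That direction is not cosmetic: it is what delivers the constraint $V_{N'}\neq 0$ which you then use, both for the ratio identity (meaningful only after conditioning on a nonzero $V_{N'}$) and for recognizing $N'+1$ as a renewal time $\tau_m$ with $S_m=G_{\tau_m-1}+\tau_m=(L-N'-r)+(N'+1)=L-r+1$. Peeling zeros from the left gives $V_1\neq 0$ on the shifted walk, not $V_{N'}\neq 0$; to make that route work one would have to time-reverse the shifted walk (using the symmetry of the increments), a step you do not state. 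Note finally that the $+1$ in $L-r+1$ comes from $\tau_m=N'+1$ in the expression for $S_m$, not from the $(1-e^{-\beta/2})$ identity, which only trades $\{V_{N'+1}=0\}$ for $\{V_{N'}V_{N'+1}\le 0\}$ at constant cost without shifting any index.
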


Pick $t\in [0,\infty)$ and $\gep>0$. With Claim \ref{2}  and with \eqref{TTF} and \eqref{attain} we obtain that there exists an $r_\gep \in \N$  such that, provided $L$ is chosen large enough, we have   $\sum_{r=0}^{r_\gep} \xi_{r,L}\in [1-\gep ,1]$
and $\sum_{r\geq r_\gep} \xi_{r,L} \leq \gep$. Then, it suffices to apply Proposition \ref{1} to each 
probability indexed by $r\in \{1,\dots,r_\gep\}$ in the r.h.s. of \eqref{mixmix} to conclude that, for $L$ large enough
\be{vibound}
(1-\gep) P\bigg(\sum_{i=1}^\infty Y_i\,  (\mathfrak{U}^i)^{2/3}\leq t\bigg)-\gep\leq P_{L,\beta}\Big(\frac{N_l}{L^{2/3}}\leq t\Big)\leq 2 \gep+ P\bigg( \sum_{i=1}^\infty Y_i\,  (\mathfrak{U}^i)^{2/3}\leq t\bigg),
\ee
which completes the proof of \eqref{eq:limmextcritique}.


\subsubsection{Proof of Proposition \ref{1}}
To begin with, let us distinguish between the $k$ excursions associated with the first $k$ variables of the order statistics $(X^i)_{i=1}^{v_L}$ and   the others, i.e.,   
\be{ortu}
\frac{\sum_{i=1}^{v_L}  \mathfrak{N}_i}{L^{2/3}}=A_{k,L}+B_{k,L}
\ee
with
\begin{align}\label{supra}
A_{k,L}&=\sum_{i=1}^k \frac{\mathfrak{N}^i}{L^{\frac23}}\quad \text{and}\quad B_{k,L}=\sum_{i= k+1}^{v_L} \frac{\mathfrak{N}^i}{(X^i)^{\frac23}}\, \Big(\frac{X^i}{L}\Big)^{\frac23},
\end{align}
Then, the proof  of Proposition  \ref{1} will be deduced from the following two steps.
\begin{itemize}
\item[\bf Step 1] {} Show that for all $k\in \N$ and under $\bP_{\beta}(\cdot\,|\, L\in \mathfrak{X})$, 
\be{liml}
\lim_{L\to \infty} A_{k,L}=_{\text{law}} \textstyle \sum_{i=1}^k   Y_i \, (\mathfrak{U}^i)^{\frac 23}.
\ee
  \item[\bf Step 2] {} Show that for all $\gep>0$,
\be{lims}
\lim_{k\to \infty} \limsup_{L\to \infty} \bP_\beta (B_{k,L} \geq \gep | L\in \mathfrak{X})=0.
\ee
\end{itemize} 
Before proving \eqref{liml} and \eqref{lims}, we need to settle some preparatory lemmas.
To begin with we let $F$ be the distribution function of $X$ under $\bP_{\beta,\mu_\beta}$ that is $F(t)=\bP_{\beta,\mu_\beta}(X\leq t)$ for $t\in \R$ 
and $F^{-1}$ its pseudo-inverse, that is $F^{-1}(u)=\inf\{t\in \R\colon\, F(t)\geq u\}$ for $u\in (0,1)$.
\bl{fm}
There exists $ C>0$ such that 
\be{pi}
F^{-1}(u)\sim\tfrac{C}{(1-u)^3}\quad \text{as}\  u\to 1^-.
\ee
\el
\begin{proof}
The proof is a straightforward consequence of \eqref{eq:asympatauplustau}.

\end{proof}

Recall \eqref{density}. The next lemma deals with the convergence in law, as $m \to \infty$, of the horizontal extension of an excursion renormalized by $m^{2/3}$
and conditioned on  the area of the excursion being equal to $m$.    
\bl{col}
For all $\beta>0$ and all $m\in \N$ we consider the random variable
$\frac{\mathfrak{N}}{m^{2/3}}$ under the laws $\bP_{\beta,a}( \cdot  | X=m)$ with $a\in \{0,\mu_\beta\}$. We have 
\be{impp}
\lim_{m \to \infty} \frac{\mathfrak{N}_1}{m^{2/3}}=_{\text{Law}} Y_1,
\ee
and also that the sequence $\big(\bE_{\beta,a}\big( \frac{\mathfrak{N}}{m^{2/3}} \big | X=m \big)\big)_{m\in \N}$ is bounded.

\el
\begin{proof}
With the help of Theorem \eqref{theo:wachtellclt}  we can use the following equality 
\be{equall}
P_{\beta,\mu_\beta}(\mathfrak{N}=n\, \big|\, X=m)=P_{\beta,\mu_\beta}(\mathfrak{A}=m-n\, \big|\, \mathfrak{N}=n)\ 
\frac{P_{\beta,\mu_\beta}(\mathfrak{N}=n)}{P_{\beta,\mu_\beta}(X=m)}, 
\ee
combined with \eqref{eq:equivptauegaln} and \eqref{eq:asympatauplustau}, to claim that  there exists a $D>0$ such that 
\be{tcll}
P_{\beta,\mu_\beta}(\mathfrak{N}=n\, \big|\, X=m)=D \frac{m^{4/3}}{n^3} w\Big(\frac{m-n}{n^{3/2}}\Big) + \frac{m^{4/3}}{n^3} (\gep_1(m)+\gep_2(n))
\ee
with $\gep_1(m)$ and $\gep_2(n)$ vanishing as $m,n\to \infty$.

To display an upper bound for the sequence $\big(\bE_{\beta,\mu_\beta}\big( \frac{\mathfrak{N}}{m^{2/3}} \big | X=m \big)\big)_{m\in \N}$ it suffices of course to consider 
\begin{align}\label{bbd}
\bE_{\beta,\mu_\beta}\Big( \frac{\mathfrak{N}}{m^{2/3}} \ind_{\{\mathfrak{N}\geq m^{2/3}\}}\big | X=m \Big)&=
\sum_{n=m^{2/3}}^l \frac{n}{m^{2/3}} P_{\beta,\mu_\beta}(\mathfrak{N}=n\, \big|\, X=m)\\
\nonumber &=\sum_{n=m^{2/3}}^m D \frac{m^{2/3}}{n^2} w\Big(\frac{m-n}{n^{3/2}}\Big)+ 
\sum_{n=m^{2/3}}^m \frac{m^{2/3}}{n^2} (\gep_1(m)+\gep_2(n))
\end{align}
where we have used \eqref{tcll}. Since the second term in the r.h.s. in \eqref{bbd} clearly vanishes as $m \to \infty$, we focus on the first term   and since $w$ is uniformly continuous because $s\to w(s)$ is continuous on $[0,\infty)$ and vanishes as $s\to \infty$,
we can write the first term  as a Riemann sum that converges to $\int_{1}^\infty \frac{D}{x^2}\,  w(\frac{1}{x^{3/2}}) dx$ plus a rest that vanishes as $m \to \infty$
and this gives us the expected boundedness.

Similarly, the convergence in law is obtained by  picking $t\in [0,\infty)$ and by writing\\ 
$\bP_{\beta,\mu_\beta}\Big(\frac{\mathfrak{N}_1}{l^{2/3}} \leq t\, \big| \, X=m \Big) =:  \tilde u_m + \tilde v_m$ where
\begin{align}\label{limss}
 \tilde u_m &=\frac{1}{\bP_{\beta,\mu_\beta}(X=m)}  \sum_{k=1}^{\eta\,  m^{2/3}} \probmubeta{A_{k-1} = m-k, \tau=k} \\
\tilde v_m &=\frac{1}{\bP_{\beta,\mu_\beta}(X=m)}  \sum_{k=\eta\, m^{2/3}}^{t m^{2/3} }
      \probmubeta{A_{k-1} = m-k, \tau=k} 
\end{align}
where $\eta\in (0,t)$. 
We note easily that $\tilde u_m=[(1-e^{-\beta/2})\,\bP_{\beta,\mu_\beta}(X=m)]^{-1} \, u_m$ with $u_m$ defined in \eqref{auta}. Therefore,  
\eqref{eq:asympatauplustau} and \eqref{limsu} tell us that $\tilde u_m$ can be made arbitrarily small provided $\eta$
is small enough and  $m$ large enough. Thus, it remains to deal with $\tilde v_m$, which,
with the help of \eqref{eq:asympatauplustau} is treated as the second term in the  r.h.s. in \eqref{auta}. Thus, \eqref{condu} tells us that there exists a $D>0$ 
such that 
$\lim_{m \to \infty} \tilde v_m= \int_\eta^{t} D t^{-3} w(t^{-3/2})\, dt$ and this suffices to complete the proof of \eqref{impp}.


\end{proof}

\bl{vl} For $\beta>0$ and $\gep>0$, there exists a $c_\gep>0$ such that  for $L\in \N$,
\be{pu}
\bP_{\beta}(v_L\geq c_\gep \, L^{1/3})\leq \gep.
\ee
\el
\begin{proof}
Since under $\bP_\beta$ only the first excursion has law $\bP_{\beta,0}$ and the others $\bP_{\beta,\mu_\beta}$ the proof of 
\eqref{pu} will be complete once we show for instance that for $c$ large enough and $L\in \N$
\be{nof}
\bP_{\beta,\mu_\beta}(\max\{X_i, i\leq cL^{1/3}\} \leq L)\leq \gep.
\ee
We recall that, if, $(\Gamma_i)_{i=1}^{c L^{1/3}+1}$ are the partial sums of $(\gamma_i)_{i=1}^{c L^{1/3}+1}$ a sequence of IID  exponential random variables 
with parameter $1$, we can state that 
\be{equp}
\max\{X_i, i\leq cL^{1/3}\}=_{\text{law}}\  F^{-1}\big(\ \tilde \Gamma_{cL^{1/3}}/\Gamma_{cL^{1/3}+1}\big).
\ee
with $\tilde \Gamma_{cL^{1/3}}=\gamma_2+\dots+\gamma_{cL^{1/3}+1}$. Thus we can rewrite the l.h.s. in \eqref{nof} as
\be{nof2}
\bP\bigg(F^{-1}\bigg(\frac{\tilde \Gamma_{cL^{1/3}}}{\Gamma_{cL^{1/3}+1}}\bigg) \leq L\bigg)=P\big(F^{-1}( D_L) (1-D_L)^3 \leq L (1-D_L)^3\big),
\ee
with $D_L=\tilde \Gamma_{cL^{1/3}}/\Gamma_{cL^{1/3}+1}$. After some easy simplifications we rewrite \eqref{nof2} as 
\be{nof3}
\bP\bigg(\gamma_1\geq  \big[ F^{-1}\big(D_L)]^{\frac13} (1-D_L) \  \frac{ \Gamma_{cL^{1/3}+1}}{L^{\frac13}} \bigg),
\ee
and then we use Lemma \ref{fm} to claim that,  as $L\to \infty$, the r.h.s. in \eqref{nof3} converges to $P(\gamma_1\geq c\,  C^{1/3})$. This completes the proof of 
the lemma.


\end{proof}

\bl{rem}
For every $\beta>0$, there exists a $M>0$ such that, for all function $G: \cP(\{0,\dots,L/2\})\to \R^+$, we have
\be{bounddd}
\bE_\beta\Big[ G\big(\mathfrak X\cap \big[0,\tfrac{L}{2}\big]\big)\,  \big|\, L\in \mathfrak{X}\Big] \leq M\   \bE_\beta\Big[ G\big(\mathfrak{X}\cap \big[0,\tfrac{L}{2}\big]\big)\Big]
\ee
\el

\begin{proof}
We compute the Radon Nikodym density of the image measure of $\bP_\beta(\cdot | L\in \mathfrak{X})$ by $\mathfrak{X}\cap[0,L/2]$ 
w.r.t. its counterpart without the $\{L\in \mathfrak{X}\}$ conditioning. Thus,  for $0=y_0< y_1<\dots<y_m\leq L/2$ 
we obtain
$$\frac{\bP_\beta(\tau\cap [0,\frac{L}{2}]=(y_0,\dots,y_m) | L\in \mathfrak{X})}{\bP_\beta(\tau\cap [0,\frac{L}{2}]=(y_0,\dots,y_m) )}:=G_L(y_m)+K_L(y_m)$$
with 
\begin{align}\label{deff}
\nonumber G_L(y)&= \frac{\sum_{n=0}^{L/4} P_{\beta,\mu_\beta}(n\in \mathfrak{X})\ \ P_{\beta,\mu_\beta}(X=L-n-y)}{P_{\beta}(L\in \mathfrak{X}) \ 
P_{\beta,\mu_\beta}(X\geq \frac{L}{2}-y)},\\
K_L(y)&= \frac{\sum_{n=L/4}^{L/2} P_{\beta,\mu_\beta}(n\in \mathfrak{X})\ \ P_{\beta,\mu_\beta}(X=L-n-y)}{P_{\beta}(L\in \mathfrak{X}) \ 
P_{\beta,\mu_\beta}(X\geq \frac{L}{2}-y)}.
\end{align}
Note that, for $y=0$, the terms $P_{\beta,\mu_\beta}(X=L-n-y)$ and $P_{\beta,\mu_\beta}(X\geq \frac{L}{2}-y)$ in the expression of $G_L(y)$ and $K_L(y)$ 
should be replaced by $P_{\beta,0}(X=L-n-y)$ and $P_{\beta,0}(X\geq \frac{L}{2}-y)$, respectively. However, this does not change anything in the sequel and this is why we will focus on 
$y>0$. It remains to prove that $G_L(y)$ and $K_L(y)$ are bounded above uniformly in $L\in \N$ and
$y\in \{0,\dots,L/2\}$. We will focus on the $G_L$ function since $K_L$ can be treated similarly.

The constants $c_1,\dots,c_4$ below are strictly positive and independent of $L,n,y$.
By recalling \eqref{eq:asympatauplustau} and since $L-n-y\geq L/4$ when $n\in \{0,\dots,L/4\}$ we can claim that in the numerator of $G_L(y)$, the term
$P_{\beta,\mu_\beta}(X=L-n-y)$ is bounded above by $c_1/L^{4/3}$  independently of $n$ while \eqref{attain}  tells us that  $\sum_{n=0}^{L/4} P_{\beta,\mu_\beta}(n\in \mathfrak{X})
\leq c_2 L^{1/3}$. Let us now deal with the denominator :  \eqref{attain}  tell us that $P_{\beta,\mu_\beta}(L\in \mathfrak{X})\geq  c_3/L^{2/3}$ while \eqref{eq:asympatauplustau}  gives
$$P_{\beta,\mu_\beta}(X\geq \tfrac{L}{2}-y)\geq P_{\beta,\mu_\beta}(X\geq \tfrac{L}{4})\geq \tfrac{c_4}{L^{1/3}}.$$  As a consequence, $G_L(y)$ is bounded above uniformly in $L\in \N$ and
$y\in \{0,\dots,L/2\}$.


\end{proof}

We resume the proof of  Proposition  \ref{1}.

\subsubsection{Proof of {\bf Step 1}  (\ref{liml})}

The proof of Step 1 will be complete once we show that 
\be{impp2}
\lim_{l \to \infty} \left(  \Big(\frac{X^1}{L}\Big)^{\frac23},\dots,\Big(\frac{X^k}{L}\Big)^{\frac23},  \frac{\mathfrak{N}^1}{(X^1)^{\frac23}},\dots,
\frac{\mathfrak{N}^k}{(X^k)^{\frac23}}\right)\,=_{\text{Law}} (\mathfrak{U}^1,\dots,\mathfrak{U}^k,Y_1,\dots,Y_k).
\ee
To obtain this convergence in law, we consider $g_1,\dots,g_k$ that are real Borel and bounded functions. We consider also $t\in \N$ and $(x_i)_{i=1}^t$ a sequence of strictly positive integers satisfying   $x_1+\dots+x_t=L$ with an order statistics $x_{r_1}\geq\dots\geq x_{r_t}$. The key observation is that, by independence of the $(\mathfrak{N}_i,X_i)_{i\in \N}$, we have
\begin{align}\label{keyo}
\bE_\beta \bigg[ \prod_{j=1}^k g_j\Big(\tfrac{\mathfrak{N}^j}{(X^j)^{\frac23}}\Big)\, & \Big |\,  X_i=x_i,\, 1\leq i\leq t \bigg]=\prod_{j=1}^k \bE_{\beta,\, 0\, \ind_{\{r_j=1\}}+\mu_\beta\, \ind_{\{r_j>1\}}} \bigg[g_j\Big(\tfrac{\mathfrak{N}}{X^{\frac23}}\Big) \, \Big |\, X=x_{r_j} \bigg].
\end{align}
We consider $f_1,\dots,f_k,g_1,\dots,g_k$ that are real Borelian and bounded functions  and we use \eqref{keyo} to observe that 
\begin{align}\label{keyoo}
 \bE_\beta\bigg[ \prod_{j=1}^k &f_j\Big(\big[\tfrac{X^j}{L}\big]^{\frac23}\Big)\    g_j\Big(\tfrac{\mathfrak{N}^j}{(X^j)^{\frac23}}\Big)\, \Big | \, L\in \mathfrak{X} \bigg]\\
\nonumber & = \bE_\beta\bigg[\prod_{j=1}^k f_j\Big(\big[\tfrac{X^j}{L}\big]^{\frac23}\Big)\    \bE_{\beta,\, 0 \,\ind_{\{r_j=1\}}+\mu_\beta \,\ind_{\{r_j>1\}}} \Big[g_j\Big(\tfrac{\mathfrak{N}}{X^{\frac23}}\Big) \Big |\, X=X^j\Big]\, \Big | \, L\in \mathfrak{X} \bigg].
 \end{align}
Because of Lemma \ref{tes}, we can assert that, under $\bP_\beta(\cdot | L\in \mathfrak X)$ the random set $(\mathfrak{X}/L)\cap [0,1]$ converges in law towards $\mathfrak{U}$, i.e., 
 the $1/3$ regenerative set on $[0,1]$ conditioned on $1\in \mathfrak{U}$ (the latter convergence is  proven e.g. in \cite{CSZ}, Proposition A.8). As a consequence $\big(\big(\frac{X^1}{L}\big),\dots,\big(\frac{X^k}{L}\big)\big)$
converges in law towards  $(\mathfrak{U}^1,\dots,\mathfrak{U}^k)$ which implies that $X^1,\dots,X^k$ tend to $\infty$ in probability and therefore
we can use Lemma \ref{col} to show that the l.h.s. in \eqref{keyoo} tends to (as $L\to \infty$)
\begin{align}\label{keyooo}
\prod_{j=1}^k \bE\big[g_j(Y)\big]  \bE\Big[\prod_{j=1}^k f_j(\mathfrak{U}^j)\Big].
 \end{align}
Thus, the proof of Step 1 is complete.

\subsubsection{Proof of {\bf Step 2} (\ref{lims})}


Under 
$\bP_\beta (\cdot |L\in \mathfrak{X})$, the reversibility yields that 
$$(\mathfrak{N}_i,\mathfrak{A}_i,X_i)_{i=1}^{v_L}=_{\text{law}} (\mathfrak{N}_{1+v_L-i},\mathfrak{A}_{1+v_L-i},X_{1+v_L-i})_{i=1}^{v_L}.$$
We set $v_{L/2}:=\max \{i\geq 1\colon\, S_i\leq L/2\}$ and $v'_{L/2}:=v_L-\min \{i\geq 1\colon\, S_i\geq L/2\}$ such that $v_{L/2}$ and 
$v'_{L/2}$ have the same law and 
\be{thk}
(\mathfrak{N}_i,\mathfrak{A}_i,X_i)_{i=1}^{v_{L/2}}=_{\text{law}} (\mathfrak{N}_{1+v_L-i},\mathfrak{A}_{1+v_L-i},X_{1+v_L-i})_{i=1}^{v'_{L/2}}.
\ee
We also denote by $(\mathfrak{N}_{\text{mid}},\mathfrak{A}_{\text{mid}},X_{\text{mid}})$ the features of the excursion containing $L/2$ in case 
$\tau_{v_{L/2}}<L/2$. In case $\tau_{v_{L/2}}=L/2$, we set  $(\mathfrak{N}_{\text{mid}},\mathfrak{A}_{\text{mid}},X_{\text{mid}})=(0,0,0)$.

By applying Lemma \ref{vl} 
 we can state that by choosing $c$ large enough, the quantity 
$\bP_\beta(v_{L/2}, v'_{L/2}\leq cL^{1/3}| L\in \mathfrak{X})$ is arbitrary close to $1$ uniformly in $L$ provided $c$ is chosen large enough. 
Thus, we set 
$$H_{c,L}= \{L\in \mathfrak{X}\} \cap \{v_L,v'_L\leq cL^{1/3}\}$$ 
and Step 2 will be complete once we show that, for each $c>0$ and $\gep>0$.
\be{lims2}
\lim_{k\to \infty} \limsup_{L\to \infty} \bP_\beta (B_{k,L} \geq \gep | H_{c,L})=0.
\ee
By Markov's inequality    Step 2 is a consequence of 
\be{lims1}
\lim_{k\to \infty} \limsup_{L\to \infty} \bE_\beta (B_{k,L}  | H_{c,L})=0.
\ee
We recall \eqref{supra} and we compute $\bE_\beta (B_{k,L}  | H_{c,L})$ by conditioning  on  $\sigma(X_i,i\in \N)$ as we did in \eqref{keyo}.  We recall that
$H_{c,L}$ is $\sigma(X_i,i\in \N)$-measurable  and that under $\bP_\beta$, the first excursion has law $\bP_{\beta,0}$ and the others $\bP_{\beta,\mu_\beta}$, i.e.,
\be{compu}
\bE_\beta (B_{k,L} | H_{c,L})= \bE_{\beta} \Bigg(\sum_{i\geq k+1}^{v_L}  \bE_{\beta,\, 0\,\ind_{\{r_i=1\}}+\mu_\beta \,\ind_{\{r_i>1\}}} \bigg[\frac{\mathfrak{N}}{X^{\frac23}}\Big | X=Xi\bigg]\, \bigg(\frac{X^i}{L}\bigg)^{\frac23}\bigg |\  H_{c,L}\Bigg),
\ee
but then, we can use Lemma \ref{col} which allows us to bound by $M>0$ each term $\bE_{\beta,a}\big[\mathfrak{N}/X^{\frac23}\big | X=X_i\big]$ with $a\in \{0,\mu_\beta\}$. By using again  the fact there exists an $\eta>0$ such that  $\bP_\beta(H_{c,L} | L\in \mathfrak X)\geq \eta$ uniformly in $L$ we can claim that  the proof will be complete once we show that 
\be{lims3}
\lim_{k\to \infty} \limsup_{L\to \infty} \bE_\beta \bigg(\sum_{i\geq k+1}^{v_L}  \Big(\frac{X^i}{L}\Big)^{\frac23}\ \ind_{\{v_{L/2}\leq cL^{1/3}\}} \ind_{\{v'_{L/2}\leq cL^{1/3}\}} \Big | L\in \mathfrak{X} \bigg)=0.
\ee
We note that, under $\bP_\beta(\cdot | L\in \mathfrak{X})$ we have necessarily $X^i\leq L/k$ for $i\geq k+1$. 
For simplicity we assume that $k\in 2\N$ and we denote by  $(\tilde X^1,\dots,\tilde X^{v_{L/2}})$ the order statistics of the variables $(X_1,\dots,X_{v_{L/2}})$ and by 
$(\bar X^1,\dots,\bar X^{v'_{L/2}})$ the order statistics of the variables $(X_{v_L}, X_{v_L-1},\dots,X_{1+v_L-v'_{L/2}} )$. Then,  we can easily note that 
$\sum_{i=1}^k X^i\geq \sum_{i=1}^{k/2} \tilde X^i+ \bar X^i$ so that the expectation in \eqref{lims3} is bounded above by
\begin{align}\label{decoup}
 \textstyle{ 2 \bE_\beta\big( \sum_{i=k/2+1}^{v_{L/2}}} & \textstyle{ (\frac{\tilde X^i}{L})^{\frac23}   \ind_{\{v_{L/2}\leq cL^{1/3}\}}\big| L\in \mathfrak{X} )}+{\textstyle  \bE_\beta\big( \big(\frac{X_{\text{mid}}}{L}\big)^{\frac23} \, \ind_{\{X_{\text{mid}}\leq L/k\}}\big| L\in \mathfrak{X})},
\end{align}
where the factor 2 in front of the first term is a direct consequence
of \eqref{thk}. The second term  in \eqref{decoup} is clearly bounded by  $(1/k)^{2/3}$ and therefore, it can be omitted.
As a consequence, it suffices to show that 
\be{lims4}
\lim_{k\to \infty} \limsup_{L\to \infty} \bE_\beta\bigg( \sum_{i=k}^{v_{L/2}}  \Big(\frac{\tilde X^i}{L}\Big)^{\frac23}   \ind_{\{v_{L/2}\leq cL^{1/3}\}}\Big| L\in \mathfrak{X} \bigg)=0.
\ee
At this stage, we note that $\sum_{i=k}^{v_{L/2}}  (\frac{\tilde X^i}{L})^{\frac23}   \ind_{\{v_{L/2}\leq cL^{1/3}\}}$ only depends on the random set of points 
$\mathfrak{X}\cap[0,L/2]$ and this allows us to use lemma \ref{rem} to claim that proving \eqref{lims4} without the conditioning by $\{L\in \mathfrak{X}\}$ is sufficient. Therefore,
we only need to estimate the quantity
$$\bE_\beta\bigg( \sum_{i=k}^{cL^{1/3}}  \Big(\frac{X^i}{L}\Big)^{\frac23}\bigg),$$
where $(X^1,\dots,X^{cL^{1/3}})$ is the order statistics of  $(X_1,\dots,X_{cL^{1/3}})$ under $\bP_\beta$ without any conditioning.
We recall that, if $(\Gamma_i)_{i=1}^{c L^{1/3}+1}$ are the partial sums of $(\gamma_i)_{i=1}^{c L^{1/3}+1}$ a sequence of IID  exponential random variables 
with parameter $1$, we can state that 
\be{equ}
(X^i)_{i=1}^{c L^{1/3}}=_{\text{law}} \bigg( F^{-1}\bigg[\frac{\Gamma_{cL^{1/3}+1-i}}{\Gamma_{cL^{1/3}+1}}\bigg]\bigg)_{i=1}^{c L^{1/3}}.
\ee
Moreover, by Lemma \ref{fm}, we can claim that there exists a $M>0$ such that $F^{-1}(u)\leq M/(1-u)^3$ for all $u\in (0,1)$ and consequently
\be{finm}
\bE_\beta\bigg( \sum_{i=k}^{cL^{1/3}}  \Big(\frac{X^i}{L}\Big)^{\frac23}\Big)\leq \frac{M^2}{L^{2/3}}  \sum_{i=k}^{cL^{1/3}} \bE\bigg(\bigg[\frac{\Gamma_{cL^{1/3}+1}}{\Gamma_{i}}\bigg]^2\bigg)
\ee
but then we can bound from above the general term in the sum of the r.h.s. in \eqref{finm} by 
\begin{align}\label{finm2}
\bE\bigg(\left[\frac{\Gamma_{cL^{1/3}+1}}{\Gamma_{i}}\right]^2\bigg)&= c^2 L^{\frac23} \bE\bigg(\left[\frac{\Gamma_{cL^{1/3}+1}}{cL^{1/3}}\right]^2 
\left[\frac{1}{\Gamma_{i}}\right]^2\bigg)\leq c^2 L^{\frac23} \bE\bigg(\left[\frac{\Gamma_{cL^{1/3}+1}}{cL^{1/3}}\right]^4 \bigg)^\undemi
\bE\bigg(\left[\frac{1}{\Gamma_{i}}\right]^4 \bigg)^\undemi.
\end{align}
It remains to point out, on the one hand, that $\bE_\beta\Big(\Big[\frac{\Gamma_{cL^{1/3}+1}}{cL^{1/3}}\Big]^4 \Big)$ converges to $1$ as $L\to \infty$ and, 
on the other hand, that for all $i\in \N\setminus \{0\}$, $\Gamma_i$ follows a law Gamma of parameter $(i,1)$ which entails that for $i\geq 5$, 
$\bE\Big(\Big[\frac{1}{\Gamma_{i}}\Big]^4 \Big)=\frac{(i-5)!}{(i-1)!} $. Consequently, we can use \eqref{finm} and \eqref{finm2}
to complete the proof of the step.

\subsubsection{Proof of Claim \ref{2}}

We use again the  random walk representation \eqref{tgh}, and since 
$\Gamma_\beta=1$, we obtain 
\begin{align}\label{lois}
P_{L,\beta}\Big(\frac{N_l}{L^{2/3}}\leq t\Big) &=\frac{c_\beta}{ \tilde{Z}_{L,\beta}} \sum_{N=1}^{t L^{2/3}}
\probbeta{G_N=L-N,V_{N+1}=0}.
\end{align}
Similarly to what we did in (\ref{parffu}--\ref{TTF}), we partition the event  $\{G_N=L-N,V_{N+1}=0\}$ depending on the length $r$
on which the random walk sticks at the origin before its right extremity, that is
\begin{align}
P_{L,\beta}\Big(\frac{N_l}{L^{2/3}}\leq t\Big) &= \sum_{r=0}^{tL^{2/3}-1}
\frac{(1-e^{-\beta/2}) }{ c_\beta^{r-1}\tilde{Z}_{L,\beta}}   \sum_{N=1}^{t L^{2/3}-r}
\probbeta{G_{N}=L-N-r, V_N\neq 0, V_N V_{N+1}\leq 0} \\
&= \sum_{r=0}^{tL^{2/3}-1}
\xi_{r,L}
\probbeta{\frac{r+\mathfrak{N}_1+\dots+\mathfrak{N}_{v_{L-r+1}}}{L^{2/3}} \leq t \, |\, L-r+1\in \mathfrak{X}},
\end{align}
where we recall the definition of $\xi_{r,L}$ in \eqref{defalp}. This ends the proof of Claim
\ref{2}.

\subsection{Identifying the distribution of $\lim_{L\to +\infty}
  \frac{N_l}{L^{2/3}}$ }\label{secder}
  Let $B$ be a standard Brownian motion on the line ; we consider its
  geometric area and its continuous inverse
$$ G_t(B) = \int_0^t \valabs{B_s}\, ds\,,\quad g_a=\inf\ens{t>0 :
  G_t(B) =a}\,.$$
We aim to identify \emph{formally} the distribution of $\lim_{L\to +\infty}
  \frac{N_l}{L^{2/3}}=_{law}\sum_{i=1}^{+\infty} Y_i
  \mathfrak{U}_i ^{2/3}$ with the distribution of $g_1$ conditionnally
  on $B_{g_1}=0$.
\subsubsection{Step 1 : Identifying the distribution of $Y_1$}. We
shall show that $Y_1$ is distributed as the extension of a Brownian
excursion normalized by its area. More precisely on the space of
excursions $(U_\delta,\cU_\delta)$ (see Revuz and Yor \cite[Chapter
XII, Section 2]{MR1083357}) we consider the extension (duration) and
area
$$ \zeta(w) := \inf\ens{t>0 : w(t) =0}\,,\quad A(w) :=
\int_0^{+\infty} w(s)\, ds\,,$$
and the scaling operator
$$ s_c(w)(t) = \unsur{\sqrt{c}}w(ct)\qquad( c>0)\,.$$

The operator that normalizes the area is 
$ \eta(w) := s_{A(w)^{2/3}}(w)$. It satisfies $A(\eta(w))=1$ and there
exists a probability $\gamma_A$ defined on $\ens{w\in U_\delta :
  A(w)=1}$, called \emph{the law of the Brownian excursion normalized by its
area}, that satisfies (with $n_+$ denoting the Itô measure of positive
excursions) for every positive measurable $F,\psi$:
$$ n_+\etc{F(\eta(w)) \psi(A(w))} = \gamma_A(F(w)) \eta_+(\psi(A(w)))\,.$$

It is now just a matter of playing with Brownian scaling, and with the
characterization of the normalized Brownian excursion (see e.g. \cite[Chapter
XII, Exercise 2.13]{MR1083357}) to show that $Y_1$ has distribution $\zeta(w)$
with $w$ sampled from $\gamma_A$.

\subsubsection{Step 2: a Brownian construction of a $\unsur{3}$-stable
  regenerative set}
Observe that if $(\tau_t, t\ge 0)$ is the inverse local time at level
$0$ of Brownian motion $B$, then by strong Markov property
$(G_{\tau_t}, t\ge 0)$ is a subordinator. Since it has the scaling
$(G_{\tau_{ct}}, t\ge 0) =_{law} (c^3 G_{\tau_t}, t\ge 0)$, its range $\cR$
is a stable $\unsur{3}$ regenerative set on $[0,+\infty[$. Therefore if
$(\mathfrak{U}_i)_{i=1}^{+\infty}$ are the interarrivals of
$\taufrak=\cR\cap\etc{0,1}$, we have the representation
$$\ens{\mathfrak{U}_i,1\le i} = \ens{G_{\tau_s} -G_{\tau_{s-}} : s>0,
  G_{\tau_s}\le 1}$$ and thus $\sum_{i=1}^{+\infty} Y_i
  \mathfrak{U}_i ^{2/3}$ is, in distribution,  the sum of all the extensions (durations)
  of the Brownian excursions, summed up until their cumulated areas do
  not exceed $1$, that is 
$$ \sum_{i=1}^{+\infty} Y_i
  \mathfrak{U}_i ^{2/3} =_{law} \sum_{s : G_{\tau_s}\le 1} \tau_s -\tau_{s-}
  = g_1\,.$$

\subsubsection{Step 3: a formal conditioning} 
We now have to take into account the conditioning. The
$(\mathfrak{U}_i)_{i=1}^{+\infty}$ are the interarrivals of
$\taufrak=\cR\cap\etc{0,1}$, conditionnaly on $1\in\taufrak$ that is 

$$\ens{\mathfrak{U}_i,1\le i} =_{law} \ens{G_{\tau_s} -G_{\tau_{s-}} : s>0,
  G_{\tau_s}\le 1} \quad \text{conditionnaly on $\ens{\exists s :
    G_{\tau_s}=1}$}$$

Since by definition of $g$ we have $\ens{\exists s :
    G_{\tau_s}=1} =\ens{B_{g_1}=0}$ we can conclude that 
$$ \sum_{i=1}^{+\infty} Y_i
  \mathfrak{U}_i ^{2/3} =_{law} g_1\, \quad\text{conditionnaly on
    $\ens{B_{g_1}=0}$}
$$
Let us explain why this conditioning is only formal. The sets on which
we condition are of zero probability measure and thus the law of
$\taufrak=\cR\cap\etc{0,1}$ conditioned by   $1\in\taufrak$ is defined
in \cite{CSZ} through regular conditional distributions (formulas
(1.19) and (1.20)).\footnote{There are well known  examples of negligible sets $A$
limits of two different sequences $A_n$ and $B_n$ of non negligible
sets, and which lead to two different limiting probabilities $\lim
\prob{. \mid A_n} \neq \lim \prob{. \mid B_n}$ by considering
different regular conditional probability measures}
\section{Appendix}\label{sec:appendice}

\subsection{Perfect simulation procedure}
We shall use the acceptance-reject algorithm. Let $X,X_1, \ldots, X_n$
be IID with values in $E$, let $A$ be a measurable set of $E$ such
$\prob{X \in A}>0$, and
$$ T := \inf\ens{ n \ge 1 : X_n \in A}\,.$$
Then $T$ has a geometric distribution of parameter $\prob{X\in A}$,
$X_T$ is independent of $T$, with distribution the conditional law
$$ \prob{X_T \in B} = \prob{X\in B \mid X \in A}\,.$$

For $\beta=\beta_c$, we have $\Gamma_\beta=1$ so the representation
formula\eqref{partfun} yields
$$ Z_{L,\beta} = e^{\beta L} \sum_{N=1}^L \probbeta{V \in \cV_{N,L-N}}
= e^{\beta L} \probbeta{V \in A}$$
with $A=\ens{ V : \exists N : G_N(V) = L-N, V_{N+1}=0}$.

Following the same steps as in Section \ref{sec:rep}
  we show that if
$B$ is a set of trajectories in $\Omega_L$ such that $\ens{l\in B,
  L_l(L)=N} = \ens{V\in C, V_{N+1}=0,G_N(V) = L-N}$ we have
\begin{align}
  P_{L,\beta}\etp{l \in B} = \unsur{Z_{L,\beta}} e^{\beta L}
  \sum_{N=1}^L \probbeta{V \in \cV_{N,L-N}, V \in C} = \probbeta{V \in C
    \mid V \in A}\,.
\end{align}
Therefore we can use an acceptance-reject to simulate a trajectory of
an IPDSAW under $P_{L,\beta}$ (for $\beta=\beta_c$). The mean number
of rejects for an acceptance is the mean of the geometric r.v. that
is, thanks to Theorem~\ref{pfa},
$$\unsur{P_{L,\beta}\etp{V\in A}} = \unsur{\tilde{Z}_{L,\beta}} \sim
\unsur{c} L^{2/3}$$
In a nutshell, we have a perfect simulation algorithm with complexity
$L^{2/3}$.

\medskip
Of course, one can try to use the same trick for $\beta \neq \beta_c$,
say $\beta>\beta_c$ so that $\Gamma_\beta<1$ We let $S$ be an
independent geometric r.v. of parameter $1-\Gamma_\beta$, and add a
cemetery point $\delta$ for trajectories : $S$ is now a lifetime
so that $V_N=\delta$ if $N\ge S$. Under this new probability
$\bar{\bP}_{L,\beta}$ we have
$$ P_{L,\beta}(B) = \bar{\bP}_{L,\beta}(V \in C \mid V \in A)$$
 and we have a perfect simulation algorithm. The problem is that now
 the mean number of acceptance for a reject is growing very fast with
 $L$ :
$$ \unsur{\tilde{Z}_{L,\beta}} \ge C \unsur{L^{\kappa}} e^{c
  \sqrt{L}}.$$

\subsection{Proof of \eqref{eq:equivptauegaln}}\label{lltt}
Since by definition  
\begin{align}
\tau&= \inf\ens{i\geq 1\colon\,  V_{i-1}\neq 0 \text{ and }
V_{i-1}V_i \le 0},\\
\nonumber \tilde \tau&= \inf\ens{i\geq 1\colon\,  V_{i}< 0},
\end{align}
we can claim that $V_0>0$ implies $\tau=\tilde \tau$.
Thus, we recall \eqref{defubeta} and can write 
\be{calcons}
\probmubeta{\tau=n}=\mu_\beta(0)\,  \probbeta{\tau=n}+2 \sum_{x=1}^\infty \mu_\beta(x) \, 
\bP_{\beta,x} (\tilde \tau=n).
\ee
By disintegrating the event $\{\tilde \tau=n+1\}$ with respect to the value taken by $V_1$ and by recalling that 
for $x>1$ we have $\bP_\beta(V_1=x)=2 \mu_\beta(x)/ c_\beta (1-e^{-\beta/2})$ we obtain that 
\be{eq:disi}
\bP_{\beta}(\tilde \tau=n+1)=\frac{2}{1+e^{-\beta/2}} \, \sum_{x=1}^\infty \mu_\beta(x) \, 
\bP_{\beta,x} (\tilde \tau=n).
\ee
Moreover, under $\bP_{\beta}$ we can disintegrate the event $\{\tau=n\}$ with respect to the time  $k$
during which $V$ sticks to $0$ before leaving it, i.e.,
\begin{align}\label{eq:disi2}
\probbeta{\tau=n}&=\, 2 \sum_{k=0}^{n-2} \bP_\beta( V_1=\dots=V_k=0,\,  V_{k+1}>0, \dots, V_{n-1}>0, V_n\leq 0),\\
\nonumber &=\, 2 \sum_{k=0}^{n-2} \frac{1}{(c_{\beta})^k} \, \bP_\beta( \tilde \tau= n-k).
\end{align}

At this stage, we recall that by
Theorem 8 of
Kesten~\cite{Kes63} we have that $\probbeta{\tilde \tau=n} \sim C n^{-3/2}\quad \text{with}\quad C=(\espbeta{V_1^2}/2 \pi)^{1/2}$. 
Then,  it remains to recall that $\mu_\beta(0)=1-e^{-\beta/2}$ and to put (\ref{calcons}--\ref{eq:disi2}) together 
 to obtain 
\begin{align}\label{eq:disi3}
\probmubeta{\tau=n}\sim \frac{C}{n^{3/2}} \, \Big[ 2 (1-e^{-\beta/2}) \frac{c_\beta}{c_\beta -1} + 1+e^{-\beta/2}\Big],
\end{align}
which after a straightforward computation gives  us 
\be{fina}
\probmubeta{\tau=n}\sim (1+e^{\beta/2}) \frac{C}{n^{3/2}}.
\ee

\bibliographystyle{amsplain}
\bibliography{cnp}

\end{document}